\numberwithin{equation}{section}
\newtheorem{theorem}{Theorem}[section]
\newtheorem{lemma}[theorem]{Lemma}
\newtheorem{corollary}[theorem]{Corollary}
\newtheorem{definition}[theorem]{Definition}
\newtheorem{rem}{Remark}
\theoremstyle{remark}
\def\begeq{\begin{equation}}
\def\endeq{\end{equation}}
\title[Gradient estimates for $\Delta_pv+a(v+b)^q=0$]{Liouville theorems and new gradient estimates for positive solutions to $\Delta_pv+a(v+b)^q=0$ on a complete manifold}
\author{Youde Wang$^\S$}
\thanks{$\S\,$
1. School of Mathematics and Information Science, Guangzhou University, Guangzhou 510006, P. R. China; 2. Hua Loo-Keng Key Laboratory Mathematics, Institute of Mathematics, Academy of Mathematics and Systems Science, Chinese Academy of Sciences, Beijing 100190, China; 3. School of Mathematical Sciences, University of Chinese Academy of Sciences, Beijing 100049, China.
Email: wyd@math.ac.cn}
\author{Liqin Zhang$^\dag$*}
\thanks{$\ddag\,$
School of Mathematics and Information Science,
Guangzhou University,
Guangzhou 510006, P. R. China.
Email: 1061837643@qq.com\\
*Corresponding author}
\begin{document}
\maketitle
\rm\begin{abstract}\vspace{-2.5em}
{In this paper, we use the Saloff-Coste Sobolev inequality and Nash-Moser iteration method to study the local and global behaviors of positive solutions to the nonlinear elliptic equation $\Delta_pv+a(v+b)^q=0$ defined on a complete Riemannian manifold $\left(M,g\right)$ with Ricci lower bound, where $p>1$ is a constant and $\Delta_pv=\mathrm{div}\left(\left|\nabla v\right|^{p-2}\nabla v\right)$ is the usual $p$-Laplace operator. Under certain assumptions on $a$, $p$ and $q$, we derive some gradient estimates and Liouville type theorems for positive solutions to the above equation. In particular, under certain assumptions on $a$, $b$, $p$ and $q$ we show whether or not the exact Cheng-Yau $\log$-gradient estimates for the positive solutions to $\Delta_pv+av^q=0$ on $\left(M,g\right)$ with Ricci lower bound hold true is equivalent to whether or not the positive solutions to this equation fulfill Harnack inequality, and hence some new Cheng-Yau $\log$-gradient estimates are established.}\\

\noindent{\emph{Key words: gradient estimate; Nash-Moser iteration; Liouville type theorem}}
\end{abstract}

\textbf{\tableofcontents}

\section{\textbf{Introduction}}
Gradient estimate is a fundamental technique in the study of partial differential equations on a Riemannian manifold. Indeed, one can use gradient estimate to deduce Liouville type theorems (\cite{1,2,3,4,5,6}), to derive Harnack inequalities (\cite{3, 4}), to infer local and global behavior for solutions, to study the geometry of manifolds (\cite{CM1, 7, 4, 9}), etc.

On the other hand, it is well-known that the Liouville theorem has had a huge impact across many fields, such as complex analysis, partial differential equations, geometry, probability, discrete mathematics and complex and algebraic geometry. The impact of the Liouville theorem has been even larger as the starting point of many further developments. For more details on the Liouville properties of harmonic functions and some related theory of function on a manifold we refer to an expository paper \cite{CM} written by T. H. Colding (see also \cite{CM1}).

In this paper, we are concerned with the following equation
\begin{align}\label{1}
\Delta_pv+a(v+b)^q=0
\end{align}
defined on a complete Riemannian manifold $\left(M,g\right)$ equipped with a metric $g$, where $p>1$, $b\ge0$, $a$, $q$ are constants and $$\Delta_pv=\mathrm{div}\left(\left|\nabla v\right|^{p-2}\nabla v\right)$$ is the usual $p$-Laplace operator. Besides its own interest, such nonlinearites associated with \eqref{1} are also important as they arise from a class of quasilinear problems with quadratic growth in the gradient. Indeed, it is well-known (cf. \cite{KK}) that the change of variables
$$ v =e^u - b$$
reduces the quasilinear equation
$$ \Delta u + |\nabla u|^2 = F(x,u)$$
to the semilinear equation
$$ \Delta v=(v+b)F(x,\ln(v+b)),$$
where $b$ is a constant. By picking
$$F(x,u)\equiv e^{(q-1)u}$$
in the above equation, we can see that this equation is just \eqref{1} with $p=2$. Also, this indicates that it is of interest to study the following equation
$$ \Delta_p v=(v+b)F(x,\ln(v+b)),$$
where $1<p<\infty$. In fact, the first named author of this paper and coauthors \cite{Wang-Z-Z} have ever studied a special case of this equation, i.e., the case $p=2$.
\medskip

For the sake of simplicity, in this paper we will focus on the equation \eqref{1} and try to establish some new gradient estimates on the positive solutions to this equation. Now we recall some relative results in the previous literature with the equation.

When $b=0$, the equation \eqref{1} reduces to the following
$$\Delta_pv+av^q=0.$$
In the case $M$ is an Euclidean space, this equation was studied by Serrin and Zou in \cite{James} and some Liouville theorems and universal estimates were established. Very recently, He, the first named author of this article and Wei \cite{HWW} adopted a new way to employ the Nash-Moser iteration to study the gradient estimates of this equation on a complete Riemannian manifold.

In particular, it is worthy to point out that for the case $p=2$ the following new estimate was obtained in \cite{PWW1}
\begin{align}\label{PWW1}
\begin{split}
\frac{\left\vert \nabla u \right\vert^{2}}{u^{2}}+au^{q-1}
\leq~& \frac{2n}{2-n\max\{0,q-1\}}\left(\frac{C_{1}^{2}(n-1)(1+\sqrt{\kappa}R)+C_{2}}{R^{2}}\right.\\
&\left.+ 2\kappa+\frac{2nC_{1}^{2}}{(2+n\max\{0,q-1\})R^{2}}\right),
\end{split}
\end{align}
if the Ricci curvature of domain manifold satisfies $\mathrm{Ric}_g\geq-(n-1)\kappa$ and $q<\frac{n+2}{n}$. Obviously, this is a stronger estimate than the logarithmic gradient estimate (also see \cite{HW}). Wang-Wei \cite{6} also derived Cheng-Yau type gradient estimates for positive solutions to $\Delta u + u^q=0$ under the assumption
$$q\in \left(-\infty,~\frac{n+1}{n-1}+\frac{2}{\sqrt{n(n-1)}}\right).$$
Shortly after, He, the first named author of this article and Wei \cite{HWW} extended the Cheng-Yau estimate to the range
$$q\in \left(-\infty,~\frac{n+3}{n-1}\right).$$
Very recently, Z. Lu extended the estimate \eqref{PWW1} in \cite{PWW1} to the value range $q\in \left(-\infty,~\frac{n+3}{n-1}\right)$.
\medskip

The first goal of this paper is to give the gradient estimates of positive solutions with positive lower bounds to $\Delta_pv+av^q=0$, which is different from the exact $\log$-gradient estimate. This is equivalent to studying the positive solutions to \eqref{1}.

As the second goal of this paper we also try to answer the following questions:

\begin{itemize}
\item {\it A natural problem is whether or not the above $\frac{n+3}{n-1}(p-1)$ is optimal for one to derive the exact Cheng-Yau estimates for a positive $C^1$ solution to $\Delta_pu+ u^q=0$ on a complete manifold with Ricci curvature bounded below?}

\item {\it One would like to know whether or not the exact Cheng-Yau estimate holds true if $u$ is a $C^1$ smooth positive solution, which satisfies the standard Harnack inequality, to $\Delta_pu+ u^q=0$?}
\end{itemize}

Inspired by \cite{HHW, HWW, WZq, WZq1}, in the present paper we shall use the Nash-Moser iteration method to study the gradient estimate and the Liouville property of the above equation \eqref{1}, defined on a complete Riemannian manifold.

\subsection{Main results}\

By a solution $v$ of \eqref{1} in an (arbitrary) domain $\Omega$, we mean a positive solution $v\in C^1(\Omega)\cap C^3(\widetilde{\Omega})$, where $\widetilde{\Omega}=\left\{x\in\Omega|~|\nabla v(x)|\neq 0 \right\}$. It is well-known that any solution of \eqref{1} satisfies $v\in C^{1,\alpha}(\Omega)$ for some $\alpha\in(0,~1)$ (for example, see \cite{16,17,18}). Moreover, $v$ is in fact smooth in $\widetilde{\Omega}$.

For the sake of simplicity, we define
\begin{align*}
h=\beta(p-1)\left[\frac{p-n}{(n-1)^2}\beta+\frac{2}{n-1}\right]
\end{align*}
and
\begin{align*}
\begin{split}
\phi_\beta=
\begin{cases}
\mathop{\sup}\limits_{B(x_0,R)}v+b,&0<\beta<2, \\[3mm]
1,&\beta=2,\\[3mm]
\mathop{\inf}\limits_{B(x_0,R)}v+b,&\beta>2.
\end{cases}
\end{split}
\end{align*}
Furthermore, we suppose that $\beta$ satisfies the following condition
\begin{align}\label{beta}
\begin{split}
\begin{cases}
\beta\in\left(0,~\frac{2(n-1)}{n-p}\right),&1<p<n,\\[3mm]
\beta\in(0,~+\infty),&p\ge n.
\end{cases}
\end{split}
\end{align}
Now, we state our main results.
\begin{theorem}\label{theorem1.1}
Let $p>1$, $b\ge0$ and $(M,g)$ be an $n$-dim $(n\ge3)$ complete manifold with $\mathrm{Ric}\ge-(n-1)\kappa$, where $\kappa$ is a non-negative constant. Assume $v$ is a positive solution to equation \eqref{1} on the geodesic ball $B(x_0,2R)\subset M$. If the constants $a$, $p$ and $q$ satisfy one of the following two conditions
\begin{align*}
\frac{\beta}{2}\cdot\frac{n+1}{n-1}(p-1)-h^{\frac{1}{2}}<q<\frac{\beta}{2}\cdot\frac{n+1}{n-1}(p-1)+h^{\frac{1}{2}}\quad (a\neq 0);
\end{align*}
and
\begin{align*}
a\left[\frac{\beta}{2}\cdot\frac{n+1}{n-1}(p-1)-q\right]\ge0,
\end{align*}
where $\beta$ is a constant satisfying \eqref{beta},
then there exists a positive constant ${\mathcal{C}}={\mathcal{C}}(n,p,q, \beta)$ such that the following estimate holds true
\begin{align}\label{forest}
\mathop{\sup}\limits_{B\left(x_0,\frac{R}{2}\right)}\frac{|\nabla v|^2}{(v+b)^\beta}\le\mathcal{C}\frac{1+\kappa R^2}{R^2}\phi_\beta^{2-\beta}.
\end{align}
\end{theorem}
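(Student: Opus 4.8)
The plan is to carry out a Bochner-type computation combined with Nash-Moser iteration, following the strategy indicated in the introduction. First I would reduce to the equation $\Delta_p v + a(v+b)^q = 0$ and introduce the substitution $w = (v+b)$ (or work with $v+b$ directly), setting $f = |\nabla w|^2 w^{-\beta}$ so that the desired conclusion becomes a local sup bound on $f$. Writing the $p$-Laplacian in the form $\Delta_p w = |\nabla w|^{p-2}(\Delta w + (p-2)\,\mathrm{Hess}\,w(\nabla w,\nabla w)/|\nabla w|^2)$ on the open set $\widetilde\Omega$ where $\nabla w \neq 0$, I would compute $\Delta_p f$ (or rather the linearized operator $\mathcal{L}$ naturally associated with the $p$-Laplacian, $\mathcal{L}(\cdot) = \mathrm{div}(|\nabla w|^{p-2}\nabla \cdot) + (p-2)\,\mathrm{div}(|\nabla w|^{p-4}\langle \nabla w,\nabla\cdot\rangle\nabla w)$) and apply the Bochner formula to $|\nabla w|^2$. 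The curvature term contributes $-(n-1)\kappa|\nabla w|^2$ via $\mathrm{Ric}\ge-(n-1)\kappa$, the Hessian term is handled by a refined Cauchy-Schwarz inequality (the Kato-type inequality splitting $|\mathrm{Hess}\,w|^2$ into a piece controlling $|\nabla|\nabla w||^2$ and the trace piece), and the term coming from differentiating the equation produces the factor involving $q$. The precise algebra is what forces the condition on $\beta$ in \eqref{beta} (to keep a dimensional constant positive) and the two sign/interval conditions on $q$ relative to $\tfrac{\beta}{2}\cdot\tfrac{n+1}{n-1}(p-1)$: exactly in those ranges the "bad" coefficient $h^{1/2}$ can be absorbed and one obtains a differential inequality of the form $\mathcal{L} f \ge \varepsilon_0 \phi_\beta^{\beta-2} f^2 - C(\kappa + \text{lower order}) f - (\text{gradient cross terms})$ with $\varepsilon_0 > 0$.

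Next I would localize: choose a cutoff $\psi = \eta(r(x)/R)$ with $\eta$ smooth, $\eta\equiv 1$ on $[0,1/2]$, supported in $[0,1]$, with $|\eta'|^2/\eta$ and $|\eta''|$ bounded, and $r$ the distance to $x_0$. Using the Laplacian comparison theorem, $\Delta r \le (n-1)(\tfrac1r + \sqrt\kappa)$, one controls $\mathcal{L}\psi$ by $C(1+\sqrt\kappa R)/R^2$ times appropriate weights. Then one tests the differential inequality for $f$ against powers of $\psi$ and $f$ and integrates by parts. Here is where the Saloff-Coste Sobolev inequality enters: under $\mathrm{Ric}\ge-(n-1)\kappa$ the manifold supports a (scale-invariant, with the $1+\sqrt\kappa R$ correction) $L^2$-Sobolev inequality on $B(x_0,R)$, which feeds the Nash-Moser iteration. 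I would run the iteration on the quantities $\int (\psi f)^{2\mu^k}$, $\mu = \tfrac{n}{n-2}$ (modified to account for the degenerate weight $|\nabla w|^{p-2}$ — this is the standard trick of treating $|\nabla w|^{p-2}$ as an $A_2$-type weight or absorbing it, as in \cite{HWW, HHW}), obtaining at the end $\sup_{B(x_0,R/2)} f \le C R^{-2}(1+\kappa R^2)\,\phi_\beta^{2-\beta}\cdot(\text{a bounded quantity})$, after also bounding the $L^1$ (or $L^2$) norm of $f$ over $B(x_0,R)$ by an integrated form of the same differential inequality (a Caccioppoli-type estimate).

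The main obstacle I anticipate is twofold. First, the degeneracy of the $p$-Laplacian: the natural operator $\mathcal{L}$ is only uniformly elliptic away from critical points of $v$, with ellipticity constants comparable to $|\nabla v|^{p-2}$, so one must either work on $\widetilde\Omega$ and argue that the critical set does not obstruct the maximum principle / integration by parts (using that $v\in C^{1,\alpha}$ globally and $v$ is smooth on $\widetilde\Omega$), or carry the weight $|\nabla v|^{p-2}$ through every step of the iteration; keeping track of the exponents of this weight so that the Sobolev inequality still closes the iteration is delicate and is really the technical heart of the argument. Second, the bookkeeping of constants in the Bochner inequality: getting the coefficient of $f^2$ to be strictly positive precisely on the stated $q$-interval requires a careful choice of the free parameters in the Cauchy-Schwarz/Young inequalities (the "$\varepsilon$-splitting"), optimized so that the discriminant condition becomes exactly $|q - \tfrac{\beta}{2}\tfrac{n+1}{n-1}(p-1)| < h^{1/2}$ — and checking that the alternative sign condition $a[\tfrac{\beta}{2}\tfrac{n+1}{n-1}(p-1) - q]\ge0$ also suffices (in that case the nonlinear term has a favorable sign and one does not need the positivity of the $f^2$ coefficient at all, only a one-sided bound). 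Once the pointwise differential inequality is pinned down, the localization and Nash-Moser steps are, while lengthy, essentially routine and parallel to \cite{HWW, HHW}.
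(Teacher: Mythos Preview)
Your outline is essentially the paper's own proof: auxiliary function $F=|\nabla u|^2/u^\beta$ with $u=v+b$, Bochner identity for the linearized operator $\mathcal{L}_p$, a pointwise differential inequality for $F$, Saloff--Coste Sobolev, then Nash--Moser. Two technical devices in the paper sharpen what you sketched and dissolve the obstacles you anticipate. First, rather than working with $\mathcal{L}_p(F)$ the paper computes $\mathcal{L}_p(F^\alpha)$ for a \emph{fixed large} $\alpha>1$; the extra $(\alpha-1)$-terms produce a positive quadratic $\langle\nabla F,\nabla u\rangle^2$ contribution that absorbs the $a$-dependent cross term by completing the square twice, and the interval condition on $q$ is exactly $\lim_{\alpha\to\infty}H>0$ for the resulting coefficient $H$. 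Second, the degeneracy worry disappears cleanly: the paper tests against $F_\varepsilon^t\eta^2\,u^{\lambda}$ with $\lambda=\beta(1-\tfrac{p}{2})$, which converts every mixed factor $f^{p/2-1}u^{\cdots}$ into a pure power of $F$, so no $A_2$-weight machinery is needed and the unweighted Saloff--Coste inequality applies directly to $F^{(\,t+\alpha+p/2-1)/2}\eta$.

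One correction to your reading of the sign-condition case: you \emph{do} still need the top-order coefficient (of $F^{\alpha+p/2}$, your ``$f^2$'') to be strictly positive there---it equals $\tfrac{\beta}{2}(p-1)\bigl(2+\tfrac{p-n}{n-1}\beta\bigr)$, and its positivity is precisely what the $\beta$-condition \eqref{beta} guarantees. What the sign hypothesis $a\bigl[\tfrac{\beta}{2}\tfrac{n+1}{n-1}(p-1)-q\bigr]\ge0$ buys is only that the $a$-dependent term $2a[\cdots]u^{q-2\beta-1}f^2$ can be dropped; without the positive top coefficient the initial $L^{\beta_1}$ bound (the step that produces the factor $(1+\kappa R^2)/R^2\cdot\phi_\beta^{2-\beta}$) would not close. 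Also, the $\phi_\beta^{\beta-2}$ in your pointwise inequality should be $u^{\beta-2}$; the replacement by $\phi_\beta^{\beta-2}$ happens only after integration, using the definition of $\phi_\beta$ as $\sup u$ or $\inf u$ according to the sign of $\beta-2$.
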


Here we would like to give a remark on the above theorem.
\begin{rem}
It is easy to see that, if $\beta=2$, there holds
$$\frac{\beta}{2}\cdot\frac{n+1}{n-1}(p-1)+h^{\frac{1}{2}}=\frac{n+3}{n-1}(p-1)$$
and
$$\frac{\beta}{2}\cdot\frac{n+1}{n-1}(p-1)-h^{\frac{1}{2}}=(p-1).$$
So, in the case $\beta=2$ and $b=0$, we can recover the conclusion of Theorem 1.1 in \cite{HWW}. On the other hand, from \eqref{forest} we can infer that there holds true
$$\mathop{\sup}\limits_{B\left(x_0,\frac{R}{2}\right)}|\nabla v|^2\le\mathcal{C}\frac{1+\kappa R^2}{R^2}\phi_\beta^{2}
= \mathcal{C}\frac{1+\kappa R^2}{R^2}(\mathop{\sup}\limits_{B(x_0,R)}v+b)^2,$$
if $p$ and $q$ satisfy the assumptions in the above theorem, where $\beta\in(0,\, 2)$.
\end{rem}
\medskip

For the sake of convenience, we define
\begin{align*}
\Psi(I)=\mathop{\sup}\limits_{\beta\in I}\left[\frac{\beta}{2}\cdot\frac{n+1}{n-1}(p-1)+h^{\frac{1}{2}}\right]
\end{align*}
and
\begin{align*}
    \Gamma(I)=\mathop{\inf}\limits_{\beta\in I}\left[\frac{\beta}{2}\cdot\frac{n+1}{n-1}(p-1)-h^{\frac{1}{2}}\right].
\end{align*}
If $2\in I$, obviously we have
$$\Psi(I)\geq \frac{n+3}{n-1}(p-1)\quad\quad\mbox{and}\quad\quad \Gamma(I)\leq (p-1).$$
So, we always have
$$\Psi\left(\left(0,~\frac{2(n-1)}{n-p}\right)\right)\geq \frac{n+3}{n-1}(p-1),$$
since $p>1$ and hence $2\in \left(0,~\frac{2(n-1)}{n-p}\right)$.
\medskip

By using the above notations and Theorem \ref{theorem1.1}, we can achieve the following conclusions.
\begin{corollary}\label{corollary1.2}
Let $p>1$, $b\ge0$ and $(M,g)$ be an $n$-dim $(n\ge3)$ complete manifold with $\mathrm{Ric}\ge-(n-1)\kappa$, where $\kappa$ is a non-negative constant. Assume $v$ is a positive solution to equation \eqref{1} on the geodesic ball $B(x_0,2R)\subset M$. If the constants $a$, $p$ and $q$ satisfy one of the following five conditions
\begin{itemize}
\item $a>0$, $p\ge n$ and $q\in \mathbb{R}$;
\item $a>0$, $1<p<n$ and $q<\Psi\left(\left(0,~\frac{2(n-1)}{n-p}\right)\right)$;
\item $a<0$, $p\ge n$ and $q>\Gamma((0,~+\infty))$;
\item $a<0$, $1<p<n$ and $q>\Gamma\left(\left(0,~\frac{2(n-1)}{n-p}\right)\right)$;
\item $a=0$, $p>1$,
\end{itemize}
then there exist positive constants ${\mathcal{C}}={\mathcal{C}}(n,p,q)$ and $\beta=\beta(n,p,q)\in(0,~+\infty)$, such that the following estimate holds true
\begin{align*}
\mathop{\sup}\limits_{B\left(x_0,\frac{R}{2}\right)}\frac{|\nabla v|^2}{(v+b)^\beta}\le\mathcal{C}\frac{1+\kappa R^2}{R^2}\phi_\beta^{2-\beta}.
\end{align*}
\end{corollary}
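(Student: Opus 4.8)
The plan is to deduce Corollary~\ref{corollary1.2} directly from Theorem~\ref{theorem1.1}: in each of the five listed cases I will exhibit an exponent $\beta$ satisfying \eqref{beta} for which at least one of the two hypotheses on $a,p,q$ in Theorem~\ref{theorem1.1} holds, and then the asserted estimate is exactly \eqref{forest}. The first step is a bookkeeping observation about how those two hypotheses combine. Abbreviate $Q(\beta)=\frac{\beta}{2}\cdot\frac{n+1}{n-1}(p-1)$. For $\beta$ ranging over the \emph{open} interval in \eqref{beta} one has $h>0$: indeed, when $1<p<n$ the bracket $\frac{p-n}{(n-1)^2}\beta+\frac{2}{n-1}$ is positive on $(0,\frac{2(n-1)}{n-p})$ and vanishes at the right endpoint, while for $p\ge n$ it is positive for every $\beta>0$; multiplying by $\beta(p-1)>0$ gives $h>0$. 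Hence $h^{1/2}$ is well defined and the interval $(Q(\beta)-h^{1/2},\,Q(\beta)+h^{1/2})$ appearing in the first hypothesis is non-empty. Since $h^{1/2}\ge 0$, when $a>0$ the second hypothesis $a[Q(\beta)-q]\ge 0$ reads $q\le Q(\beta)$, and together with the first hypothesis it covers precisely the range $q<Q(\beta)+h^{1/2}$; symmetrically, when $a<0$ the two hypotheses together cover precisely the range $q>Q(\beta)-h^{1/2}$. Therefore, with the sign of $a$ fixed, Theorem~\ref{theorem1.1} applies as soon as some admissible $\beta$ satisfies $q<Q(\beta)+h^{1/2}$ (if $a>0$), respectively $q>Q(\beta)-h^{1/2}$ (if $a<0$).

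The second step is to optimize over $\beta$. Let $I$ be the admissible interval of \eqref{beta}. By the definitions of $\Psi$ and $\Gamma$, the condition ``$q<Q(\beta)+h^{1/2}$ for some $\beta\in I$'' is equivalent to $q<\Psi(I)$, and ``$q>Q(\beta)-h^{1/2}$ for some $\beta\in I$'' is equivalent to $q>\Gamma(I)$; since the supremum and infimum are taken over an open interval, one selects a near-extremal $\beta\in I$ rather than an extremal one, which is harmless. This settles the cases $a>0$, $1<p<n$ (with $I=(0,\frac{2(n-1)}{n-p})$), $a<0$, $1<p<n$, and $a<0$, $p\ge n$ (with $I=(0,+\infty)$). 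For $a>0$, $p\ge n$ one notes that $Q(\beta)\to+\infty$ as $\beta\to+\infty$ while $h^{1/2}\ge 0$, so $\Psi((0,+\infty))=+\infty$ and every $q\in\mathbb{R}$ is admitted. Finally, when $a=0$ the second hypothesis of Theorem~\ref{theorem1.1} degenerates to $0\ge 0$, so any $\beta$ satisfying \eqref{beta} works; one may take $\beta=2$, which lies in $I$ because $2<\frac{2(n-1)}{n-p}$ is equivalent to $p>1$ when $1<p<n$ and is automatic when $p\ge n$. In every case the resulting constant is $\mathcal{C}=\mathcal{C}(n,p,q)$ and the exponent is $\beta=\beta(n,p,q)$, as claimed.

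Since the argument is essentially organizational, I expect no substantial obstacle; the only points that must be handled with care are the strict positivity of $h$ on the open interval \eqref{beta} (which is what makes $h^{1/2}$ meaningful and the first hypothesis non-vacuous) and the passage from the open-interval supremum/infimum defining $\Psi$ and $\Gamma$ to a genuinely admissible choice of $\beta$.
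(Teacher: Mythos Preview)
Your proposal is correct and follows essentially the same route as the paper: reduce to Theorem~\ref{theorem1.1} by exhibiting, in each case, an admissible $\beta$ for which one of the two hypotheses holds. Your write-up is in fact more complete than the paper's, which only treats Case~1 explicitly (via the second hypothesis with $\beta$ large) and declares the remaining cases ``similar''; your observation that the two hypotheses together cover the half-line $q<Q(\beta)+h^{1/2}$ (for $a>0$) or $q>Q(\beta)-h^{1/2}$ (for $a<0$) and then optimizing over $\beta$ via $\Psi$ and $\Gamma$ is exactly the argument the paper leaves implicit.
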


\begin{rem}
Here we would like to give several comments on the above corollary.
\begin{itemize}
\item In the case 2, i.e., $a>0$ and $1<p<n$, if there exists a point $\beta_0\in\left(0,~\frac{2(n-1)}{n-p}\right)$ such that
$$\Psi\left(\left(0,~\frac{2(n-1)}{n-p}\right)\right)=\left[\frac{\beta}{2}\cdot\frac{n+1}{n-1}(p-1)+h^{\frac{1}{2}}\right]\bigg|_{\beta=\beta_0},$$ then the condition $q<\Psi\left(\left(0,~\frac{2(n-1)}{n-p}\right)\right)$ can be relaxed to $q\le\Psi\left(\left(0,~\frac{2(n-1)}{n-p}\right)\right)$. For another four cases, we can obtain similar conclusions.

\item If $1<p<n$, then it is not difficult to see that there holds true
$$\frac{n+1}{n-p}(p-1)\le\Psi\left(\left(0,~\frac{2(n-1)}{n-p}\right)\right).$$
Usually, the above is a strict inequality, for instance, if we let $n=3$ and $p=2$, then
\begin{align}\label{example}
\frac{\beta}{2}\cdot\frac{n+1}{n-1}(p-1)+h^{\frac{1}{2}}=\beta+\sqrt{\beta\left(1-\frac{\beta}{4}\right)}
\end{align}
and $\beta\in (0,~4)$. Hence, we can easily check that \eqref{example} can attain its maximum at an interior point $\beta_0=2+\frac{4\sqrt{5}}{5}\in(0,4)$. Therefore, we can know that
\begin{align*}
\Psi((0,4))=2+\sqrt{5}>\frac{n+1}{n-p}(p-1)=4.
\end{align*}
On the other hand, we also have
$$\Psi((0,4))=2+\sqrt{5}>\frac{n+3}{n-1}(p-1)=3.$$
This indicates that for $q\geq \frac{n+3}{n-1}(p-1)$ one also derives the gradient estimate.
\end{itemize}
\end{rem}
\medskip

\begin{corollary}\label{corollary1.3}
Let $p>1$, $b\ge0$ and $(M,g)$ be an $n$-dim $(n\ge3)$ complete manifold with $\mathrm{Ric}\ge-(n-1)\kappa$, where $\kappa$ is a non-negative constant. Assume $v$ is a positive solution to equation \eqref{1} on the geodesic ball $B(x_0,2R)\subset M$. If the constants $a$, $p$ and $q$ satisfy the following conditions
\begin{align*}
a<0~\quad\mbox{and}~\quad q>\Gamma((0,2]),
\end{align*}
then there exist positive constants ${\mathcal{C}}={\mathcal{C}}(n,p,q)$ and $\beta=\beta(n,p,q)\in(0,2]$, such that the following estimate holds true
\begin{align*}
\mathop{\sup}\limits_{B\left(x_0,\frac{R}{2}\right)}\frac{|\nabla v|^2}{(v+b)^\beta}\le\mathcal{C}\frac{1+\kappa R^2}{R^2}\bigg(\mathop{\sup}\limits_{B(x_0,R)}v+b\bigg)^{2-\beta}.
\end{align*}
\end{corollary}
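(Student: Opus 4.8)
The plan is to obtain Corollary~\ref{corollary1.3} as a direct consequence of Theorem~\ref{theorem1.1} after choosing the exponent $\beta$ appropriately. Since we are given $a<0$ and $q>\Gamma((0,2])$, where by definition
$$\Gamma((0,2])=\mathop{\inf}\limits_{\beta\in(0,2]}\left[\frac{\beta}{2}\cdot\frac{n+1}{n-1}(p-1)-h^{\frac12}\right],$$
the definition of infimum produces a value $\beta_0\in(0,2]$ with
$$\frac{\beta_0}{2}\cdot\frac{n+1}{n-1}(p-1)-h(\beta_0)^{\frac12}<q,\qquad h(\beta_0)=\beta_0(p-1)\left[\frac{p-n}{(n-1)^2}\beta_0+\frac{2}{n-1}\right].$$
I would fix this $\beta_0$ as the parameter $\beta=\beta(n,p,q)$ appearing in the statement; since it depends only on $n,p,q$, the constant $\mathcal C=\mathcal C(n,p,q,\beta_0)$ delivered by Theorem~\ref{theorem1.1} is then a constant $\mathcal C(n,p,q)$.

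Before invoking Theorem~\ref{theorem1.1} I would verify that $\beta_0$ is admissible and that $h(\beta_0)^{1/2}$ is meaningful. Because $p>1$ we have $\tfrac{2(n-1)}{n-p}>2\ge\beta_0$ when $1<p<n$, while every positive $\beta$ is allowed when $p\ge n$; hence $\beta_0$ satisfies \eqref{beta}. The same inequality $\beta_0\le 2<\tfrac{2(n-1)}{n-p}$ (resp.\ $p\ge n$) makes the factor $\tfrac{(p-n)\beta_0+2(n-1)}{(n-1)^2}$ strictly positive, so $h(\beta_0)>0$ and $h(\beta_0)^{1/2}$ is a genuine positive number.

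The crucial step is a short dichotomy showing that $\beta_0$ meets one of the two hypotheses of Theorem~\ref{theorem1.1}. If in addition
$$q<\frac{\beta_0}{2}\cdot\frac{n+1}{n-1}(p-1)+h(\beta_0)^{\frac12},$$
then, combined with the lower bound already secured, this places us in the first case (the requirement $a\ne0$ being automatic from $a<0$). If instead that upper bound fails, then $q\ge\tfrac{\beta_0}{2}\cdot\tfrac{n+1}{n-1}(p-1)+h(\beta_0)^{1/2}>\tfrac{\beta_0}{2}\cdot\tfrac{n+1}{n-1}(p-1)$ by $h(\beta_0)>0$, so $a\big[\tfrac{\beta_0}{2}\cdot\tfrac{n+1}{n-1}(p-1)-q\big]\ge 0$ because $a<0$, which is the second case. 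In either situation Theorem~\ref{theorem1.1} applies with $\beta=\beta_0$ and yields
$$\mathop{\sup}\limits_{B\left(x_0,\frac{R}{2}\right)}\frac{|\nabla v|^2}{(v+b)^{\beta_0}}\le\mathcal C\,\frac{1+\kappa R^2}{R^2}\,\phi_{\beta_0}^{\,2-\beta_0}.$$
I would finish by rewriting $\phi_{\beta_0}^{2-\beta_0}=\big(\sup_{B(x_0,R)}v+b\big)^{2-\beta_0}$: this is the definition of $\phi_\beta$ when $\beta_0\in(0,2)$, and when $\beta_0=2$ both sides equal $1$. The only point demanding real care is this dichotomy — one must use both the sign of $a$ and the strict positivity of $h(\beta_0)$ to be certain that no admissible $q>\Gamma((0,2])$ slips through the gap between the two-sided interval condition and the one-sided condition of Theorem~\ref{theorem1.1} — but beyond that the argument is a routine parameter selection, so I anticipate no serious obstacle.
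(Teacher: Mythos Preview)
Your proposal is correct and follows the same route the paper indicates: the paper omits the proof of Corollary~\ref{corollary1.3} entirely, stating only that it is ``similar to Corollary~\ref{corollary1.2}'', i.e.\ one checks that the hypotheses of Theorem~\ref{theorem1.1} are met for some admissible $\beta\in(0,2]$. Your write-up actually supplies the details the paper omits --- in particular the dichotomy showing that either the two-sided interval condition or, via $h(\beta_0)>0$ and $a<0$, the one-sided sign condition of Theorem~\ref{theorem1.1} must hold --- and the final identification $\phi_{\beta_0}^{2-\beta_0}=(\sup_{B(x_0,R)}v+b)^{2-\beta_0}$ for $\beta_0\in(0,2]$ is handled correctly.
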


If $b>0$, by using Theorem \ref{theorem1.1} we can achieve the following conclusion.
\begin{corollary}\label{corollary1.4}
Let $p>1$, $b>0$ and $(M,g)$ be an $n$-dim $(n\ge3)$ complete manifold with $\mathrm{Ric}\ge-(n-1)\kappa$, where $\kappa$ is a non-negative constant. Assume $v$ is a positive solution to equation \eqref{1} on the geodesic ball $B(x_0,2R)\subset M$. If the constants $a$, $p$ and $q$ satisfy one of the following five conditions
\begin{itemize}
\item $a>0$, $p\ge n$ and $q\in \mathbb{R}$;
\item $a>0$, $1<p<n$ and $q<\Psi\left(\left[2,~\frac{2(n-1)}{n-p}\right)\right)$;
\item $a<0$, $p\ge n$ and $q>\Gamma([2,~+\infty))$;
\item $a<0$, $1<p<n$ and $q>\Gamma\left(\left[2,~\frac{2(n-1)}{n-p}\right)\right)$;
\item $a=0$, $p>1$,
\end{itemize}
then there exist positive constants ${\mathcal{C}}={\mathcal{C}}(n,p,q,b)$ and $\beta=\beta(n,p,q)\in[2,~+\infty)$, such that the following estimate holds true
\begin{align*}
\mathop{\sup}\limits_{B\left(x_0,\frac{R}{2}\right)}\frac{|\nabla v|^2}{(v+b)^\beta}\le\mathcal{C}\frac{1+\kappa R^2}{R^2}.
\end{align*}
\end{corollary}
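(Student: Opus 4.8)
The plan is to deduce Corollary~\ref{corollary1.4} from Theorem~\ref{theorem1.1}, the new input being the strict positivity $b>0$. Fix $\beta\in[2,+\infty)$. Since $v>0$ on $B(x_0,2R)$, for $\beta>2$ we have $\phi_\beta=\inf_{B(x_0,R)}(v+b)\ge b>0$, and $\phi_2=1$; because the exponent $2-\beta$ is non-positive it follows in either case that $\phi_\beta^{2-\beta}\le b^{2-\beta}$ (with equality if $\beta=2$). Inserting this into \eqref{forest} turns the conclusion of Theorem~\ref{theorem1.1} into
\[
\mathop{\sup}\limits_{B\left(x_0,\frac{R}{2}\right)}\frac{|\nabla v|^2}{(v+b)^\beta}\le\mathcal{C}\,b^{2-\beta}\,\frac{1+\kappa R^2}{R^2}.
\]
If the exponent $\beta$ can be picked depending only on $n,p,q$, then Theorem~\ref{theorem1.1} yields $\mathcal{C}=\mathcal{C}(n,p,q)$ and the product $\mathcal{C}\,b^{2-\beta}$ depends only on $n,p,q,b$, which is precisely the claimed estimate. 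So the whole task reduces to producing, in each of the five cases, an exponent $\beta\in[2,+\infty)$ that lies in the window \eqref{beta} (note $\beta=2$ always does, since $p>1$) and satisfies one of the two hypotheses on $a,p,q$ of Theorem~\ref{theorem1.1}.

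To check the cases, I first record that $h>0$ throughout the window \eqref{beta}: the factors $\beta$ and $p-1$ are positive, and $\frac{p-n}{(n-1)^2}\beta+\frac{2}{n-1}$ is positive automatically when $p\ge n$ and exactly for $\beta<\frac{2(n-1)}{n-p}$ when $1<p<n$. Consequently, for any admissible $\beta$,
\[
\frac{\beta}{2}\cdot\frac{n+1}{n-1}(p-1)-h^{\frac{1}{2}}<\frac{\beta}{2}\cdot\frac{n+1}{n-1}(p-1)<\frac{\beta}{2}\cdot\frac{n+1}{n-1}(p-1)+h^{\frac{1}{2}},
\]
so that the union of the two admissible $q$-ranges of Theorem~\ref{theorem1.1} equals $\left(-\infty,\ \frac{\beta}{2}\cdot\frac{n+1}{n-1}(p-1)+h^{\frac{1}{2}}\right)$ when $a>0$, equals $\left(\frac{\beta}{2}\cdot\frac{n+1}{n-1}(p-1)-h^{\frac{1}{2}},\ +\infty\right)$ when $a<0$, and is all of $\mathbb{R}$ when $a=0$. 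The five cases then follow at once. If $a=0$ take $\beta=2$. If $a>0$ and $p\ge n$, then $\frac{\beta}{2}\cdot\frac{n+1}{n-1}(p-1)+h^{\frac{1}{2}}\ge\frac{\beta}{2}\cdot\frac{n+1}{n-1}(p-1)\to+\infty$ as $\beta\to+\infty$, so every $q\in\mathbb{R}$ lies in the half-line for $\beta$ large. If $a<0$ and $p\ge n$, the hypothesis $q>\Gamma([2,+\infty))$ says exactly (by the definition of the infimum) that some $\beta\in[2,+\infty)$ satisfies $q>\frac{\beta}{2}\cdot\frac{n+1}{n-1}(p-1)-h^{\frac{1}{2}}$. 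Finally, if $1<p<n$, then $[2,\frac{2(n-1)}{n-p})$ is a nonempty subinterval of the window \eqref{beta} (since $p>1$ gives $\frac{2(n-1)}{n-p}>2$), and the hypotheses $q<\Psi([2,\frac{2(n-1)}{n-p}))$ for $a>0$ and $q>\Gamma([2,\frac{2(n-1)}{n-p}))$ for $a<0$ are, by the definitions of supremum and infimum, precisely the conditions under which some $\beta$ of this subinterval places $q$ into the relevant half-line.

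I do not expect a real obstacle here: the corollary is a packaging of Theorem~\ref{theorem1.1} together with the elementary bound $\phi_\beta^{2-\beta}\le b^{2-\beta}$. The one step that merits care is the reduction to an existence statement for $\beta$ — checking that the union of the two hypotheses of Theorem~\ref{theorem1.1} is the half-line I claimed (which is where $h>0$ is used) and that a strict inequality against $\Psi$ or $\Gamma$, these being defined as a supremum and an infimum, really does supply an admissible $\beta$, which is immediate from the definitions of sup and inf and requires no extremum to be attained. As pointed out in the remark following Corollary~\ref{corollary1.2}, in the subcases where the relevant extremum is attained at an interior point of the interval the strict inequalities can even be weakened to non-strict ones, but that refinement is not needed for the statement as given.
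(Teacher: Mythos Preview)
Your proposal is correct and follows exactly the route the paper intends: the authors omit the proof of Corollary~\ref{corollary1.4} and refer to that of Corollary~\ref{corollary1.2}, which likewise reduces everything to Theorem~\ref{theorem1.1} by selecting an admissible $\beta$ in each case. Your additional observation that $b>0$ forces $\phi_\beta^{2-\beta}\le b^{2-\beta}$ for $\beta\ge 2$ is precisely the extra ingredient that distinguishes Corollary~\ref{corollary1.4} from Corollary~\ref{corollary1.2}, and your verification that the union of the two hypotheses of Theorem~\ref{theorem1.1} is the claimed half-line (via $h>0$) together with the sup/inf argument for $\Psi$ and $\Gamma$ is clean and complete.
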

\medskip

In the case $b=0$, then \eqref{1} reduces to
\begin{align}\label{b=0}
\Delta_pv+av^q=0.
\end{align}
In the following theorem we show whether or not the exact Cheng-Yau $\log$-gradient estimates for the positive solutions to this equation \eqref{b=0} holds true are equivalent to whether or not the positive solutions to \eqref{b=0} fulfill Harnack inequality.

\begin{theorem}\label{corollary1.5}
Let $p>1$ and $(M,g)$ be an $n$-dim $(n\ge3)$ complete manifold with $\mathrm{Ric}\ge-(n-1)\kappa$, where $\kappa$ is a non-negative constant. Assume $v$ is a positive solution to equation \eqref{b=0} on the geodesic ball $B(x_0,2R)\subset M$ and satisfies the following Harnack inequality
\begin{align*}
\mathop{\sup}\limits_{B\left(x_0,R\right)}v\le l\mathop{\inf}\limits_{B\left(x_0,R\right)}v.
\end{align*}
If the constants $a$, $q$ and $p$ satisfy one of the following five conditions
\begin{itemize}
\item $a>0$, $p\ge n$ and $q\in \mathbb{R}$;
\item $a>0$, $1<p<n$ and $q<\Psi\left(\left(0,~\frac{2(n-1)}{n-p}\right)\right)$;
\item $a<0$, $p\ge n$ and $q>\Gamma((0,~+\infty))$;
\item $a<0$, $1<p<n$ and $q>\Gamma\left(\left(0,~\frac{2(n-1)}{n-p}\right)\right)$;
\item $a=0$, $p>1$,
\end{itemize}
then there exists a positive constant ${\mathcal{C}}={\mathcal{C}}(n,p,q,l)$ such that the following estimate holds true
\begin{align*}
\mathop{\sup}\limits_{B\left(x_0,\frac{R}{2}\right)}\frac{|\nabla v|^2}{v^2}\le\mathcal{C}\frac{1+\kappa R^2}{R^2}.
\end{align*}
\end{theorem}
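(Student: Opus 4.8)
The plan is to deduce Theorem \ref{corollary1.5} directly from Theorem \ref{theorem1.1} (equivalently, Corollary \ref{corollary1.2}) by exploiting the Harnack inequality to absorb the factor $\phi_\beta^{2-\beta}$ into the dimensional constant. First I would observe that since $b=0$, the equation is \eqref{b=0}, and each of the five hypotheses on $(a,p,q)$ is precisely one of the five conditions in Corollary \ref{corollary1.2}. Hence Corollary \ref{corollary1.2} applies on $B(x_0,2R)$, yielding a constant $\mathcal{C}_0=\mathcal{C}_0(n,p,q)$ and an exponent $\beta=\beta(n,p,q)\in(0,+\infty)$ with
\begin{align*}
\mathop{\sup}\limits_{B(x_0,\frac{R}{2})}\frac{|\nabla v|^2}{v^{\beta}}\le \mathcal{C}_0\,\frac{1+\kappa R^2}{R^2}\,\phi_\beta^{2-\beta}.
\end{align*}

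Next I would rewrite the left-hand side as $\dfrac{|\nabla v|^2}{v^2}\cdot v^{2-\beta}$ and bound $v^{2-\beta}$ from below and $\phi_\beta^{2-\beta}$ from above on $B(x_0,\frac{R}{2})\subset B(x_0,R)$ using the Harnack inequality $\sup_{B(x_0,R)}v\le l\inf_{B(x_0,R)}v$. The key point is that $\phi_\beta$ is, by definition, either $\sup_{B(x_0,R)}v$ (when $0<\beta<2$), or $1$ (when $\beta=2$), or $\inf_{B(x_0,R)}v$ (when $\beta>2$); in every one of these three cases the Harnack inequality gives
\begin{align*}
\frac{\phi_\beta^{2-\beta}}{v^{2-\beta}}\le l^{|2-\beta|}
\end{align*}
pointwise on $B(x_0,\frac{R}{2})$. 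Indeed, for $0<\beta<2$ the exponent $2-\beta>0$ and $\phi_\beta=\sup_{B(x_0,R)}v\le l\inf_{B(x_0,R)}v\le l\,v$; for $\beta>2$ the exponent is negative and $\phi_\beta=\inf_{B(x_0,R)}v\ge l^{-1}\sup_{B(x_0,R)}v\ge l^{-1}v$, so again $\phi_\beta^{2-\beta}/v^{2-\beta}\le l^{\beta-2}$; and for $\beta=2$ the ratio is exactly $1$. Multiplying the displayed gradient estimate through by $v^{\beta-2}$ therefore produces
\begin{align*}
\mathop{\sup}\limits_{B(x_0,\frac{R}{2})}\frac{|\nabla v|^2}{v^{2}}\le \mathcal{C}_0\,l^{|2-\beta|}\,\frac{1+\kappa R^2}{R^2},
\end{align*}
and since $\beta=\beta(n,p,q)$ the quantity $\mathcal{C}:=\mathcal{C}_0\,l^{|2-\beta|}$ depends only on $n,p,q,l$, which is exactly the asserted estimate.

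There is essentially no serious obstacle here beyond bookkeeping: the real content has already been packaged into Theorem \ref{theorem1.1} and Corollary \ref{corollary1.2}, so the remaining work is the elementary case-check above. The one subtlety worth writing out carefully is the $\beta>2$ case, where one must be careful that raising the Harnack inequality to a negative power reverses it correctly, and that the resulting exponent of $l$ is $|2-\beta|$ rather than $2-\beta$; I would handle this by noting that in all cases $\phi_\beta$ and $v$ are comparable up to the factor $l$ in the direction dictated by the sign of $2-\beta$, so that $\phi_\beta^{2-\beta}\le l^{|2-\beta|}v^{2-\beta}$ on $B(x_0,\frac{R}{2})$ uniformly. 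If one prefers, the statement can equivalently be routed through Corollary \ref{corollary1.3} or \ref{corollary1.4} when $\beta$ happens to lie in $(0,2]$ or $[2,+\infty)$, but invoking Corollary \ref{corollary1.2} covers all five cases at once.
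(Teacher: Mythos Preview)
Your proposal is correct and follows essentially the same route as the paper: invoke Corollary \ref{corollary1.2} with $b=0$ to obtain the $\beta$-weighted estimate, then use the Harnack inequality in the three cases $\beta<2$, $\beta=2$, $\beta>2$ to absorb $\phi_\beta^{2-\beta}v^{\beta-2}$ into a constant $l^{|2-\beta|}$. Your packaging via the single bound $\phi_\beta^{2-\beta}/v^{2-\beta}\le l^{|2-\beta|}$ is in fact slightly cleaner than the paper's case-by-case multiplication by $\sup_{B(x_0,R)}v^{\beta-2}$, but the content is identical.
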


As a direct consequence, we obtain

\begin{corollary}\label{corollary1.5*}
Let $p>1$ and $(M,g)$ be an $n$-dim $(n\ge3)$ complete manifold with nonnegative Ricci curvature. Assume $v$ is a positive solution to equation \eqref{b=0} on any given geodesic ball $B(x_0,2R)\subset M$ and satisfies the following Harnack inequality
\begin{align*}
\mathop{\sup}\limits_{B\left(x_0,R\right)}v\le l\mathop{\inf}\limits_{B\left(x_0,R\right)}v,
\end{align*}
where $l$ is independent of $v$ and $R$. If the constants $a$, $q$ and $p$ satisfy one of the following five conditions
\begin{itemize}
\item $a>0$, $p\ge n$ and $q\in \mathbb{R}$;
\item $a>0$, $1<p<n$ and $q<\Psi\left(\left(0,~\frac{2(n-1)}{n-p}\right)\right)$;
\item $a<0$, $p\ge n$ and $q>\Gamma((0,~+\infty))$;
\item $a<0$, $1<p<n$ and $q>\Gamma\left(\left(0,~\frac{2(n-1)}{n-p}\right)\right)$;
\item $a=0$, $p>1$,
\end{itemize}
then there exist a positive constant ${\mathcal{C}}={\mathcal{C}}(n,p,q,l)$ such that the following estimate holds true
\begin{align*}
\mathop{\sup}\limits_{B\left(x_0,\frac{R}{2}\right)}\frac{|\nabla v|^2}{v^2}\le\mathcal{C}\cdot\frac{1}{R^2}.
\end{align*}

Conversely, if the above $\log$-gradient estimate holds true, then, for any given $B\left(x_0,R\right)\subset M$ there holds
$$\mathop{\sup}\limits_{B\left(x_0,R\right)}v\le l\mathop{\inf}\limits_{B\left(x_0,R\right)}v$$
where $l$ is independent of $v$ and $R$.
\end{corollary}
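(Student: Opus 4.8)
The plan is to read off both implications from Theorem \ref{corollary1.5}, supplemented in the converse direction by a routine integration of the logarithmic gradient along geodesics; so the argument splits into two independent parts and contains essentially no new analytic difficulty.

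\emph{Forward direction.} Since $(M,g)$ has nonnegative Ricci curvature, the hypothesis $\mathrm{Ric}\ge-(n-1)\kappa$ of Theorem \ref{corollary1.5} holds with $\kappa=0$, and the five listed conditions on $a,p,q$ coincide verbatim with those of Theorem \ref{corollary1.5}. Applying that theorem on $B(x_0,2R)$ and putting $\kappa=0$ in its conclusion gives
$$\mathop{\sup}\limits_{B\left(x_0,\frac{R}{2}\right)}\frac{|\nabla v|^2}{v^2}\le\mathcal{C}\,\frac{1+\kappa R^2}{R^2}=\frac{\mathcal{C}}{R^2},\qquad \mathcal{C}=\mathcal{C}(n,p,q,l).$$
No further work is needed for this half.

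\emph{Converse direction.} Now suppose that the logarithmic gradient bound $\sup_{B(y_0,r/2)}|\nabla v|^2/v^2\le \mathcal{C}/r^2$ holds on every geodesic ball $B(y_0,2r)\subset M$ with a constant $\mathcal{C}$ independent of $v$ and $r$, and fix a ball $B(x_0,R)\subset M$. First I would rescale: applying the hypothesis with $y_0=x_0$ and $r=2R$ yields $|\nabla\log v|\le \sqrt{\mathcal{C}}/(2R)$ throughout $B(x_0,R)$. Next, given an arbitrary $x\in B(x_0,R)$, I would integrate $|\nabla\log v|$ along a minimizing geodesic from $x_0$ to $x$; since such a geodesic has length $d(x_0,x)<R$ and never leaves $B(x_0,R)$ --- so no geodesic convexity of the ball is needed --- this gives $|\log v(x)-\log v(x_0)|<\sqrt{\mathcal{C}}/2$. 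Exponentiating and then taking the supremum and the infimum over $x\in B(x_0,R)$ produces $\sup_{B(x_0,R)}v\le e^{\sqrt{\mathcal{C}}}\inf_{B(x_0,R)}v$; since $\mathcal{C}$ is independent of $v$ and $R$, so is $l:=e^{\sqrt{\mathcal{C}}}$, which is the assertion.

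\emph{Main obstacle.} There is no substantive obstacle here, as all the real content is already packaged in Theorem \ref{corollary1.5}; the only points requiring a little care are the harmless radius rescaling $R\mapsto 2R$, needed so that the logarithmic gradient bound is available on the full ball $B(x_0,R)$ rather than merely on a concentric half-ball, and the elementary observation that a minimizing geodesic issuing from the center of a ball stays inside that ball, which removes any appeal to convexity. If one only assumed $v$ to be defined on $B(x_0,2R)$ instead of globally, the converse would additionally require $v$ to be defined on $B(x_0,4R)$, but this is automatic for globally defined solutions.
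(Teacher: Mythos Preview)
Your proposal is correct and is exactly the obvious argument the paper has in mind; indeed, the paper's own proof of this corollary consists of the single sentence ``The proof of this Corollary is obvious and we omit it,'' so your forward direction (specializing Theorem~\ref{corollary1.5} to $\kappa=0$) and your converse direction (integrating $|\nabla\log v|$ along a minimizing geodesic from the center) are the intended steps.
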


If we consider \eqref{b=0} in $\mathbb{R}^n$ ($n\ge3$), we can achieve the following conclusion.
\begin{corollary}\label{corollary1.6}
Assume $v$ is a positive solution to equation \eqref{b=0} on the ball $B(x_0,2R)\subset \mathbb{R}^n$.  If the constants $a$, $q$ and $p$ satisfy one of the following four conditions
\begin{itemize}
\item $a>0$, $1<p<n$, $p\neq q$ and $q\in\left(p-1,~\frac{(p-1)n}{n-p}\right)$;
\item $a>0$, $p\ge n$, $q\neq p$ and $q\in\left(0,~+\infty\right)$;
\item $a\ge 1$ and $1<p=q<n<p^2$;
\item $a\ge 1$ and $p=q\ge n$,
\end{itemize}
then there exist a positive constant ${\mathcal{C}}={\mathcal{C}}(n,p,q,a)$ such that the following estimate holds true
\begin{align*}
\mathop{\sup}\limits_{B\left(x_0,\frac{R}{2}\right)}\frac{|\nabla v|^2}{v^2}\le\frac{\mathcal{C}}{R^2}.
\end{align*}
\end{corollary}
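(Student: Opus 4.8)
The plan is to deduce Corollary~\ref{corollary1.6} from Corollary~\ref{corollary1.5*} by applying the latter on $M=\mathbb{R}^n$, which is complete with $\mathrm{Ric}\equiv 0\ge 0$. Thus two things must be checked for each of the four sets of hypotheses: (a)~the triple $(a,p,q)$ falls under one of the five admissible conditions of Corollary~\ref{corollary1.5*}, and (b)~every positive solution $v$ of \eqref{b=0} on a Euclidean ball $B(x_0,2R)$ satisfies $\sup_{B(x_0,R)}v\le l\,\inf_{B(x_0,R)}v$ with $l$ depending only on $n,p,q$ (and on $a$ in the two cases with $p=q$), and in particular independent of $v$ and of $R$. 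Once (a) and (b) are in hand, Corollary~\ref{corollary1.5*} gives exactly $\sup_{B(x_0,R/2)}|\nabla v|^2v^{-2}\le\mathcal{C}(n,p,q,a)R^{-2}$.

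Step~(a) is a short arithmetic check. In all four cases $a\ne 0$, so the alternative ``$a=0$'' is never invoked. When $p\ge n$ (cases~2 and~4) the condition ``$a>0$, $p\ge n$, $q\in\mathbb{R}$'' applies directly. When $1<p<n$ (cases~1 and~3) one needs $q<\Psi\big((0,\tfrac{2(n-1)}{n-p})\big)$, and here I would use the inequality $\tfrac{n+1}{n-p}(p-1)\le\Psi\big((0,\tfrac{2(n-1)}{n-p})\big)$ noted in the remark after Corollary~\ref{corollary1.2}. In case~1 this yields $q<\tfrac{n(p-1)}{n-p}<\tfrac{(n+1)(p-1)}{n-p}\le\Psi\big((0,\tfrac{2(n-1)}{n-p})\big)$; in case~3, where $q=p$, the hypothesis $n<p^2$ is precisely what makes $p<\tfrac{(n+1)(p-1)}{n-p}$, hence again $q=p<\Psi\big((0,\tfrac{2(n-1)}{n-p})\big)$.

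Step~(b) is the substantive one. The lower bound is easy: $a>0$ makes $-\Delta_p v=av^q\ge 0$, so $v$ is a positive $p$-superharmonic function and the classical weak Harnack inequality bounds $\inf_{B(x_0,R)}v$ from below by an integral average of $v$ over a slightly larger ball, with constant depending only on $n$ and $p$. For the matching upper bound I would appeal to the universal interior estimates of Serrin--Zou for quasilinear inequalities (\cite{James}): in the superlinear subcritical range $p-1<q<\tfrac{n(p-1)}{n-p}$ --- which contains case~1 and contains $q=p$ exactly when $n<p^2$, i.e.\ case~3 --- any positive solution of \eqref{b=0} obeys $\sup_{B(x_0,R)}v\le C(n,p,q)\,a^{-1/(q-p+1)}R^{-p/(q-p+1)}$, and for $p\ge n$ the corresponding one-sided estimate holds for all admissible $q$ (cases~2 and~4). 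Writing \eqref{b=0} as $-\Delta_p v=(a v^{q-p+1})v^{p-1}$, this a~priori bound forces $\|a v^{q-p+1}\|_{L^\infty(B(x_0,R))}R^p\le C(n,p,q)$, so Serrin's Harnack inequality for equations of the type $-\Delta_p v=Vv^{p-1}$ with $\|V\|_{L^\infty(B(x_0,R))}R^p$ controlled produces the constant $l$; when $q=p$ the bound retains a factor involving $a$, which is why the hypothesis $a\ge1$ is imposed in cases~3 and~4.

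The main obstacle is step~(b): producing the Harnack inequality with a constant that is genuinely independent of the particular solution. The precise ranges of $q$ in the statement --- the open interval $\big(p-1,\tfrac{n(p-1)}{n-p}\big)$ for $1<p<n$, its upper endpoint translating into $n<p^2$ when $q=p$, and the full range $(0,+\infty)\setminus\{p\}$ for $p\ge n$ --- are dictated exactly by when this universal a~priori bound is available, and the borderline/degenerate cases (notably $q=p$, and the sublinear range when $p\ge n$) are the delicate ones, requiring the full quasilinear machinery of \cite{James} rather than a soft comparison. Step~(a), by contrast, merely reuses the lower bounds for $\Psi$ already recorded in the paper.
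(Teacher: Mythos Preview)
Your overall strategy coincides with the paper's: check that $(a,p,q)$ falls under the hypotheses of Theorem~\ref{corollary1.5} (on $\mathbb{R}^n$ with $\kappa=0$) and then supply the required Harnack inequality as input. Your step~(a) is in fact spelled out more explicitly than in the paper, which simply invokes Theorem~\ref{corollary1.5} without rechecking the $q<\Psi$ condition.

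The difference lies in step~(b). The paper does not rebuild the Harnack inequality from a universal a~priori bound plus the standard Harnack for equations with bounded potential, as you propose. Instead it quotes the Harnack inequality directly from Serrin--Zou~\cite{James}, restated in the paper as Lemma~\ref{lemma2.4} (for $1<p<n$) and Lemma~\ref{lemma2.5} (for $p\ge n$): these give $\sup_{B_R}\omega\le\mathcal{C}(n,p,s,\Lambda)\inf_{B_R}\omega$ for positive solutions of $\omega^{s-1}\le -\Delta_p\omega\le\Lambda\,\omega^{s-1}$. When $p\ne q$ the paper first rescales $v$ by a suitable power of $a$ to reduce to the normalized equation $\Delta_p w+w^q=0$, after which Lemma~\ref{lemma2.4} or~\ref{lemma2.5} applies with $\Lambda=1$; when $p=q$ no such rescaling is available, and the hypothesis $a\ge 1$ is precisely what is needed to fit $v$ itself into the two-sided inequality with $\Lambda=a$. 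Your route through the universal $L^\infty$ bound would succeed in the superlinear range $q>p-1$, but, as you yourself flag, it does not cover the sublinear portion of Case~2 ($p\ge n$, $0<q\le p-1$), since Serrin--Zou's pointwise upper bound requires superlinearity. The resolution is exactly the paper's: Lemma~\ref{lemma2.5} already delivers the Harnack inequality for all $q>0$ when $p\ge n$, with no intermediate pointwise bound needed. In short, your outline is correct, but the paper's direct citation of the Serrin--Zou Harnack result is simpler and uniformly covers all four cases, including the sublinear one where your detour stalls.
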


\begin{rem}
The corollary tells us that $\frac{n+3}{n-1}(p-1)$ is not optimal bound to derive the exact Cheng-Yau type $\log$-gradient estimate, since there holds true
$$\mathop{\sup}\limits_{B\left(x_0,\frac{R}{2}\right)}\frac{|\nabla v|^2}{v^2}\le\frac{\mathcal{C}}{R^2}$$
in the case $a>0$, $p\ge n$, $q\neq p$ and $q\in\left(0,~+\infty\right)$.
\end{rem}

By using Corollary \ref{corollary1.4}, we can obtain the following Liouville type theorems.
\begin{theorem}\label{theorem1.7}
Let $(M,g)$ be a complete non-compact Riemannian manifold with non-negative Ricci curvature and $\dim M=n\ge3$. Furthermore, let $v$ be a positive solution to equation \eqref{1} on $M$. If $b>0$ and the constants $a$, $p$ and $q$ satisfy one of the following four conditions
\begin{itemize}
\item $a>0$, $p\ge n$ and $q\in \mathbb{R}$;
\item $a>0$, $1<p<n$ and $q<\Psi\left(\left[2,~\frac{2(n-1)}{n-p}\right)\right)$;
\item $a<0$, $p\ge n$ and $q>\Gamma([2,~+\infty))$;
\item $a<0$, $1<p<n$ and $q>\Gamma\left(\left[2,~\frac{2(n-1)}{n-p}\right)\right)$,
\end{itemize}
then \eqref{1} admits no positive solution.
\end{theorem}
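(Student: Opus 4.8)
The plan is to obtain Theorem \ref{theorem1.7} as an immediate consequence of Corollary \ref{corollary1.4} by sending the radius to infinity, which is the standard route from a local gradient estimate to a Liouville type theorem. I would argue by contradiction: assume $v$ is a positive solution of \eqref{1} on all of $M$ with $b>0$ and $a,p,q$ satisfying one of the four listed hypotheses. Each of these four is precisely one of the first four alternatives of Corollary \ref{corollary1.4}, so that result is applicable. Since $\mathrm{Ric}\ge 0$, I take $\kappa=0$; because $M$ is complete and non-compact, for every $R>0$ one can find a geodesic ball $B(x_0,2R)\subset M$, and on it Corollary \ref{corollary1.4} yields constants $\mathcal{C}=\mathcal{C}(n,p,q,b)$ and $\beta=\beta(n,p,q)\in[2,+\infty)$, both independent of $R$, with
$$\sup_{B(x_0,R/2)}\frac{|\nabla v|^2}{(v+b)^\beta}\le\mathcal{C}\,\frac{1+\kappa R^2}{R^2}=\frac{\mathcal{C}}{R^2}.$$

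Next I would fix an arbitrary $x\in M$; for $R$ large enough $x$ lies in $B(x_0,R/2)$, hence $|\nabla v(x)|^2(v(x)+b)^{-\beta}\le \mathcal{C}R^{-2}$, and letting $R\to+\infty$ forces $\nabla v(x)=0$. Since $x$ was arbitrary, $v$ is constant on $M$. Then $\nabla v\equiv 0$ gives $\Delta_p v\equiv 0$, so \eqref{1} degenerates to $a(v+b)^q=0$; but $a\neq 0$ in all four cases and $v+b\ge b>0$, so $a(v+b)^q\neq 0$, a contradiction. Hence \eqref{1} admits no positive solution on $M$ under the stated hypotheses.

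The argument is essentially mechanical once Corollary \ref{corollary1.4} is in hand; the one point deserving care is that the constant $\mathcal{C}$ in that corollary genuinely depends only on $n,p,q,b$ and not on $R$, which is exactly what makes the right-hand side vanish in the limit (this is also why $b>0$ is needed: it is what lets the factor $\phi_\beta^{2-\beta}$ with $\beta\ge 2$ be absorbed into a fixed constant). I therefore do not anticipate any substantive obstacle in this final step — all of the real work sits in the proof of the gradient estimate of Theorem \ref{theorem1.1} and its corollaries.
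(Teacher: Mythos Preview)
Your proposal is correct and follows essentially the same route as the paper: apply Corollary \ref{corollary1.4} with $\kappa=0$, let $R\to\infty$ to force $\nabla v\equiv 0$, and then observe that a positive constant cannot solve \eqref{1} when $a\neq 0$ and $b>0$. Your version is slightly more detailed (fixing an arbitrary point, explaining why $b>0$ matters), but the argument is identical in substance.
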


As a direct consequence of Corollary \ref{corollary1.6}, we have
\begin{theorem}\label{theorem1.8}
Assume $v$ is a positive solution to equation \eqref{b=0} on $\mathbb{R}^n$ and $n\ge3$. If the constants $a$, $p$ and $q$ satisfy one of the following four conditions
\begin{itemize}
\item $a>0$, $1<p<n$, $p\neq q$ and $q\in\left(p-1,~\frac{(p-1)n}{n-p}\right)$;
\item $a>0$, $p\ge n$, $p\neq q$ and $q\in\left(0,~+\infty\right)$;
\item $a\ge 1$ and $1<p=q<n<p^2$;
\item $a\ge 1$ and $p=q\ge n$,
\end{itemize}
then \eqref{b=0} admits no positive solution.
\end{theorem}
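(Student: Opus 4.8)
The plan is to obtain Theorem~\ref{theorem1.8} from Corollary~\ref{corollary1.6} by a standard ``send the radius to infinity'' argument, followed by plugging the resulting constant back into the equation. First I would note that each of the four hypotheses listed in Theorem~\ref{theorem1.8} is verbatim one of the four hypotheses of Corollary~\ref{corollary1.6}, and that in every one of them $a>0$ (indeed $a\ge 1$ in the last two cases). So suppose, for contradiction, that $v$ is a positive solution of \eqref{b=0} on all of $\mathbb{R}^n$. Fix an arbitrary point $x_0\in\mathbb{R}^n$; for every $R>0$ the ball $B(x_0,2R)$ is of course contained in $\mathbb{R}^n$, so Corollary~\ref{corollary1.6} applies and gives
$$\mathop{\sup}\limits_{B\left(x_0,\frac{R}{2}\right)}\frac{|\nabla v|^2}{v^2}\le\frac{\mathcal{C}}{R^2},$$
where $\mathcal{C}=\mathcal{C}(n,p,q,a)$ is independent of $R$ and of $v$.

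Next I would let $R\to\infty$. For any fixed $x\in\mathbb{R}^n$ and all sufficiently large $R$ one has $x\in B\!\left(x_0,\frac{R}{2}\right)$, hence $\dfrac{|\nabla v(x)|^2}{v(x)^2}\le\mathcal{C}/R^2\to 0$; since $v(x)>0$ this forces $\nabla v(x)=0$. As $x$ was arbitrary, $\nabla v\equiv 0$ on $\mathbb{R}^n$, so $v$ is a positive constant, say $v\equiv c$ with $c>0$.

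Finally I would substitute $v\equiv c$ into \eqref{b=0}. Since $\nabla v\equiv 0$, the $p$-Laplacian $\Delta_p v=\mathrm{div}\big(|\nabla v|^{p-2}\nabla v\big)$ vanishes identically, so the equation collapses to $ac^q=0$. But $c>0$ yields $c^q>0$, and $a>0$ in all four cases, so $ac^q>0$ — a contradiction. Therefore \eqref{b=0} admits no positive solution under the stated hypotheses.

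I do not expect a real obstacle here: all the analytic difficulty is already packaged in Corollary~\ref{corollary1.6} (and behind it Theorem~\ref{theorem1.1} and the Nash--Moser iteration). The only points that need care are bookkeeping ones: one must use that the constant $\mathcal{C}$ in Corollary~\ref{corollary1.6} depends only on $n,p,q,a$ and not on $R$ (which is exactly what that corollary asserts), and one should observe that the $p$-Laplacian of a constant is $0$, so no regularity or interpretation issue arises when we evaluate the equation on the limiting constant solution. Everything else is immediate.
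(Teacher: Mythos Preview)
Your proposal is correct and follows essentially the same approach as the paper: the paper states that the proof of Theorem~\ref{theorem1.8} is similar to that of Theorem~\ref{theorem1.7}, which applies the relevant gradient estimate, sends $R\to\infty$ to force $\nabla v\equiv 0$, and then observes that no positive constant solves the equation when $a\neq 0$. You carry out exactly this argument, correctly invoking Corollary~\ref{corollary1.6} (whose hypotheses match those of Theorem~\ref{theorem1.8} verbatim) in place of Corollary~\ref{corollary1.4}.
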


In the above conclusions, we always suppose that $\dim M=n\ge 3$. In fact, for the case $\dim M=2$ we can also obtain similar conclusions.
Since its proof is similar to the case $\dim M\ge 3$, we will not give the discussions for this case.

\subsection{Main ideas of proof and the organization of paper}\

In order to give the gradient estimates, we consider the linearized operator of $\mathcal{L}_{p}$ of $p$-Laplace operator at a solution $v$, and  let $\mathcal{L}_{p}$ act on an auxiliary function given by
$$F(v)=\frac{|\nabla v|^2}{(v+b)^\beta}, \quad \beta >0.$$
To choose such an auxiliary function is motivated by the gradient estimates established in \cite{WjW} when they considered another equation related to Ricci solitons. Then, we need to establish some suitable point-wise estimate of $\mathcal{L}_{p}(F)$ by the method or the techniques due to Cheng- Yau \cite{Ch-Y} and Yau \cite{3}, so that we can take a Nash-Moser iteration scheme to give the $L^\infty$-norm of $F(v)$. The Saloff-Coste's Sobolev inequalities play an important role in our arguments.
\medskip

Our paper is organized as follows: In Section 2, we first recall some preliminary and then establish some important lemmas, which will play a key role in the Nash-Moser iteration process. Next in Section 3, we prove some important gradient estimates, which is the main body of this paper. In Section 4, we give some necessary proofs of the main results.
\medskip

\section{\textbf{Preliminaries}}
Throughout this paper, we let $(M,g)$ be an $n$-dim Riemannian manifold $(n\ge3)$, and $\nabla$ denote the corresponding Levi-Civita connection to the metric $g$. We denote the volume form on $(M, g)$ by
$$d\mathrm{vol}=\sqrt{\det(g_{ij})}dx_1\wedge\cdots\wedge dx_n,$$
where $(x_1,\cdots,x_n)$ is a local coordinate chart, and for simplicity we usually omit the volume form of integral over $M$.

\begin{definition}
We say that $v\in C^1(M)\cap W_{loc}^{1,p}(M)$ is a weak solution of \eqref{1} if for all $\psi\in W_{0}^{1,p}(M)$ we have
\begin{align*}
\int_M{\left|\nabla v\right|^{p-2}\langle\nabla v,\nabla\psi\rangle}=\int_M{a(v+b)^q}\psi.
\end{align*}
\end{definition}

Next, we recall the Saloff-Coste's Sobolev inequalities (see \cite{14}), which shall play a key role in our proof of the main theorems.
\begin{lemma}(Saloff-Coste \cite{14})
Let $(M,g)$ be a complete manifold with $\mathrm{Ric}\ge-(n-1)\kappa$. For $n>2$, there exists a positive constant $C_n$ depending only on $n$, such that for all $B\subset M$ of radius $R$ and volume $V$ we have for $h_1\in C_0^\infty(B)$
\begin{align*}
\left\|h_1\right\|^2_{L^{\frac{2n}{n-2}}(B)}\le \exp\left\{C_n(1+\sqrt{\kappa}R)\right\}V^{-\frac{2}{n}}R^2\left(\int_B{\left|\nabla h_1\right|^2+R^{-2}h_1^2}\right).
\end{align*}
For $n=2$, the above inequality holds with n replaced by any fixed $n'>2$.
\end{lemma}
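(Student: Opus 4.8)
The plan is to deduce this local Sobolev inequality from two classical consequences of the Ricci lower bound --- a volume doubling property and a Neumann--Poincar\'e inequality --- together with the general principle that these two ingredients force a Sobolev inequality; this is the route of Saloff-Coste \cite{14}, which I would follow. First I would dispose of the scaling: when $\kappa>0$, replacing $g$ by $\kappa g$ turns the hypothesis into $\mathrm{Ric}\ge-(n-1)$ and turns $B$ into a ball of radius $\rho:=\sqrt\kappa R$, while the inequality to be proved rescales to $\|h_1\|_{L^{2n/(n-2)}(B)}^2\le e^{C_n(1+\rho)}\,V^{-2/n}\rho^2\!\int_B(|\nabla h_1|^2+\rho^{-2}h_1^2)$; thus the exponential factor is exactly what this normalization produces, and it suffices to treat $\kappa\in\{0,1\}$.

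Next I would record the two geometric inputs. By the Bishop--Gromov volume comparison theorem, for every $y\in M$ and $0<s\le t\le 2\rho$ one has $V(y,t)/V(y,s)\le C_n e^{C_n\rho}(t/s)^n$; in particular the Riemannian measure is doubling on scales $\le\rho$, with doubling constant $e^{C_n(1+\rho)}$ and, crucially, with polynomial growth of the \emph{correct} exponent $n$, the price of the non-Euclidean geometry being only the multiplicative $e^{C_n\rho}$. Second, by Buser's inequality (itself a consequence of the lower Ricci bound; one may also get it from the Cheeger--Colding segment inequality or from Li--Yau heat kernel bounds), for every ball $B(y,s)$ with $s\le\rho$ and $f\in C^\infty(\overline{B(y,s)})$ one has $\int_{B(y,s)}|f-f_{B(y,s)}|^2\le e^{C_n(1+\rho)}s^2\int_{B(y,s)}|\nabla f|^2$, where $f_{B(y,s)}$ denotes the average of $f$ over $B(y,s)$.

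Then I would invoke the standard mechanism turning volume doubling plus a scale-invariant $(2,2)$-Poincar\'e inequality into a $(2,2n/(n-2))$ Sobolev--Poincar\'e inequality on balls --- the Moser-type iteration of Saloff-Coste (equivalently, one first upgrades the two inputs to the two-sided Li--Yau Gaussian heat kernel bounds on $B$, then reads off a Nash inequality for the Dirichlet Laplacian on $B$ and dualizes). The point is that this iteration only ever compares a ball with itself at dyadic scales $2^{-j}\rho$, so the accumulated constant is a telescoping product: its polynomial part collapses to a fixed power of $2$ --- this is why the Sobolev exponent that survives is $2n/(n-2)$ rather than $2\nu/(\nu-2)$ with $\nu=\log_2(\text{doubling constant})$ --- while its exponential part sums, $\sum_j 2^{-j}\rho\lesssim\rho$, to $e^{C_n\rho}$; tracking homogeneities produces the factors $V^{-2/n}$ and $\rho^2$, and passing from the Poincar\'e form (involving $f_B$) to the form for $h_1\in C_0^\infty(B)$ produces, and absorbs $f_B$ into, the lower-order term $\rho^{-2}h_1^2$. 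Finally, when $n=2$, Bishop--Gromov and Buser hold with the true dimension $2$, and doubling with exponent $2$ is a fortiori doubling with exponent $n'$ for any fixed $n'>2$; running the same argument with target exponent $2n'/(n'-2)$ gives the stated inequality with $n$ replaced by $n'$.

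The genuine obstacle is the last implication --- extracting a Sobolev inequality with the \emph{sharp} exponent $2n/(n-2)$ and the clean exponential dependence on $\sqrt\kappa R$ out of volume doubling and a Poincar\'e inequality. A self-contained proof of this step needs either the equivalence machinery of heat-kernel analysis (Nash inequality $\Longleftrightarrow$ on-diagonal heat kernel upper bound $\Longleftrightarrow$ Faber--Krahn inequality, combined with the Li--Yau estimates) or the metric-measure-space version of Moser iteration; in either approach Buser's Poincar\'e inequality is the essential nontrivial geometric input, whereas the doubling property is elementary comparison geometry. Since the statement is precisely Saloff-Coste's theorem, in the paper itself I would simply cite \cite{14}.
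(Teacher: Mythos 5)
The paper does not prove this lemma at all: it is quoted verbatim from Saloff-Coste \cite{14}, which is exactly what you conclude one should do, so your proposal and the paper coincide on the only point the paper addresses. Your background sketch of how \cite{14} establishes the inequality (rescaling to $\kappa\in\{0,1\}$, Bishop--Gromov doubling with exponent $n$ up to a factor $e^{C_n\sqrt{\kappa}R}$, Buser's Poincar\'e inequality, and the Moser/heat-kernel machinery converting these into the local Sobolev inequality) is an accurate account of the standard argument and raises no issues.
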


Now we consider the linearization operator $\mathcal{L}_p$ of $p$-Laplace operator:
\begin{align}\label{2.1}
\mathcal{L}_p(\psi)=\mathrm{div}\left[f^{\frac{p}{2}-1}A\left(\nabla\psi\right)\right],
\end{align}
where $u=v+b$, $f=\left|\nabla u\right|^2$ and
\begin{align}\label{2.2}
A\left(\nabla\psi\right)=\nabla\psi+(p-2)f^{-1}\langle\nabla\psi,\nabla u\rangle\nabla u.
\end{align}
We first derive an useful expression of $\mathcal{L}_p(f)$.
\begin{lemma}\label{2.3}
The equality
\begin{align}
\mathcal{L}_p(f)=\left(\frac{p}{2}-1\right)f^{\frac{p}{2}-2}\left|\nabla f\right|^2+2f^{\frac{p}{2}-1}\left(\left|\nabla\nabla u\right|^2+\mathrm{Ric}\left(\nabla u,\nabla u\right)\right)+2\langle\nabla\Delta_pu,\nabla u\rangle
\end{align}
holds point-wisely in $\{x:~f(x)>0\}$.
\end{lemma}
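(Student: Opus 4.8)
\textbf{Proof proposal for Lemma \ref{2.3}.}

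The plan is to compute $\mathcal{L}_p(f)$ directly from the definition \eqref{2.1}--\eqref{2.2} by expanding the divergence and then invoking the Bochner formula. First I would substitute $\psi = f$ into \eqref{2.2} to obtain
$$A(\nabla f) = \nabla f + (p-2)f^{-1}\langle \nabla f,\nabla u\rangle\nabla u,$$
so that $f^{\frac p2 -1}A(\nabla f) = f^{\frac p2 -1}\nabla f + (p-2)f^{\frac p2 -2}\langle\nabla f,\nabla u\rangle\nabla u$. Taking the divergence term by term and using $\mathrm{div}(\phi\, X) = \langle\nabla\phi, X\rangle + \phi\,\mathrm{div}X$, the first term contributes $\left(\frac p2 -1\right)f^{\frac p2 -2}|\nabla f|^2 + f^{\frac p2 -1}\Delta f$. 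For the second term I would expand $\mathrm{div}\!\left(f^{\frac p2 -2}\langle\nabla f,\nabla u\rangle\nabla u\right)$, which produces three groups of terms: a gradient factor $\left(\frac p2 -2\right)f^{\frac p2 -3}\langle\nabla f,\nabla f\rangle\langle\nabla f,\nabla u\rangle$ (wait — more carefully, $\langle\nabla f^{\frac p2 -2},\nabla u\rangle\langle\nabla f,\nabla u\rangle$), a term $f^{\frac p2 -2}\langle\nabla\langle\nabla f,\nabla u\rangle,\nabla u\rangle$, and a term $f^{\frac p2 -2}\langle\nabla f,\nabla u\rangle\Delta u$. The key observation that collapses all of these is that, by definition of $\Delta_p$, one has $\Delta_p u = f^{\frac p2 -1}\Delta u + \left(\frac p2 -1\right)f^{\frac p2 -2}\langle\nabla f,\nabla u\rangle$ on $\{f>0\}$, and that $\mathrm{div}(f^{\frac p2 -1}A(\nabla u)) = \Delta_p u$ identically.

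The cleaner route, which I would actually carry out, exploits the identity $\langle\nabla\Delta_p u,\nabla u\rangle = \mathrm{div}\!\left(\Delta_p u\,\nabla u\right) - \Delta_p u\,\Delta u$ together with a direct verification that
$$\mathrm{div}\!\left(f^{\frac p2 -1}A(\nabla f)\right) - 2\,\mathrm{div}\!\left((\Delta_p u)\,\nabla u\right) = \left(\tfrac p2 -1\right)f^{\frac p2 -2}|\nabla f|^2 + f^{\frac p2 -1}\!\left(\Delta f - 2\langle\nabla\Delta u,\nabla u\rangle\right) + (\text{cross terms that cancel}).$$
Once the algebra is organized this way, the only genuinely geometric input is the Bochner–Weitzenböck formula
$$\tfrac12\Delta f = \tfrac12\Delta|\nabla u|^2 = |\nabla\nabla u|^2 + \langle\nabla u,\nabla\Delta u\rangle + \mathrm{Ric}(\nabla u,\nabla u),$$
which replaces $\Delta f$ and immediately yields the stated $2f^{\frac p2 -1}\!\left(|\nabla\nabla u|^2 + \mathrm{Ric}(\nabla u,\nabla u)\right)$ while the $\langle\nabla u,\nabla\Delta u\rangle$ pieces recombine, via $\Delta_p u = f^{\frac p2-1}\Delta u + (\frac p2-1)f^{\frac p2-2}\langle\nabla f,\nabla u\rangle$, into the single term $2\langle\nabla\Delta_p u,\nabla u\rangle$. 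Throughout, everything is legitimate because we work on the open set $\{f>0\}$ where $v$ (hence $u$) is smooth, so all derivatives exist classically and $f^{\frac p2 -1}$ etc. are smooth positive functions.

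The main obstacle I anticipate is purely bookkeeping: tracking the several $f^{\frac p2 -3}$- and $f^{\frac p2 -2}$-weighted cross terms involving $\langle\nabla f,\nabla u\rangle$, $\langle\nabla\langle\nabla f,\nabla u\rangle,\nabla u\rangle$, and Hessian contractions such as $\nabla^2 u(\nabla u,\nabla f)$, and checking that the $(p-2)$-dependent contributions coming from the anisotropic part of $A$ precisely match the $(p-2)$-dependent contributions hidden inside $2\langle\nabla\Delta_p u,\nabla u\rangle$ when the latter is expanded. A convenient way to keep this under control is to write $\nabla f = 2\nabla^2 u(\nabla u,\cdot)$ and $\langle\nabla f,\nabla u\rangle = 2\,\nabla^2 u(\nabla u,\nabla u)$ from the outset, so that every term is expressed through $\nabla u$ and $\nabla^2 u$; then the cancellations are visible by inspection rather than by miracle. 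No step requires anything beyond the product rule for divergence, the definition of $\Delta_p$, and the Bochner formula.
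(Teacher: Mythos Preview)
Your proposal is correct and follows essentially the same approach as the paper: expand $\mathrm{div}\bigl(f^{\frac p2-1}A(\nabla f)\bigr)$ using the product rule, separately expand $2\langle\nabla\Delta_p u,\nabla u\rangle$ from $\Delta_p u = f^{\frac p2-1}\Delta u + \bigl(\tfrac p2-1\bigr)f^{\frac p2-2}\langle\nabla f,\nabla u\rangle$, observe that the $(p-2)$-weighted cross terms (those in $f^{\frac p2-3}\langle\nabla f,\nabla u\rangle^2$, $f^{\frac p2-2}\langle\nabla\langle\nabla f,\nabla u\rangle,\nabla u\rangle$, and $f^{\frac p2-2}\langle\nabla f,\nabla u\rangle\Delta u$) coincide, and then apply the Bochner formula to convert $f^{\frac p2-1}\Delta f - 2f^{\frac p2-1}\langle\nabla\Delta u,\nabla u\rangle$ into $2f^{\frac p2-1}\bigl(|\nabla\nabla u|^2 + \mathrm{Ric}(\nabla u,\nabla u)\bigr)$. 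The paper does exactly this, only it organizes the bookkeeping by writing out the full expansions \eqref{2.5} and \eqref{2.6} side by side so the cancellation is immediate by comparison; your suggested detour through $\mathrm{div}(\Delta_p u\,\nabla u)$ is unnecessary.
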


\begin{proof}
By the definition of $A$ in \eqref{2.2}, we have
\begin{align}\label{2.4}
 A\left(\nabla f\right)=\nabla f+(p-2)f^{-1}\langle\nabla f,\nabla u\rangle\nabla u.
\end{align}
Combining \eqref{2.1} and \eqref{2.4} together, we obtain
\begin{align}
\begin{split}\label{2.5}
\mathcal{L}_p(f)=&\left(\dfrac{p}{2}-1\right)f^{\frac{p}{2}-2}|\nabla f|^2+f^{\frac{p}{2}-1}\Delta f+(p-2)\left(\dfrac{p}{2}-2\right)f^{\frac{p}{2}-3}\langle\nabla f,\nabla u\rangle^2\\
&+(p-2)f^{\frac{p}{2}-2}\langle\nabla\langle\nabla f,\nabla u\rangle,\nabla u\rangle+(p-2)f^{\frac{p}{2}-2}\langle\nabla f,\nabla u\rangle\Delta u.
\end{split}
\end{align}
On the other hand, by the definition of the $p$-Laplacian, we have
\begin{align}
\begin{split}\label{2.6}
2\langle\nabla \Delta_pu,\nabla u\rangle=&(p-2)\left(\dfrac{p}{2}-2\right)f^{\frac{p}{2}-3}\langle\nabla f,\nabla u\rangle^2+(p-2)f^{\frac{p}{2}-2}\langle\nabla\langle\nabla f,\nabla u\rangle,\nabla u\rangle\\
&+(p-2)f^{\frac{p}{2}-2}\langle\nabla f,\nabla u\rangle\Delta u+2f^{\frac{p}{2}-1}\langle\nabla \Delta u,\nabla u\rangle.
\end{split}
\end{align}
Hence, we combine \eqref{2.5} and \eqref{2.6} together to obtain
\begin{align}
\begin{split}\label{2.7}
\mathcal{L}_p(f)=\left(\dfrac{p}{2}-1\right)f^{\frac{p}{2}-2}|\nabla f|^2+f^{\frac{p}{2}-1}\Delta f+2\langle\nabla \Delta_pu,\nabla u\rangle-2f^{\frac{p}{2}-1}\langle\nabla \Delta u,\nabla u\rangle.
\end{split}
\end{align}
Furthermore, by \eqref{2.7} and the following Bochner formula
\begin{center}
$
\frac{1}{2}\Delta f=\left|\nabla\nabla u\right|^2+\mathrm{Ric}\left(\nabla u,\nabla u\right)+\langle\nabla \Delta u,\nabla u\rangle,
$
\end{center}
we get
\begin{center}
$
\mathcal{L}_p(f)=\left(\dfrac{p}{2}-1\right)f^{\frac{p}{2}-2}\left|\nabla f\right|^2+2f^{\frac{p}{2}-1}\left(\left|\nabla\nabla u\right|^2+\mathrm{Ric}\left(\nabla u,\nabla u\right)\right)+2\langle\nabla\Delta_pu,\nabla u\rangle.
$
\end{center}
Thus, we finish the proof.
\end{proof}

In the last section, we are going to use the following lemmas.

\begin{lemma}\label{lemma2.4}(\cite{James}, Theorem 4.1 ($a$))
Suppose $n>m$ and $s\in\left(m,~\frac{m(n-1)}{n-m}\right)$. Let $\omega$ be a non-negative weak solution of the differential inequality
\begin{align}\label{2.8}
\omega^{s-1}\le-\Delta_m\omega\le\Lambda \omega^{s-1}\quad in~\Omega,
\end{align}
for some constant $\Lambda>1$. ($\Omega$ is a domain in $\mathbb{R}^n$) Then there is a constant $\mathcal{C}=\mathcal{C}(n,m,s,\Lambda)>0$ such that
\begin{align}\label{2.9}
\mathop{\sup}\limits_{B_R}\omega(x)\le\mathcal{C}\mathop{\inf}\limits_{B_R}\omega(x).
\end{align}
($B_R$ denotes a ball with radius $R$.)
\end{lemma}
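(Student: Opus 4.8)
Following the scheme of Serrin--Zou, I would prove \eqref{2.9} by using the two halves of \eqref{2.8} for different purposes: the lower bound $\omega^{s-1}\le-\Delta_m\omega$ forces $-\Delta_m\omega\ge0$, so $\omega$ is a non-negative $m$-superharmonic function, and this will drive both an a priori bound and a weak Harnack inequality, while the upper bound $-\Delta_m\omega\le\Lambda\omega^{s-1}$ will drive a local $\sup$ bound via Moser iteration. The whole estimate is invariant under the rescaling $\omega\mapsto\omega_\lambda:=\lambda^{m/(s-m)}\omega(x_0+\lambda\,\cdot\,)$ (here $s>m$ makes the exponent finite and positive), so the final constant will automatically be independent of $R$.

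The first and main step is a scale-invariant a priori bound
$$\omega(x)\le C(n,m,s,\Lambda)\,\mathrm{dist}(x,\partial\Omega)^{-\frac{m}{s-m}},$$
so that, in particular, $\sup_{B_{2R}}\omega\le C R^{-m/(s-m)}$ when a slightly larger concentric ball lies in $\Omega$. I would prove this by contradiction: if it fails along a sequence of solutions, a doubling/point-selection argument together with the interior $C^{1,\alpha}$-estimates for $m$-Laplacian type equations produces, after rescaling as above, a limit $W$ with $0\le W$, $W\not\equiv0$ solving $W^{s-1}\le-\Delta_m W\le\Lambda W^{s-1}$ on all of $\mathbb{R}^n$; the strong minimum principle gives $W>0$, and then $-\Delta_m W\ge W^{s-1}$ contradicts the Liouville/non-existence theorem for positive $m$-superharmonic functions, which is applicable precisely because $s-1<\frac{n(m-1)}{n-m}$, i.e. $s<\frac{m(n-1)}{n-m}$.

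Granted the a priori bound, I would then test the weak form of $-\Delta_m\omega\le\Lambda\omega^{s-1}$ against $\eta^m\omega^\beta$ ($\beta>m-1$, $\eta$ a standard cut-off), absorb the gradient cross-term by Young's inequality, and apply the Euclidean Sobolev inequality to $w=\omega^{(\beta+m-1)/m}$; writing the nonlinear term as $\int\eta^m\omega^{\beta+m-1}\,\omega^{s-m}\le(\sup_{B_{2R}}\omega)^{s-m}\int\eta^m w^m\le C R^{-m}\int_{B_{2R}}w^m$ makes it harmless, and iterating the resulting reverse-H\"older inequalities over $\beta=\beta_k\to\infty$ yields $\sup_{B_R}\omega\le C(n,m,s,\Lambda,t)\big(R^{-n}\int_{B_{2R}}\omega^t\big)^{1/t}$ for every fixed $t>0$. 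On the other side, since $\omega$ is $m$-superharmonic, the classical weak Harnack inequality of Serrin and Trudinger supplies $t_0=t_0(n,m)\in(0,\frac{n(m-1)}{n-m})$ with $\big(R^{-n}\int_{B_{2R}}\omega^{t_0}\big)^{1/t_0}\le C(n,m)\inf_{B_R}\omega$. Taking $t=t_0$ and chaining the two estimates over a ball slightly larger than $B_R$ gives $\sup_{B_R}\omega\le\mathcal{C}\inf_{B_R}\omega$, which is \eqref{2.9}.

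The main obstacle is the a priori bound of the second paragraph: its proof hinges on the Liouville theorem for the differential inequality $-\Delta_m W\ge W^{s-1}$ on $\mathbb{R}^n$, and this is exactly the point where the subcriticality hypothesis $s<\frac{m(n-1)}{n-m}$ (equivalently, $s-1$ below the critical growth $\frac{n(m-1)}{n-m}$ for $m$-superharmonic functions) is indispensable, while $s>m$ is needed to make the rescaling well posed; without this bound, the nonlinear term in the Moser iteration cannot be reabsorbed and the argument does not close.
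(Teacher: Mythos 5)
The paper itself does not prove this lemma: it is quoted verbatim from Serrin--Zou (\cite{James}, Theorem 4.1(a)), so there is no in-paper argument to measure your proof against; the only meaningful comparison is with the cited source. Your reconstruction is sound and, in its second half, follows the same architecture as Serrin--Zou: the upper inequality $-\Delta_m\omega\le\Lambda\omega^{s-1}$ drives a Moser iteration in which the nonlinearity is made scale-critical by the universal bound (since $\omega^{s-1}=\omega^{m-1}\omega^{s-m}\le CR^{-m}\omega^{m-1}$ on $B_{2R}$), the lower inequality makes $\omega$ a non-negative $m$-supersolution so that the Serrin--Trudinger weak Harnack inequality applies for some $t_0<\frac{n(m-1)}{n-m}$, and chaining the two yields a constant depending only on $n,m,s,\Lambda$, with $R$-independence coming from the scaling $\omega\mapsto\lambda^{m/(s-m)}\omega(x_0+\lambda\,\cdot)$. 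The genuine divergence is in how the key universal estimate $\omega(x)\le C\,\mathrm{dist}(x,\partial\Omega)^{-m/(s-m)}$ is obtained: you use a doubling/point-selection blow-up reducing matters to the Liouville theorem for $-\Delta_mW\ge W^{s-1}$ on $\mathbb{R}^n$, valid exactly because $s-1<\frac{n(m-1)}{n-m}$, i.e. $s<\frac{m(n-1)}{n-m}$, whereas Serrin and Zou prove their universal boundedness theorem by direct integral (test-function) estimates combined with the weak Harnack inequality, essentially a quantitative version of the Liouville argument carried out on balls. Your route is shorter to state but non-quantitative (the constant arises from a compactness contradiction) and needs two small points of care that your sketch supports: compactness of the rescaled sequence holds because the two-sided inequality makes $-\Delta_m\omega_k$ a function in $L^\infty_{\mathrm{loc}}$, so interior $C^{1,\alpha}$ estimates apply to the inequality and not just to an equation, and the normalization $W(0)=1$ furnished by the point selection already contradicts the Liouville theorem, so the strong minimum principle is not really needed; the Serrin--Zou route, by contrast, gives an explicit constant and avoids any limit procedure. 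In both arguments the hypotheses $s>m$ (well-posed scaling) and $s<\frac{m(n-1)}{n-m}$ (Liouville/weak Harnack range) enter precisely where you say they do, so I see no gap in your proposal, only a methodological difference from the cited proof.
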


\begin{lemma}\label{lemma2.5}(\cite{James}, Theorem 4.3 ($a$)) Let $n\le m$. Then: Assume the hypotheses of Lemma \ref{lemma2.4}, except that the condition $s\in\left(m,~\frac{m(n-1)}{n-m}\right)$ is replaced by $s\in(1,+\infty)$, that is, $\frac{m(n-1)}{n-m}=+\infty$. Then \eqref{2.9} is valid with $\mathcal{C}=\mathcal{C}(n,m,s,\Lambda)>0$.
\end{lemma}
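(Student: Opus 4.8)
This lemma is Theorem~4.3(a) of Serrin and Zou \cite{James}, so the plan is to carry out the De~Giorgi--Nash--Moser scheme and, above all, to pin down the single point where the dimensional restriction of Lemma~\ref{lemma2.4} is used, so that one sees it evaporate when $n\le m$. I would begin by reading the chain \eqref{2.8} as two one-sided facts: $-\Delta_m\omega\ge\omega^{s-1}\ge 0$ says $\omega$ is a non-negative weak \emph{supersolution} of the $m$-Laplace equation, while $-\Delta_m\omega\le\Lambda\omega^{s-1}$ says $\omega$ is a non-negative weak \emph{subsolution} of $-\Delta_m\omega=\Lambda\omega^{s-1}$. Accordingly \eqref{2.9} will be assembled from (i) a local sup bound from the subsolution side, (ii) a weak-Harnack (infimum) bound from the supersolution side, and (iii) a logarithmic estimate to bridge the two exponents. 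Throughout, the decisive feature of $n\le m$ is that the Sobolev embedding $W^{1,m}_0(B)\hookrightarrow L^{q}(B)$ is available for \emph{every} finite $q$ (with $q$-dependent constant when $m=n$, and even $W^{1,m}_0(B)\hookrightarrow C^{0,1-n/m}$ when $m>n$), rather than only up to the critical exponent $\tfrac{nm}{n-m}$ as when $m<n$.

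For (ii), since $-\Delta_m\omega\ge 0$ and $\omega\ge 0$ on a ball, one invokes the classical weak Harnack inequality for non-negative $m$-superharmonic functions (Serrin, Trudinger): for suitable $p_0>0$ there is $C=C(n,m)$ with $\big(|B_{2\rho}|^{-1}\int_{B_{2\rho}}\omega^{p_0}\big)^{1/p_0}\le C\inf_{B_\rho}\omega$ whenever $B_{3\rho}\subset\Omega$. When $m<n$ the admissible $p_0$ are those below $\tfrac{n(m-1)}{n-m}$; when $n\le m$ every finite $p_0$ works, for exactly the Sobolev reason just noted. This is the first place $n\le m$ does real work.

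For (i), I would test the subsolution inequality against $\varphi=\eta^m\omega^{\beta}$ with $\beta\ge 1$ and $\eta$ a cutoff between concentric balls, integrate by parts, and use Young's inequality to absorb the $\nabla\eta$ terms; this yields a Caccioppoli inequality of the form $\int\eta^m\big|\nabla\big(\omega^{(\beta+m-1)/m}\big)\big|^m\le C\beta^{m-1}\big[(\rho-\rho')^{-m}\int_{B_\rho}\omega^{\beta+m-1}+\Lambda\int_{B_\rho}\eta^m\omega^{\beta+s-1}\big]$, after which the Sobolev inequality turns it into a reverse-Hölder chain $\|\omega\|_{L^{\chi k}(B_{\rho'})}\le\big(C k^{a}(\rho-\rho')^{-m}\big)^{1/k}\|\omega\|_{L^{k}(B_\rho)}$. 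The nonlinear term is the delicate one: writing $\omega^{\beta+s-1}=\omega^{\beta+m-1}\cdot\omega^{s-m}$ and interpolating between $L^{\beta+m-1}$ and a higher Lebesgue exponent, one can absorb $\int\eta^m\omega^{\beta+s-1}$ into the left side precisely when a Sobolev target exponent exceeding $\beta+s-1$ is available — for $m<n$ this forces $s$ below the threshold $\tfrac{m(n-1)}{n-m}$ (the restriction of Lemma~\ref{lemma2.4}), whereas for $n\le m$ all target exponents exist, so every $s\in(1,+\infty)$ is admissible. One then iterates $k\to\infty$ over geometrically shrinking radii; the product $\prod_k\big(Ck^{a}\big)^{1/\chi^k}$ converges when $m<n$ since $\chi=\tfrac{n}{n-m}>1$, and for $m=n$ one runs the analogous iteration with the borderline Sobolev inequality (target exponents growing linearly, or the exponential Trudinger--Moser form), again with a convergent product. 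The outcome is $\sup_{B_\rho}\omega\le C\|\omega\|_{L^{q_1}(B_{2\rho})}$ for some fixed $q_1=q_1(n,m,s)<\infty$.

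Finally, for (iii): in the general scheme one removes any exponent mismatch between (i) and (ii) by testing \eqref{2.8} against $\eta^m\omega^{1-m}$ (working with $\omega+\delta$ and letting $\delta\downarrow 0$) to get $\int\eta^m|\nabla\log\omega|^m<\infty$, hence $\log\omega\in\mathrm{BMO}$ by Poincaré and then John--Nirenberg, giving $\big(|B|^{-1}\int_B\omega^{p}\big)\big(|B|^{-1}\int_B\omega^{-p}\big)\le C$ for small $p>0$. In the present case $n\le m$ this bridge is essentially automatic, because (ii) holds for \emph{every} finite exponent: one simply combines $\sup_{B_\rho}\omega\le C\|\omega\|_{L^{q_1}(B_{2\rho})}$ with $\|\omega\|_{L^{q_1}(B_{2\rho})}\le C\inf_{B_\rho}\omega$ (after an inconsequential enlargement of radii) and chains. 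A standard Harnack-chain/covering argument then upgrades this to $\sup_{B_R}\omega\le\mathcal{C}\inf_{B_R}\omega$, i.e.\ \eqref{2.9}. The main obstacle is exactly the nonlinear term in step (i): for $m<n$ the estimate genuinely fails once $s$ passes the critical exponent, so the whole weakening to $s\in(1,+\infty)$ rests on having unrestricted Sobolev target exponents when $n\le m$; the real work is then the bookkeeping — controlling the exponent-dependence of the Sobolev constants (trivial for $m>n$, linear growth for $m=n$) and checking that the telescoping product of constants in the Moser iteration stays finite. Steps (ii) and (iii) are the classical superharmonic weak Harnack and John--Nirenberg arguments and introduce no new difficulty.
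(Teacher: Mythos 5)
The paper itself gives no proof of this lemma: it is quoted verbatim from Serrin--Zou \cite{James} (Theorem 4.3(a)), so your proposal has to be measured against their argument, and there is a genuine gap in it. The whole content of \eqref{2.9} is that $\mathcal{C}$ depends only on $(n,m,s,\Lambda)$ --- in particular neither on the radius $R$ nor on the solution $\omega$ --- and this universality is exactly what the present paper exploits later (Corollary \ref{corollary1.6}, Theorem \ref{theorem1.8}, where one lets $R\to\infty$). Your scheme (Caccioppoli plus Sobolev absorption on the subsolution side, weak Harnack on the supersolution side, a $\log$/BMO bridge) is the classical Serrin--Trudinger backbone, but as described it cannot deliver a universal constant, because the zero-order term $\omega^{s-1}$ is not homogeneous of degree $m-1$: writing $-\Delta_m\omega=c(x)\,\omega^{m-1}$ with $\omega^{s-m}\le c(x)\le\Lambda\,\omega^{s-m}$, the ``coefficient'' $c$ is a priori unbounded in terms of the data. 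Your absorption of $\int\eta^m\omega^{\beta+s-1}$ needs a bound on $\omega^{s-m}$ on the support of the cutoff, and any such bound re-introduces $\sup\omega$ (and, after scaling, $R$) into the constants; consequently the chain $\sup\le C\|\omega\|_{L^{q_1}}$ followed by $\|\omega\|_{L^{q_1}}\le C\inf$ ends with a ``constant'' depending on the solution, not $\mathcal{C}(n,m,s,\Lambda)$. The missing ingredient, which is the actual core of Serrin--Zou, is their \emph{universal boundedness estimate}: from the lower inequality $-\Delta_m\omega\ge\omega^{s-1}$ alone one first proves $\sup\omega\le C\,\mathrm{dist}(\cdot,\partial\Omega)^{-m/(s-m)}$ (a Keller--Osserman/test-function argument governed by the Serrin exponent), which turns $c$ into a coefficient of size $C\Lambda R^{-m}$ on the relevant ball; only then does the classical Harnack inequality for equations with scale-invariantly bounded coefficients give an $R$-free, solution-free constant. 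Your outline uses the lower inequality only for the weak Harnack step, never for a universal sup bound, so this step is absent.

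Relatedly, your localization of where the dimensional restriction enters is off. In terms of $q=s-1$, the condition $s<\frac{m(n-1)}{n-m}$ of Lemma \ref{lemma2.4} is exactly $q<\frac{n(m-1)}{n-m}$, i.e.\ the Serrin exponent controlling the universal estimate for supersolutions of $-\Delta_m\omega\ge\omega^{q}$; it is not the Sobolev-absorption threshold in the subsolution Moser iteration (that threshold would be the strictly larger critical exponent $\frac{mn}{n-m}$). So the role of $n\le m$ is that the Serrin exponent becomes infinite and the universal boundedness machinery applies for the whole stated range of $s$, not merely that ``all Sobolev target exponents exist''. Note also that the statement you are proving allows $s\le m$ when $n\le m$, where the reduction to a bounded-coefficient equation degenerates at points where $\omega$ is small ($\omega^{s-m}$ blows up there), so even the bounded-coefficient step requires an argument your sketch does not supply. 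As written, the proposal proves at best a Harnack inequality with a constant depending on $R$ and on $\omega$, which is strictly weaker than \eqref{2.9} and insufficient for the applications in this paper.
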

\medskip

\section{\textbf{Gradient estimates}}

\subsection{\textbf{Estimate for the linearized operator of $p$-Laplace}}\

First, we need to give the pointwise estimate of $\mathcal{L}_{p}(F)$, where $$F=\dfrac{f}{(v+b)^\beta},~\quad (\beta>0)$$ and $\mathcal{L}_{p}$ is the linearized operator of $p$-Laplacian at $u=v+b$.
\begin{lemma}\label{lemma3.1}The equality
\begin{align}
\begin{split}
    \mathcal{L}_p(F)=&u^{-\beta}\mathcal{L}_p(f)+\beta (\beta+1)(p-1)u^{-\beta-2}f^{\frac{p}{2}+1}-\beta \left(1+\frac{p}{2}\right)(p-1)u^{-\beta-1}f^{\frac{p}{2}-1}\langle\nabla f,\nabla u\rangle\\
    &-\beta (p-1)u^{-\beta-1}f^\frac{p}{2}\Delta u
\end{split}
\end{align}
holds point-wisely in $\{ x:~f(x)>0$ $\}$.
\end{lemma}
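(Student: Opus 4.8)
The plan is to compute $\mathcal{L}_p(F)$ directly from the definition \eqref{2.1}--\eqref{2.2}, exploiting the fact that, with $u$ and $f$ held fixed, the map $\psi\mapsto A(\nabla\psi)$ is linear in $\nabla\psi$, so that $\mathcal{L}_p$ behaves like a genuine second-order linear operator and a Leibniz rule is available. Writing $F=u^{-\beta}f$ with $u=v+b$ and $f=|\nabla u|^2$, I would first record the elementary identity
$$A(\nabla u)=\nabla u+(p-2)f^{-1}\langle\nabla u,\nabla u\rangle\nabla u=(p-1)\nabla u,$$
which is the algebraic fact that makes the whole computation collapse. Using $\nabla F=u^{-\beta}\nabla f-\beta u^{-\beta-1}f\,\nabla u$ and the linearity of $A$, this gives
$$f^{\frac p2-1}A(\nabla F)=u^{-\beta}f^{\frac p2-1}A(\nabla f)-\beta(p-1)u^{-\beta-1}f^{\frac p2}\nabla u,$$
so that $\mathcal{L}_p(F)=\mathrm{div}\big[u^{-\beta}f^{\frac p2-1}A(\nabla f)\big]-\beta(p-1)\,\mathrm{div}\big[u^{-\beta-1}f^{\frac p2}\nabla u\big]$.

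Next I would expand the two divergences with the product rule $\mathrm{div}(\varphi X)=\langle\nabla\varphi,X\rangle+\varphi\,\mathrm{div}\,X$. In the first term, the part $\mathrm{div}[f^{\frac p2-1}A(\nabla f)]$ is exactly $\mathcal{L}_p(f)$, and the remaining contribution involves $\langle\nabla u,A(\nabla f)\rangle$, which by the same algebra equals $(p-1)\langle\nabla f,\nabla u\rangle$; this produces $u^{-\beta}\mathcal{L}_p(f)-\beta(p-1)u^{-\beta-1}f^{\frac p2-1}\langle\nabla f,\nabla u\rangle$. In the second term, I would compute $\nabla(u^{-\beta-1}f^{\frac p2})=-(\beta+1)u^{-\beta-2}f^{\frac p2}\nabla u+\tfrac p2 u^{-\beta-1}f^{\frac p2-1}\nabla f$, pair it with $\nabla u$ using $|\nabla u|^2=f$, and add the term $u^{-\beta-1}f^{\frac p2}\Delta u$ coming from $\mathrm{div}\,\nabla u=\Delta u$. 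Collecting everything and combining the two $\langle\nabla f,\nabla u\rangle$-contributions (coefficients $1$ and $\tfrac p2$ add up to $1+\tfrac p2$) yields precisely the claimed identity.

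Since every manipulation only uses that $v$ is smooth and $f>0$ on the relevant set (so that $f^{\frac p2-1}$, $\nabla f$ and $\Delta u$ make classical sense), the computation is valid pointwise on $\{x:f(x)>0\}$, which is exactly the asserted domain. There is no genuine obstacle here; the only thing requiring care is the bookkeeping of signs and of the exponents of $u$ and $f$ when expanding the two divergences, and using $A(\nabla u)=(p-1)\nabla u$ consistently — it is what removes all the $(p-2)$-dependent pieces and leaves only clean $(p-1)$ factors. An essentially equivalent alternative route would be to apply the symmetric bilinear Leibniz rule $\mathcal{L}_p(\phi\eta)=\eta\mathcal{L}_p(\phi)+\phi\mathcal{L}_p(\eta)+2f^{\frac p2-1}\big[\langle\nabla\phi,\nabla\eta\rangle+(p-2)f^{-1}\langle\nabla\phi,\nabla u\rangle\langle\nabla\eta,\nabla u\rangle\big]$ with $\phi=f$ and $\eta=u^{-\beta}$, after first deriving the formula for $\mathcal{L}_p(u^{-\beta})$ from $A(\nabla u)=(p-1)\nabla u$; this simply splits the work into two shorter symmetric computations.
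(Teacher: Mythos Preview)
Your proposal is correct and follows essentially the same route as the paper: both compute $A(\nabla F)$ using linearity and $A(\nabla u)=(p-1)\nabla u$, split $\mathcal{L}_p(F)$ into the two divergences $\mathrm{div}[u^{-\beta}f^{\frac p2-1}A(\nabla f)]$ and $\mathrm{div}[u^{-\beta-1}f^{\frac p2}A(\nabla u)]$, and expand each with the product rule to recover $\mathcal{L}_p(f)$ and the remaining terms. The only cosmetic difference is that you substitute $A(\nabla u)=(p-1)\nabla u$ before taking the second divergence, whereas the paper does so after; the computations are otherwise identical.
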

\begin{proof}
By the definition of $A$ in \eqref{2.2}, we have
 \begin{align}
     \label{3.2}&A(\nabla F)=u^{-\beta}A(\nabla f)-\beta u^{-\beta-1}fA(\nabla u),\\
     \label{3.3}&A(\nabla u)=(p-1)\nabla u
     \end{align}
 and
 \begin{align}\label{3.4}
     A(\nabla f)=\nabla f+(p-2)f^{-1}\langle\nabla u,\nabla f\rangle\nabla u.
 \end{align}
 Combining \eqref{2.1} and \eqref{3.2} together, we obtain
 \begin{align}\label{3.5}
     \mathcal{L}_p(F)=\mathrm{div}\left[u^{-\beta}f^{\frac{p}{2}-1}A\left(\nabla f\right) \right]-\beta\mathrm{div}\left[u^{-\beta-1}f^{\frac{p}{2}}A\left(\nabla u\right) \right].
 \end{align}
 Direct computation shows that
 \begin{align}\label{3.6}
     \mathrm{div}\left[u^{-\beta}f^{\frac{p}{2}-1}A\left(\nabla f\right) \right]=-\beta u^{-\beta-1}f^{\frac{p}{2}-1}\langle A\left(\nabla f\right),\nabla u\rangle+u^{-\beta}\mathrm{div}\left[f^{\frac{p}{2}-1}A\left(\nabla f\right)\right]
 \end{align}
and
\begin{align}\label{3.7}
\begin{split}
\mathrm{div}\left[u^{-\beta-1}f^{\frac{p}{2}}A\left(\nabla u\right) \right]=&-(\beta+1)u^{-\beta-2}f^{\frac{p}{2}}\langle A\left(\nabla u\right),\nabla u\rangle+\frac{p}{2}u^{-\beta-1}f^{\frac{p}{2}-1}\langle A(\nabla u),\nabla f\rangle\\
&+u^{-\beta-1}f^{\frac{p}{2}}\mathrm{div}A\left(\nabla u\right).
\end{split}
\end{align}
By substituting \eqref{2.1} and \eqref{3.4} into \eqref{3.6}, we have
\begin{align}\label{3.8}
\mathrm{div}\left[u^{-\beta}f^{\frac{p}{2}-1}A\left(\nabla f\right) \right]=-\beta (p-1)u^{-\beta-1}f^{\frac{p}{2}-1}\langle\nabla f,\nabla u\rangle+u^{-\beta}\mathcal{L}_p(f).
\end{align}
Substituting \eqref{3.3} into \eqref{3.7} leads to
\begin{align}\label{3.9}
\begin{split}
\mathrm{div}\left[u^{-\beta-1}f^{\frac{p}{2}}A\left(\nabla u\right) \right]=&-(\beta+1)(p-1)u^{-\beta-2}f^{\frac{p}{2}+1}+\frac{p}{2}(p-1)u^{-\beta-1}f^{\frac{p}{2}-1}\langle \nabla f,\nabla u\rangle\\
&+(p-1)u^{-\beta-1}f^{\frac{p}{2}}\Delta u.
\end{split}
\end{align}
Now, we plug \eqref{3.8} and \eqref{3.9} into\eqref{3.5} to derive the required equality, and hence finish the proof of Lemma \ref{lemma3.1}.
\end{proof}

Using  Lemma \ref{2.3} and Lemma \ref{lemma3.1}, we can establish the following Lemma:
\begin{lemma}\label{lemma3.2} Let $v$ be a positive solution of equation \eqref{1} in $\Omega\subset M$. Then, the following
\begin{align}\label{3.10}
\begin{split}
\mathcal{L}_p(F)=&\left(\frac{p}{2}-1\right)u^\beta f^{\frac{p}{2}-2}|\nabla F|^2+2a\left[\frac{\beta}{2}(p-1)-q\right]u^{q-\beta-1}f-p\beta u^{-1}f^{\frac{p}{2}-1}\langle\nabla F,\nabla u\rangle\\
&+2u^{-\beta}f^{\frac{p}{2}-1}\left(\left|\nabla\nabla u\right|^2+\mathrm{Ric}\left(\nabla u,\nabla u\right)\right)+\left[-\frac{1}{2}p\beta^2+(p-1)\beta\right]u^{-\beta-2}f^{\frac{p}{2}+1}\\
\end{split}
\end{align}
holds point-wisely in $\{ x\in\Omega:~f(x)>0$ $\}$.
\end{lemma}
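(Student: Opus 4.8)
The plan is purely computational: substitute the two preceding identities into one another and then invoke the equation \eqref{1} to eliminate all occurrences of the $p$-Laplacian and of $\Delta u$. I would start from Lemma \ref{lemma3.1} and replace $\mathcal{L}_p(f)$ there by the expression furnished by Lemma \ref{2.3}. This produces, besides the terms $u^{-\beta}f^{\frac p2-2}|\nabla f|^2$ and $2u^{-\beta}f^{\frac p2-1}(|\nabla\nabla u|^2+\mathrm{Ric}(\nabla u,\nabla u))$, the boundary-type terms $2u^{-\beta}\langle\nabla\Delta_p u,\nabla u\rangle$, $\langle\nabla f,\nabla u\rangle$ and $\Delta u$ coming from Lemma \ref{lemma3.1}.

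Next I would use that $v$ solves \eqref{1}, which for $u=v+b$ reads $\Delta_p u=-au^q$; differentiating gives $\langle\nabla\Delta_p u,\nabla u\rangle=-aq\,u^{q-1}f$, so the term $2u^{-\beta}\langle\nabla\Delta_p u,\nabla u\rangle$ becomes $-2aq\,u^{q-\beta-1}f$. To get rid of $\Delta u$, I would expand $\Delta_p u=\mathrm{div}(f^{\frac p2-1}\nabla u)=f^{\frac p2-1}\Delta u+(\tfrac p2-1)f^{\frac p2-2}\langle\nabla f,\nabla u\rangle$, which yields the pointwise identity $f^{\frac p2-1}\Delta u=-au^q-(\tfrac p2-1)f^{\frac p2-2}\langle\nabla f,\nabla u\rangle$ on $\{f>0\}$; multiplying by $f$ and plugging in replaces the $\Delta u$ term by an $a u^{q}$ term plus another $\langle\nabla f,\nabla u\rangle$ term.

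The last step is to re-express everything through $F=fu^{-\beta}$. From $\nabla F=u^{-\beta}\nabla f-\beta u^{-\beta-1}f\nabla u$ one has $\nabla f=u^{\beta}\nabla F+\beta u^{-1}f\nabla u$, hence $\langle\nabla f,\nabla u\rangle=u^{\beta}\langle\nabla F,\nabla u\rangle+\beta u^{-1}f^{2}$ and $|\nabla f|^{2}=u^{2\beta}|\nabla F|^{2}+2\beta u^{\beta-1}f\langle\nabla F,\nabla u\rangle+\beta^{2}u^{-2}f^{3}$. After substituting these, I would collect the five groups of terms appearing in \eqref{3.10}: the coefficient of $u^{\beta}f^{\frac p2-2}|\nabla F|^{2}$ and of $u^{-\beta}f^{\frac p2-1}(|\nabla\nabla u|^{2}+\mathrm{Ric}(\nabla u,\nabla u))$ are read off immediately; the $u^{q-\beta-1}f$ contributions combine to $a\beta(p-1)-2aq=2a[\tfrac\beta2(p-1)-q]$; the $u^{-1}f^{\frac p2-1}\langle\nabla F,\nabla u\rangle$ contributions combine to $\beta[(p-2)-2(p-1)]=-p\beta$; and the four $u^{-\beta-2}f^{\frac p2+1}$ contributions $\beta^{2}(\tfrac p2-1)$, $\beta(\beta+1)(p-1)$, $-\beta^{2}(1+\tfrac p2)(p-1)$, $\beta^{2}(p-1)(\tfrac p2-1)$ sum to $-\tfrac12 p\beta^{2}+(p-1)\beta$. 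This is exactly \eqref{3.10}.

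There is no conceptual obstacle; the work is entirely in the algebraic bookkeeping, and the only point requiring care is tracking the signs and the powers of $f$ and $u$ when the $\Delta u$ term is converted and when $\nabla f$ is rewritten in terms of $\nabla F$. All identities are used pointwise on $\{x:f(x)>0\}$, where $v$ (hence $u$) is smooth, so the differentiations above are legitimate.
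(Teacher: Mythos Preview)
Your proposal is correct and follows essentially the same route as the paper: combine Lemma~\ref{2.3} with Lemma~\ref{lemma3.1}, use $\Delta_p u=-au^q$ (and its expansion $\Delta_p u=f^{\frac p2-1}\Delta u+(\tfrac p2-1)f^{\frac p2-2}\langle\nabla f,\nabla u\rangle$) to eliminate $\langle\nabla\Delta_p u,\nabla u\rangle$ and $\Delta u$, and then pass from $\nabla f$ to $\nabla F$ via the identities $\langle\nabla f,\nabla u\rangle=u^{\beta}\langle\nabla F,\nabla u\rangle+\beta u^{-1}f^{2}$ and $|\nabla f|^{2}=u^{2\beta}|\nabla F|^{2}+2\beta u^{\beta-1}f\langle\nabla F,\nabla u\rangle+\beta^{2}u^{-2}f^{3}$. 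The only cosmetic difference is that the paper first rewrites $\nabla f$ in terms of $\nabla F$ and afterwards substitutes for $\Delta u$, while you eliminate $\Delta u$ first; the bookkeeping and final coefficient collection are identical.
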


\begin{proof}
By summarizing Lemma \ref{2.3} and Lemma \ref{lemma3.1} we can achieve that
\begin{align}\label{3.11}
\begin{split}
\mathcal{L}_p(F)=&u^{-\beta}\left[\left(\frac{p}{2}-1\right)f^{\frac{p}{2}-2}\left|\nabla f\right|^2+2f^{\frac{p}{2}-1}\left(\left|\nabla\nabla u\right|^2+\mathrm{Ric}\left(\nabla u,\nabla u\right)\right)+2\langle\nabla\Delta_pu,\nabla u\rangle\right]\\
&+\beta (\beta+1)(p-1)u^{-\beta-2}f^{\frac{p}{2}+1}-\beta \left(1+\frac{p}{2}\right)(p-1)u^{-\beta-1}f^{\frac{p}{2}-1}\langle\nabla f,\nabla u\rangle\\
&-\beta (p-1)u^{-\beta-1}f^\frac{p}{2}\Delta u.
\end{split}
\end{align}
Since $F=\frac{f}{u^\beta}$, we can infer that
\begin{align*}
\nabla f=\beta u^{-1}f\nabla u+u^\beta\nabla F.
\end{align*}
Hence, we have
\begin{align}\label{3.12}
\langle\nabla f,\nabla u\rangle=\beta u^{-1}f^2+u^\beta\langle\nabla F,\nabla u\rangle
\end{align}
and
\begin{align}\label{3.13}
|\nabla f|^2=\beta^2u^{-2}f^3+2\beta u^{\beta-1}f\langle\nabla F,\nabla u\rangle+u^{2\beta}|\nabla F|^2.
\end{align}
Substituting $\Delta_p u+au^q=0$, \eqref{3.12} and \eqref{3.13} into \eqref{3.11}, we obtain
\begin{align}\label{3.14}
\begin{split}
\mathcal{L}_p(F)=&\left(\frac{p}{2}-1\right)u^\beta f^{\frac{p}{2}-2}|\nabla F|^2-2aqu^{q-\beta-1}f+2u^{-\beta}f^{\frac{p}{2}-1}\left(\left|\nabla\nabla u\right|^2+\mathrm{Ric}\left(\nabla u,\nabla u\right)\right)\\
&+\left[-\frac{1}{2}(p^2-2p+2)\beta^2+(p-1)\beta\right]u^{-\beta-2}f^{\frac{p}{2}+1}-\frac{1}{2}(p^2-p+2)\beta u^{-1}f^{\frac{p}{2}-1}\langle\nabla F,\nabla u\rangle\\
&-\beta(p-1)u^{-\beta-1}f^{\frac{p}{2}}\Delta u.
\end{split}
\end{align}
Since $\Delta_p u+au^q=0$ and
\begin{align*}
\Delta_pu=\mathrm{div}\left(f^{\frac{p}{2}-1}\nabla u\right)=\left(\frac{p}{2}-1\right)f^{\frac{p}{2}-2}\langle\nabla f,\nabla u\rangle+f^{\frac{p}{2}-1}\Delta u,
\end{align*}
it is easy to verify that
\begin{align}\label{3.15}
\Delta u=\left(1-\frac{p}{2}\right)f^{-1}\langle\nabla f,\nabla u\rangle-au^qf^{1-\frac{p}{2}}.
\end{align}
Substituting \eqref{3.12} into the above \eqref{3.15} yields
\begin{align}\label{3.16}
\Delta u=\left(1-\frac{p}{2}\right)u^\beta f^{-1}\langle\nabla F,\nabla u\rangle+\left(1-\frac{p}{2}\right)\beta u^{-1}f-au^qf^{1-\frac{p}{2}}.
\end{align}
Hence, we substitute \eqref{3.16} into \eqref{3.14} to derive the required equality. Thus we complete the proof of Lemma \ref{lemma3.2}.
\end{proof}

In order to establish the pointwise estimate of $\mathcal{L}_{p}(F^\alpha)$, we begin with the following equality about
$\mathcal{L}_p(F^\alpha)$.
\begin{lemma}\label{lemma3.3}
Let $\alpha>1$ and $v$ be a positive solution of equation \eqref{1} in $\Omega\subset M$. Then, the following
\begin{align}\label{3.17}
\begin{split}
\frac{1}{\alpha} F^{2-\alpha}\mathcal{L}_p(F^\alpha)=&\left(\alpha+\frac{p}{2}-2\right)f^{\frac{p}{2}-1}|\nabla F|^2+(\alpha-1)(p-2)f^{\frac{p}{2}-2}\langle\nabla F,\nabla u\rangle^2\\
&+2u^{-2\beta}f^{\frac{p}{2}}\left(\left|\nabla\nabla u\right|^2+\mathrm{Ric}\left(\nabla u,\nabla u\right)\right)-\beta pu^{-\beta-1}f^{\frac{p}{2}}\langle\nabla F,\nabla u\rangle\\
&+2a\left[\frac{\beta}{2}(p-1)-q\right]u^{q-2\beta-1}f^2+\left[-\frac{p}{2}\beta^2+(p-1)\beta\right]u^{-2\beta-2}f^{\frac{p}{2}+2}\\
\end{split}
\end{align}
holds point-wisely in $\{ x\in\Omega:~f(x)>0\}$.
\end{lemma}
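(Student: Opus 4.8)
The plan is to reduce the whole statement to Lemma~\ref{lemma3.2} via the chain rule, exploiting that the map $A$ defined in \eqref{2.2} is linear in its argument. First I would note that for $G=F^\alpha$ one has $\nabla G=\alpha F^{\alpha-1}\nabla F$, hence $A(\nabla G)=\alpha F^{\alpha-1}A(\nabla F)$ by linearity of $A$. Substituting into the definition \eqref{2.1} and expanding the divergence by the Leibniz rule gives
$$\mathcal{L}_p(F^\alpha)=\alpha F^{\alpha-1}\,\mathrm{div}\!\left[f^{\frac p2-1}A(\nabla F)\right]+\alpha(\alpha-1)F^{\alpha-2}f^{\frac p2-1}\langle\nabla F,A(\nabla F)\rangle=\alpha F^{\alpha-1}\mathcal{L}_p(F)+\alpha(\alpha-1)F^{\alpha-2}f^{\frac p2-1}\langle\nabla F,A(\nabla F)\rangle.$$
Then, reading off $\langle\nabla F,A(\nabla F)\rangle=|\nabla F|^2+(p-2)f^{-1}\langle\nabla F,\nabla u\rangle^2$ directly from \eqref{2.2} and multiplying through by $\tfrac1\alpha F^{2-\alpha}$, I obtain
$$\frac1\alpha F^{2-\alpha}\mathcal{L}_p(F^\alpha)=F\,\mathcal{L}_p(F)+(\alpha-1)f^{\frac p2-1}|\nabla F|^2+(\alpha-1)(p-2)f^{\frac p2-2}\langle\nabla F,\nabla u\rangle^2,$$
which already produces the second and third terms on the right-hand side of \eqref{3.17} together with part of the $|\nabla F|^2$ coefficient.

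Next I would substitute the formula for $\mathcal{L}_p(F)$ from Lemma~\ref{lemma3.2} and multiply it by $F=fu^{-\beta}$, tracking the powers of $u$ and $f$ in each of its five terms. The term $\left(\frac p2-1\right)u^\beta f^{\frac p2-2}|\nabla F|^2$ becomes $\left(\frac p2-1\right)f^{\frac p2-1}|\nabla F|^2$, which combines with the $(\alpha-1)f^{\frac p2-1}|\nabla F|^2$ above to give the coefficient $\alpha+\frac p2-2$; the source term acquires a factor $fu^{-\beta}$ and becomes $2a\left[\frac\beta2(p-1)-q\right]u^{q-2\beta-1}f^2$; the cross term becomes $-p\beta u^{-\beta-1}f^{\frac p2}\langle\nabla F,\nabla u\rangle$; the curvature–Hessian term becomes $2u^{-2\beta}f^{\frac p2}\!\left(|\nabla\nabla u|^2+\mathrm{Ric}(\nabla u,\nabla u)\right)$; and the last term becomes $\left[-\frac p2\beta^2+(p-1)\beta\right]u^{-2\beta-2}f^{\frac p2+2}$. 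Collecting everything yields exactly \eqref{3.17}. All the manipulations take place on the open set $\{x\in\Omega:f(x)>0\}$, where $F>0$ and $F^\alpha$ is smooth, so both the chain rule and Lemma~\ref{lemma3.2} are legitimate there.

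As for the main difficulty: there is no analytic obstacle at all — the lemma is a pure pointwise algebraic identity. The only places demanding care are (i) justifying that $A$ commutes with scalar multiplication by $F^{\alpha-1}$ and distributing the Leibniz rule over the divergence correctly, and (ii) keeping exact track of the exponents of $u$ and $f$ when multiplying Lemma~\ref{lemma3.2} through by $F=fu^{-\beta}$, since a single slip in an exponent would corrupt the whole identity. I would sanity-check the bookkeeping by noting that every term on both sides carries the same formal scaling weight in $(u,f)$.
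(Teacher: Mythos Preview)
Your proposal is correct and follows essentially the same route as the paper: both arguments apply the chain rule $A(\nabla F^\alpha)=\alpha F^{\alpha-1}A(\nabla F)$, expand the divergence via the Leibniz rule to obtain $\mathcal{L}_p(F^\alpha)=\alpha F^{\alpha-1}\mathcal{L}_p(F)+\alpha(\alpha-1)F^{\alpha-2}f^{\frac p2-1}\langle A(\nabla F),\nabla F\rangle$, evaluate $\langle A(\nabla F),\nabla F\rangle$ from \eqref{2.2}, and then insert the expression for $\mathcal{L}_p(F)$ from Lemma~\ref{lemma3.2}. Your term-by-term bookkeeping after multiplying \eqref{3.10} by $F=fu^{-\beta}$ is accurate.
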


\begin{proof}
By the definition of $A$ in \eqref{2.2}, we have
\begin{align}\label{3.18}
A\left(\nabla\left(F^\alpha\right)\right)=\alpha F^{\alpha-1}A(\nabla F)
\end{align}
and
\begin{align}\label{3.19}
\langle A(\nabla F),\nabla F\rangle=|\nabla F|^2+(p-2)f^{-1}\langle\nabla F,\nabla u\rangle^2.
\end{align}
Combining \eqref{2.1}, \eqref{3.18} and \eqref{3.19}, we obtain
\begin{align}\label{3.20}
\begin{split}
\mathcal{L}_p(F^\alpha)=&\alpha\mathrm{div}\left[F^{\alpha-1}f^{\frac{p}{2}-1}A(\nabla F)\right]\\
=&\alpha(\alpha-1)F^{\alpha-2}f^{\frac{p}{2}-1}\langle A(\nabla F),\nabla F\rangle+\alpha F^{\alpha-1}\mathcal{L}_p(F)\\
=&\alpha(\alpha-1)F^{\alpha-2}f^{\frac{p}{2}-1}\left[|\nabla F|^2+(p-2)f^{-1}\langle\nabla F,\nabla u\rangle^2\right]+\alpha F^{\alpha-1}\mathcal{L}_p(F).
\end{split}
\end{align}
In view of \eqref{3.10} and \eqref{3.20}, we can derive the required equality and complete the proof of Lemma \ref{lemma3.3}.
\end{proof}

Next, we need to consider the point-wise estimate of $\mathcal{L}_p(F^\alpha)$. Therefore, we begin with the following two Lemma.

\begin{lemma}\label{lemma3.4}
Let $a\neq 0$, $\alpha>1$ and $v$ be a positive solution of \eqref{1} in $\Omega\subset M$ with $\rm{Ric}\ge-(n-1)\kappa$. Set
\begin{align}\label{3.21}
H=\beta(p-1)\left[\frac{p-n}{2(n-1)}\beta+1\right]-\frac{\left[\frac{\beta}{2}\frac{n+1}{n-1}(p-1)-q\right]^2}{\frac{2}{n-1}-\frac{p-1}{(n-1)^2}\left[\alpha+\frac{p-n}{2(n-1)}\right]^{-1}}.
\end{align}
Then, the following
\begin{align}\label{3.22}
\frac{1}{\alpha} F^{2-\alpha}\mathcal{L}(F^\alpha)\ge-2(n-1)\kappa u^{-2\beta}f^{\frac{p}{2}+1}+(p-1)\beta\frac{p-n}{n-1}u^{-\beta-1}f^{\frac{p}{2}}\langle\nabla F,\nabla u\rangle+Hu^{-2\beta-2}f^{\frac{p}{2}+2}
\end{align}
holds point-wisely in $\{ x\in\Omega:~f(x)>0$ $\}$.
\end{lemma}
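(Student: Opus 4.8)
The plan is to lower-bound the right-hand side of the exact identity \eqref{3.17} of Lemma \ref{lemma3.3}; every term there is explicit except the Hessian--Ricci term $2u^{-2\beta}f^{\frac p2}\big(|\nabla\nabla u|^2+\mathrm{Ric}(\nabla u,\nabla u)\big)$, so this is where the work lies. For the Ricci part I would use the hypothesis $\mathrm{Ric}(\nabla u,\nabla u)\ge-(n-1)\kappa f$, which directly produces the term $-2(n-1)\kappa u^{-2\beta}f^{\frac p2+1}$ of \eqref{3.22}. For the Hessian I would use the refined Kato inequality: at a point of $\{f>0\}$, in an orthonormal frame with $e_1\parallel\nabla u$ one has $u_{11}=\frac{\langle\nabla f,\nabla u\rangle}{2f}$ and $\sum_k(u_{k1})^2=\frac{|\nabla f|^2}{4f}$, whence, after bounding the off-diagonal block by Cauchy--Schwarz,
\begin{align*}
|\nabla\nabla u|^2\ \ge\ \frac{|\nabla f|^2}{2f}-\frac{\langle\nabla f,\nabla u\rangle^2}{4f^2}+\frac1{n-1}\left(\Delta u-\frac{\langle\nabla f,\nabla u\rangle}{2f}\right)^2 .
\end{align*}

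Next I would express everything through $F$. Using \eqref{3.12}, \eqref{3.13}, \eqref{3.16} and the identity $\Delta u-\frac{\langle\nabla f,\nabla u\rangle}{2f}=\frac{1-p}{2}f^{-1}\langle\nabla f,\nabla u\rangle-au^qf^{1-\frac p2}$ (which follows from \eqref{3.15}), the lower bound for $2u^{-2\beta}f^{\frac p2}|\nabla\nabla u|^2$ becomes a polynomial in the three quantities $|\nabla F|^2$, $\langle\nabla F,\nabla u\rangle$ and $au^q$ with coefficients that are monomials in $u$ and $f$. Plugging this and the Ricci bound into \eqref{3.17} and collecting, I expect: the pure $u^{-2\beta-2}f^{\frac p2+2}$ terms to sum to $\beta(p-1)\big[\frac{p-n}{2(n-1)}\beta+1\big]u^{-2\beta-2}f^{\frac p2+2}$, the first part of $H$; the linear term in $\langle\nabla F,\nabla u\rangle$ to have coefficient precisely $(p-1)\beta\frac{p-n}{n-1}u^{-\beta-1}f^{\frac p2}$, which is kept as it stands in \eqref{3.22}; the linear-in-$au^q$ term to combine with its Kato counterpart into $2a\big[\frac{\beta}{2}\cdot\frac{n+1}{n-1}(p-1)-q\big]u^{q-2\beta-1}f^2$; and the rest to form a residual quadratic expression in $\langle\nabla F,\nabla u\rangle$ and $au^q$ together with the nonnegative term $(\alpha+\frac p2-1)f^{\frac p2-1}|\nabla F|^2$.

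To close, I would write $\nabla F=(\nabla F)^{\perp}+\frac{\langle\nabla F,\nabla u\rangle}{f}\nabla u$, so that $|\nabla F|^2=|(\nabla F)^{\perp}|^2+f^{-1}\langle\nabla F,\nabla u\rangle^2$; the term with $|(\nabla F)^{\perp}|^2$ has nonnegative coefficient $\alpha+\frac p2-1$ and is discarded, while all $\langle\nabla F,\nabla u\rangle^2$-contributions consolidate (via the identity $(\alpha+\frac p2-1)+(\alpha-1)(p-2)-\frac12+\frac{(p-1)^2}{2(n-1)}=(p-1)[\alpha+\frac{p-n}{2(n-1)}]$) into coefficient $(p-1)\big[\alpha+\frac{p-n}{2(n-1)}\big]$. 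After rescaling $\eta:=u^{\beta+1}f^{-2}\langle\nabla F,\nabla u\rangle$ and $\tau:=au^{q+1}f^{-\frac p2}$, the residual equals $u^{-2\beta-2}f^{\frac p2+2}\,Q(\eta,\tau)$, where
\begin{align*}
Q(\eta,\tau)=(p-1)\left[\alpha+\frac{p-n}{2(n-1)}\right]\eta^2+\frac{2(p-1)}{n-1}\,\tau\eta+\frac{2}{n-1}\,\tau^2+2\left(\frac{\beta}{2}\cdot\frac{n+1}{n-1}(p-1)-q\right)\tau .
\end{align*}
Since $Q(\eta,\tau)\ge\min_{\mathbb R^2}Q$ at every point, it suffices to minimize $Q$: first in $\eta$ (legitimate because $(p-1)[\alpha+\frac{p-n}{2(n-1)}]>0$), which leaves the coefficient $\frac{2}{n-1}-\frac{p-1}{(n-1)^2}[\alpha+\frac{p-n}{2(n-1)}]^{-1}$ in front of $\tau^2$, then in $\tau$ (legitimate because that coefficient is positive once $\alpha>1$). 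The minimum is exactly the second, negative, term in the definition of $H$, so $u^{-2\beta-2}f^{\frac p2+2}Q\ge\big(H-\beta(p-1)[\frac{p-n}{2(n-1)}\beta+1]\big)u^{-2\beta-2}f^{\frac p2+2}$, and adding back the pieces isolated above yields \eqref{3.22}; notice that neither the sign nor the non-vanishing of $a$ is in fact used in this estimate.

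The step I expect to be the main obstacle is the bookkeeping in the middle paragraph: expanding $\big(\Delta u-\frac{\langle\nabla f,\nabla u\rangle}{2f}\big)^2$ into its squares and cross terms, carrying all powers of $u$ and $f$ correctly through the substitutions \eqref{3.12}, \eqref{3.13} and \eqref{3.16}, and checking that the numerous contributions truly collapse to the clean coefficients above — in particular that, after rescaling, the $au^q\langle\nabla F,\nabla u\rangle$ cross term has coefficient exactly $\frac{2(p-1)}{n-1}$, since this is precisely what makes the two successive completions of the square reproduce the denominator appearing in $H$.
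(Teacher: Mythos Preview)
Your proposal is correct and follows essentially the same route as the paper: adapted orthonormal frame with $e_1\parallel\nabla u$, the Hessian lower bound $|\nabla\nabla u|^2\ge\sum u_{1i}^2+\frac{1}{n-1}(\sum_{i\ge2}u_{ii})^2$, substitution via \eqref{3.12}--\eqref{3.16}, dropping the $(\nabla F)^\perp$-contribution, and then absorbing the $a$-dependent terms by a quadratic minimisation. The only differences are cosmetic: you use the sharper Kato form $\frac{|\nabla f|^2}{2f}-\frac{\langle\nabla f,\nabla u\rangle^2}{4f^2}$ rather than the paper's $\frac{|\nabla f|^2}{4f}$ (the surplus is precisely the perpendicular $|\nabla F|^2$-term you later discard, so nothing is gained or lost), and you package the paper's two successive completions of the square \eqref{3.36}, \eqref{3.38} as a single minimisation of $Q(\eta,\tau)$ over $\mathbb{R}^2$ --- this is the same computation, and your observation that $a\ne0$ is never actually used is correct.
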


\begin{proof}
Let $\{e_1,\cdots,e_n\}$ be an orthonormal frame of $TM$ on a domain with $f\neq0$ such that $e_1=\dfrac{\nabla u}{|\nabla u|}$. We hence infer that
    \begin{align}\label{3.23}
        4\sum_{i=1}^nu_{1i}^2&=f^{-1}|\nabla f|^2,
    \end{align}
    \begin{align}\label{3.24}
        u_{11}=\frac{\langle\nabla f,\nabla u\rangle}{2f}
    \end{align}
    and
    \begin{align}\label{3.25}
        \Delta_pu=(p-1)f^{\frac{p}{2}-1}u_{11}+f^{\frac{p}{2}-1}\sum_{i=2}^nu_{ii}.
    \end{align}
Substituting \eqref{3.12} and \eqref{3.13} into \eqref{3.23} and \eqref{3.24} respectively leads to
\begin{align}\label{3.26}
4\sum_{i=1}^nu_{1i}^2=u^{2\beta}f^{-1}|\nabla F|^2+\beta^2u^{-2}f^2+2\beta u^{\beta-1}\langle\nabla F,\nabla u\rangle
\end{align}
and
\begin{align}\label{3.27}
2u_{11}=\beta u^{-1}f+u^\beta f^{-1}\langle\nabla F,\nabla u\rangle.
\end{align}
Combining $\Delta_p u+au^q=0$ and \eqref{3.25} together, we obtain
\begin{align}\label{3.28}
\left(\sum_{i=2}^nu_{ii}\right)^2=\left[(p-1)u_{11}+af^{1-\frac{p}{2}}u^q\right]^2.
\end{align}
By substituting \eqref{3.27} into \eqref{3.28}, we have
\begin{align}\label{3.29}
\left(\sum_{i=2}^nu_{ii}\right)^2=\left[\frac{p-1}{2}\beta u^{-1}f+\frac{p-1}{2}u^\beta f^{-1}\langle\nabla F,\nabla u\rangle+af^{1-\frac{p}{2}}u^q\right]^2.
\end{align}
By omitting some non-negative terms in $\left|\nabla\nabla u\right|^2$ and using Cauchy inequality, we arrive at
\begin{align}\label{3.30}
\left|\nabla\nabla u\right|^2\ge\sum_{i=1}^nu_{1i}^2+\sum_{i=2}^nu_{ii}^2\ge\sum_{i=1}^nu_{1i}^2+\frac{1}{n-1}\left(\sum_{i=2}^nu_{ii}\right)^2.
\end{align}
We plug \eqref{3.26} and \eqref{3.29} into \eqref{3.30} to obtain
\begin{align}\label{3.31}
\begin{split}
\left|\nabla\nabla u\right|^2\ge&\frac{1}{4}u^{2\beta}f^{-1}|\nabla F|^2+\frac{\beta^2}{4}u^{-2}f^2+\frac{\beta}{2} u^{\beta-1}\langle\nabla F,\nabla u\rangle\\
&+\frac{1}{n-1}\left[\frac{p-1}{2}\beta u^{-1}f+\frac{p-1}{2}u^\beta f^{-1}\langle\nabla F,\nabla u\rangle+af^{1-\frac{p}{2}}u^q\right]^2.
\end{split}
\end{align}
Hence, by expanding the last term of \eqref{3.31}, we obtain
\begin{align*}
\begin{split}
\left|\nabla\nabla u\right|^2\ge&\frac{1}{4}u^{2\beta}f^{-1}|\nabla F|^2+\frac{(p-1)^2}{4(n-1)}u^{2\beta}f^{-2}\langle\nabla F,\nabla u\rangle^2+\frac{\beta^2}{4}\left[1+\frac{(p-1)^2}{n-1}\right]u^{-2}f^2\\
&+\frac{\beta}{2}\left[1+\frac{(p-1)^2}{n-1}\right]u^{\beta-1}\langle\nabla F,\nabla u\rangle+\frac{a^2}{n-1}u^{2q}f^{2-p}+a\frac{p-1}{n-1}\beta u^{q-1}f^{2-\frac{p}{2}}\\
&+a\frac{p-1}{n-1}\beta u^{q-1}f^{2-\frac{p}{2}}+a\frac{p-1}{n-1}u^{q+\beta}f^{-\frac{p}{2}}\langle\nabla F,\nabla u\rangle.
\end{split}
\end{align*}
By using the above inequality and $\mathrm{Ric}\ge-(n-1)\kappa$, we have
\begin{align}\label{3.32}
\begin{split}
&2u^{-2\beta}f^{\frac{p}{2}}\left(\left|\nabla\nabla u\right|^2+\mathrm{Ric}\left(\nabla u,\nabla u\right)\right)\\
\ge&-2(n-1)\kappa u^{-2\beta}f^{\frac{p}{2}+1}+\frac{1}{2}f^{\frac{p}{2}-1}|\nabla F|^2+\beta\left[1+\frac{(p-1)^2}{n-1}\right]u^{-\beta-1}f^{\frac{p}{2}}\langle\nabla F,\nabla u\rangle\\
&+\frac{\beta^2}{2}\left[1+\frac{(p-1)^2}{n-1}\right]u^{-2\beta-2}f^{\frac{p}{2}+2}+\frac{(p-1)^2}{2(n-1)}f^{\frac{p}{2}-2}\langle\nabla F,\nabla u\rangle^2+\frac{2a^2}{n-1}u^{2q-2\beta}f^{2-\frac{p}{2}}\\
&+a\frac{2(p-1)}{n-1}\beta u^{q-2\beta-1}f^{2}+a\frac{2(p-1)}{n-1}u^{q-\beta}\langle\nabla F,\nabla u\rangle.
\end{split}
\end{align}
Substituting \eqref{3.32} into \eqref{3.17} yields
\begin{align}\label{3.33}
\begin{split}
\frac{1}{\alpha} F^{2-\alpha}\mathcal{L}_p(F^\alpha)\ge&-2(n-1)\kappa u^{-2\beta}f^{\frac{p}{2}+1}+2a\left[\frac{\beta}{2}\frac{n+1}{n-1}(p-1)-q\right]u^{q-2\beta-1}f^2\\
&+\left(\alpha+\frac{p}{2}-\frac{3}{2}\right)f^{\frac{p}{2}-1}|\nabla F|^2+\left[(\alpha-1)(p-2)+\frac{(p-1)^2}{2(n-1)}\right]f^{\frac{p}{2}-2}\langle\nabla F,\nabla u\rangle^2\\
&+(p-1)\beta\left[\frac{p-n}{2(n-1)}\beta+1\right]u^{-2\beta-2}f^{\frac{p}{2}+2}+(p-1)\beta\frac{p-n}{n-1}u^{-\beta-1}f^{\frac{p}{2}}\langle\nabla F,\nabla u\rangle\\
&+\frac{2a^2}{n-1}u^{2q-2\beta}f^{2-\frac{p}{2}}+a\frac{2(p-1)}{n-1}u^{q-\beta}\langle\nabla F,\nabla u\rangle.
\end{split}
\end{align}
By using $p>1$, $\alpha>1$ and
$$f^{\frac{p}{2}-1}|\nabla F|^2\ge f^{\frac{p}{2}-2}\langle\nabla F,\nabla u\rangle^2,$$
we arrive at
\begin{align}\label{3.34}
\begin{split}
&\left(\alpha+\frac{p}{2}-\frac{3}{2}\right)f^{\frac{p}{2}-1}|\nabla F|^2+\left[(\alpha-1)(p-2)+\frac{(p-1)^2}{2(n-1)}\right]f^{\frac{p}{2}-2}\langle\nabla F,\nabla u\rangle^2\\
\ge\,&(p-1)\left[\alpha+\frac{p-n}{2(n-1)}\right]f^{\frac{p}{2}-2}\langle\nabla F,\nabla u\rangle^2.
\end{split}
\end{align}
By substituting \eqref{3.34} into \eqref{3.33}, we obtain
\begin{align}\label{3.35}
\begin{split}
\frac{1}{\alpha} F^{2-\alpha}\mathcal{L}_p(F^\alpha)\ge&-2(n-1)\kappa u^{-2\beta}f^{\frac{p}{2}+1}+2a\left[\frac{\beta}{2}\frac{n+1}{n-1}(p-1)-q\right]u^{q-2\beta-1}f^2\\
&+(p-1)\left[\alpha+\frac{p-n}{2(n-1)}\right]f^{\frac{p}{2}-2}\langle\nabla F,\nabla u\rangle^2\\
&+(p-1)\beta\left[\frac{p-n}{2(n-1)}\beta+1\right]u^{-2\beta-2}f^{\frac{p}{2}+2}\\
&+(p-1)\beta\frac{p-n}{n-1}u^{-\beta-1}f^{\frac{p}{2}}\langle\nabla F,\nabla u\rangle+\frac{2a^2}{n-1}u^{2q-2\beta}f^{2-\frac{p}{2}}\\
&+a\frac{2(p-1)}{n-1}u^{q-\beta}\langle\nabla F,\nabla u\rangle.
\end{split}
\end{align}
Noting $\alpha>1$ and using the inequality $a_1^2-2a_1a_2\ge-a_2^2$, we infer
\begin{align}\label{3.36}
\begin{split}
&(p-1)\left[\alpha+\frac{p-n}{2(n-1)}\right]f^{\frac{p}{2}-2}\langle\nabla F,\nabla u\rangle^2+a\frac{2(p-1)}{n-1}u^{q-\beta}\langle\nabla F,\nabla u\rangle\\
\ge\,&-\frac{a^2}{(n-1)^2}(p-1)\left[\alpha+\frac{p-n}{2(n-1)}\right]^{-1}u^{2q-2\beta}f^{2-\frac{p}{2}}.
\end{split}
\end{align}
We substitute \eqref{3.36} into \eqref{3.35} to obtain
\begin{align}\label{3.37}
\begin{split}
\frac{1}{\alpha} F^{2-\alpha}\mathcal{L}(F^\alpha)\ge&-2(n-1)\kappa u^{-2\beta}f^{\frac{p}{2}+1}+2a\left[\frac{\beta}{2}\frac{n+1}{n-1}(p-1)-q\right]u^{q-2\beta-1}f^2\\
&+(p-1)\beta\left[\frac{p-n}{2(n-1)}\beta+1\right]u^{-2\beta-2}f^{\frac{p}{2}+2}+(p-1)\beta\frac{p-n}{n-1}u^{-\beta-1}f^{\frac{p}{2}}\langle\nabla F,\nabla u\rangle\\
&+\left\{\frac{2a^2}{n-1}-\frac{a^2}{(n-1)^2}(p-1)\left[\alpha+\frac{p-n}{2(n-1)}\right]^{-1}\right\}u^{2q-2\beta}f^{2-\frac{p}{2}}.
\end{split}
\end{align}
Noting $\alpha>1$ and using again the inequality $a_1^2-2a_1a_2\ge-a_2^2$, we obtain
\begin{align}\label{3.38}
\begin{split}
&\left\{\frac{2a^2}{n-1}-\frac{a^2}{(n-1)^2}(p-1)\left[\alpha+\frac{p-n}{2(n-1)}\right]^{-1}\right\}u^{2q-2\beta}f^{2-\frac{p}{2}}\\
&+2a\left[\frac{\beta}{2}\frac{n+1}{n-1}(p-1)-q\right]u^{q-2\beta-1}f^2\\
\ge &-\frac{\left[\frac{\beta}{2}\frac{n+1}{n-1}(p-1)-q\right]^2}{\frac{2}{n-1}-\frac{p-1}{(n-1)^2}
\left[\alpha+\frac{p-n}{2(n-1)}\right]^{-1}}u^{-2\beta-2}f^{\frac{p}{2}+2}.
\end{split}
\end{align}
Now, we plug \eqref{3.38} into \eqref{3.37} to deduce the desired inequality and hence complete the proof of Lemma \ref{lemma3.4}.
\end{proof}

\begin{lemma}\label{lemma3.5}
Let $\alpha>1$ and $v$ be a positive solution of equation \eqref{1} in $\Omega\subset M$ with $\rm{Ric}\ge-(n-1)\kappa$. Then, the following
\begin{align}\label{3.39}
\begin{split}
\frac{1}{\alpha} F^{2-\alpha}\mathcal{L}_p(F^\alpha)\ge&-2(n-1)\kappa u^{-2\beta}f^{\frac{p}{2}+1}+\beta(p-1)\frac{p-n}{n-1}u^{-\beta-1}f^\frac{p}{2}\langle\nabla F,\nabla u\rangle\\
&+2a\left[\frac{\beta}{2}\frac{n+1}{n-1}(p-1)-q\right]u^{q-2\beta-1}f^2+\frac{\beta}{2}(p-1)\left(2+\frac{p-n}{n-1}\beta\right)u^{-2\beta-2}f^{\frac{p}{2}+2}\\
\end{split}
\end{align}
holds point-wisely in $\{x\in\Omega:~f(x)>0\}$.
\end{lemma}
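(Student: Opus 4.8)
The plan is to reuse, almost verbatim, the chain of estimates already produced inside the proof of Lemma~\ref{lemma3.4}, stopping one step earlier and then disposing of the $a$-dependent cross terms differently. The first point to record is that inequality \eqref{3.35} was derived there without ever using the hypothesis $a\neq 0$: the Bochner formula, the refined Hessian bound $|\nabla\nabla u|^2\ge\sum_{i}u_{1i}^2+\tfrac{1}{n-1}\big(\sum_{i\ge 2}u_{ii}\big)^2$, the frame identities \eqref{3.23}--\eqref{3.27}, the substitution of $\Delta_p u+au^q=0$, and the Cauchy--Schwarz step \eqref{3.34} all remain valid word for word; one only needs the positivity of the coefficients $\alpha+\tfrac p2-\tfrac32$ and $\alpha+\tfrac{p-n}{2(n-1)}$, which follows from $\alpha>1$, $p>1$ and $n\ge 3$. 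So I would take \eqref{3.35} as the starting point here, now valid under the sole assumptions $\alpha>1$ and $\mathrm{Ric}\ge-(n-1)\kappa$ (in particular this covers $a=0$, which is exactly the generality gained over Lemma~\ref{lemma3.4}).

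From \eqref{3.35}, the remaining task is to show that the three terms coupling the gradient quantity $\xi:=\langle\nabla F,\nabla u\rangle$ with the parameter $a$, namely
\[
(p-1)\Big[\alpha+\tfrac{p-n}{2(n-1)}\Big]f^{\frac p2-2}\xi^{2}\;+\;a\,\tfrac{2(p-1)}{n-1}u^{q-\beta}\xi\;+\;\tfrac{2a^{2}}{n-1}u^{2q-2\beta}f^{2-\frac p2},
\]
constitute a pointwise non-negative expression, so that they may simply be discarded. Regarded as a quadratic polynomial in $\xi$ with positive leading coefficient $(p-1)\big[\alpha+\tfrac{p-n}{2(n-1)}\big]f^{\frac p2-2}$, its discriminant is $\le 0$ exactly when $\tfrac{(p-1)^{2}}{(n-1)^{2}}\le 4\cdot(p-1)\big[\alpha+\tfrac{p-n}{2(n-1)}\big]\cdot\tfrac{2}{n-1}$, which after simplification is the condition $\alpha\ge\tfrac12$; since $\alpha>1$, it holds. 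Equivalently, completing the square in $\xi$ leaves a residual term with coefficient $\tfrac{2}{n-1}-\tfrac{p-1}{(n-1)^{2}}\big[\alpha+\tfrac{p-n}{2(n-1)}\big]^{-1}$ --- exactly the quantity appearing in the denominator in \eqref{3.21} --- and this is $\ge 0$ iff $\alpha\ge\tfrac12$. This is precisely where the present argument departs from that of Lemma~\ref{lemma3.4}: there the square term was spent against the cross term alone, leaving a possibly negative $a^{2}$-coefficient that then had to be merged with the linear-in-$a$ term; here the full square and the full $a^{2}$-term together dominate the cross term, and consequently the linear term $2a\big[\tfrac{\beta}{2}\tfrac{n+1}{n-1}(p-1)-q\big]u^{q-2\beta-1}f^{2}$ survives untouched.

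After discarding the non-negative block above from \eqref{3.35} and rewriting the $u^{-2\beta-2}f^{\frac p2+2}$-coefficient via $(p-1)\beta\big[\tfrac{p-n}{2(n-1)}\beta+1\big]=\tfrac{\beta}{2}(p-1)\big(2+\tfrac{p-n}{n-1}\beta\big)$, what remains is precisely inequality \eqref{3.39}. I do not expect a genuine analytic obstacle: the only real content is the elementary quadratic-form positivity above, and the single thing demanding care is the bookkeeping --- checking line by line that the segment of the proof of Lemma~\ref{lemma3.4} leading to \eqref{3.35} is indeed valid without assuming $a\neq 0$, so that the present lemma is a true strengthening rather than a circular restatement.
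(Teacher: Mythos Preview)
Your proposal is correct, and the discriminant computation for the quadratic in $\xi=\langle\nabla F,\nabla u\rangle$ is accurate: the condition reduces to $\alpha\ge\tfrac12$, and you have correctly noted that $\alpha+\tfrac{p-n}{2(n-1)}>\tfrac12>0$ so the leading coefficient is positive. You are also right that nothing in the derivation of \eqref{3.35} uses $a\neq 0$.

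However, the paper takes a different route. Rather than starting from \eqref{3.35} and discarding the three $a$-coupled terms as a non-negative block, the paper returns to the earlier inequality \eqref{3.31} and applies the cruder bound $(a_1+a_2)^2\ge a_1^2+2a_1a_2$ to the squared sum with $a_1=\tfrac{p-1}{2}\beta u^{-1}f$. This prevents the $a^2$-term and the $a$-cross term from ever appearing, producing \eqref{3.41}--\eqref{3.43} directly; the remaining $|\nabla F|^2$ and $\langle\nabla F,\nabla u\rangle^2$ terms are then dropped via the same Cauchy--Schwarz step \eqref{3.44}. Your approach is more economical in that it recycles the chain already built for Lemma~\ref{lemma3.4} and isolates exactly where the two lemmas diverge; the paper's approach is more self-contained for this lemma and avoids the need to audit \eqref{3.35} for hidden dependence on $a\neq 0$. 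Both arrive at \eqref{3.39} with the same coefficient $(p-1)\beta\big[\tfrac{p-n}{2(n-1)}\beta+1\big]=\tfrac{\beta}{2}(p-1)\big(2+\tfrac{p-n}{n-1}\beta\big)$.
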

\begin{proof}
By using the inequality $(a_1+a_2)^2\ge a_1^2+2a_1a_2$, we have
\begin{align}\label{3.40}
\begin{split}
&\left[\frac{p-1}{2}\beta u^{-1}f+\frac{p-1}{2}u^\beta f^{-1}\langle\nabla F,\nabla u\rangle+af^{1-\frac{p}{2}}u^q\right]^2\\
\ge&\frac{(p-1)^2}{4}\beta^2u^{-2}f^2+(p-1)\beta u^{-1}f\left(\frac{p-1}{2}u^\beta f^{-1}\langle\nabla F,\nabla u\rangle+af^{1-\frac{p}{2}}u^q\right).
\end{split}
\end{align}
Substituting \eqref{3.40} into \eqref{3.31}, we have
\begin{align}\label{3.41}
\begin{split}
\left|\nabla\nabla u\right|^2\ge&\frac{1}{4}u^{2\beta}f^{-1}|\nabla F|^2+\frac{\beta^2}{4}\left[1+\frac{(p-1)^2}{n-1}\right]u^{-2}f^2+\frac{\beta}{2}\left[1+\frac{(p-1)^2}{n-1}\right]
u^{\beta-1}\langle\nabla F,\nabla u\rangle\\
&+a\beta\frac{p-1}{n-1}u^{q-1}f^{2-\frac{p}{2}}.
\end{split}
\end{align}
By using \eqref{3.41} and assumption $\mathrm{Ric}\ge-(n-1)\kappa$, we have
\begin{align}\label{3.42}
\begin{split}
&\,2u^{-2\beta}f^{\frac{p}{2}}\left(\left|\nabla\nabla u\right|^2+\mathrm{Ric}\left(\nabla u,\nabla u\right)\right)\\
\ge&-2(n-1)\kappa u^{-2\beta}f^{\frac{p}{2}+1}+\frac{1}{2}f^{\frac{p}{2}-1}|\nabla F|^2+\frac{\beta^2}{2}\left[1+\frac{(p-1)^2}{n-1}\right]u^{-2\beta-2}f^{\frac{p}{2}+2}\\
&+\beta\left[1+\frac{(p-1)^2}{n-1}\right] u^{-\beta-1}f^\frac{p}{2}\langle\nabla F,\nabla u\rangle+2a\beta\frac{p-1}{n-1}u^{q-2\beta-1}f^{2}.
\end{split}
\end{align}
We plug \eqref{3.42} into \eqref{3.17} to derive
\begin{align}\label{3.43}
\begin{split}
\frac{1}{\alpha} F^{2-\alpha}\mathcal{L}_p(F^\alpha)\ge&-2(n-1)\kappa u^{-2\beta}f^{\frac{p}{2}+1}+\beta(p-1)\frac{p-n}{n-1}u^{-\beta-1}f^\frac{p}{2}\langle\nabla F,\nabla u\rangle\\
&+\left(\alpha+\frac{p}{2}-\frac{3}{2}\right)f^{\frac{p}{2}-1}|\nabla F|^2+(\alpha-1)(p-2)f^{\frac{p}{2}-2}\langle\nabla F,\nabla u\rangle^2\\
&+2a\left[\frac{\beta}{2}\frac{n+1}{n-1}(p-1)-q\right]u^{q-2\beta-1}f^2
+\frac{\beta}{2}(p-1)\left(2+\frac{p-n}{n-1}\beta\right)u^{-2\beta-2}f^{\frac{p}{2}+2}.\\
\end{split}
\end{align}
Noting $p>1$, $\alpha>1$ and
$$f^{\frac{p}{2}-1}|\nabla F|^2\ge f^{\frac{p}{2}-2}\langle\nabla F,\nabla u\rangle^2,$$
we arrive at
\begin{align}\label{3.44}
\left(\alpha+\frac{p}{2}-\frac{3}{2}\right)f^{\frac{p}{2}-1}|\nabla F|^2+(\alpha-1)(p-2)f^{\frac{p}{2}-2}\langle\nabla F,\nabla u\rangle^2\ge\left(\alpha-\frac{1}{2}\right)(p-1)f^{\frac{p}{2}-2}\langle\nabla F,\nabla u\rangle^2.
\end{align}
In view of \eqref{3.43} and \eqref{3.44}, we can derive the desired inequality and complete the proof of Lemma \ref{lemma3.5}.
\end{proof}

By using Lemma \ref{lemma3.4} and Lemma \ref{lemma3.5}, we can achieve the following point-wise estimate of $\mathcal{L}_p(F^\alpha)$.
\begin{lemma}\label{lemma3.6}
Let $v$ be a positive solution of equation \eqref{1} in $\Omega\subset M$ with $\rm{Ric}\ge-(n-1)\kappa$. Set
\begin{align}\label{3.45}
h=\beta(p-1)\left[\frac{p-n}{(n-1)^2}\beta+\frac{2}{n-1}\right]
\end{align}
and suppose that $\beta$ satisfies the following condition
\begin{align}\label{3.46}
\begin{split}
\begin{cases}
\beta\in\left(0,~\frac{2(n-1)}{n-p}\right),&1<p<n,\\[3mm]
\beta\in(0,+\infty),&p\ge n.
\end{cases}
\end{split}
\end{align}
If the constants $a$, $p$ and $q$ satisfy one of the following two conditions
\begin{align}
\frac{\beta}{2}\cdot\frac{n+1}{n-1}(p-1)-h^{\frac{1}{2}}<q<\frac{\beta}{2}\cdot\frac{n+1}{n-1}(p-1)+h^{\frac{1}{2}},\quad (a\neq 0);
\end{align}
\begin{align}
a\left[\frac{\beta}{2}\cdot\frac{n+1}{n-1}(p-1)-q\right]\ge0,
\end{align}
then there exists $\alpha>1$ such that the following
\begin{align}\label{3.49}
\frac{1}{\alpha}\mathcal{L}(F^\alpha)\ge-2(n-1)\kappa u^{\left(\frac{p}{2}-1\right)\beta}F^{\alpha+\frac{p}{2}-1}+ Au^{\frac{p}{2}\beta-2}F^{\alpha+\frac{p}{2}}- Bu^{\frac{\beta}{2}(p-1)-1}F^{\alpha+\frac{p}{2}-\frac{3}{2}}|\nabla F|
\end{align}
holds point-wisely in $\{x\in\Omega:~f(x)>0\}$. Furthermore, $A$ is a positive constant and
$$B=(p-1)\beta\frac{|n-p|}{n-1}.$$
\end{lemma}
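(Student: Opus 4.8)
The plan is to split according to the two hypotheses on $(a,p,q)$: under the first one (the case $a\neq0$ with $q$ in the open band) I will invoke Lemma~\ref{lemma3.4}, and under the second one (which contains $a=0$) I will invoke Lemma~\ref{lemma3.5}; in both cases I will then finish with the same change of variables $f=Fu^{\beta}$ together with a single Cauchy--Schwarz estimate. Throughout one uses that $u=v+b>0$ on $\{f>0\}$, so that $f=Fu^{\beta}$ is legitimate and multiplication by positive powers of $u$ preserves inequalities. As a preliminary one records the algebraic identity
\[
\frac{\beta}{2}(p-1)\Bigl(2+\frac{p-n}{n-1}\beta\Bigr)=\beta(p-1)\Bigl[\frac{p-n}{2(n-1)}\beta+1\Bigr]=\frac{n-1}{2}\,h,
\]
and notes that condition \eqref{3.46} forces $1+\frac{p-n}{2(n-1)}\beta>0$ (trivial if $p\ge n$, equivalent to $\beta<\frac{2(n-1)}{n-p}$ if $1<p<n$), whence $h>0$.

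Consider first the case $a\bigl[\frac{\beta}{2}\frac{n+1}{n-1}(p-1)-q\bigr]\ge0$, including $a=0$. Here I start from the inequality of Lemma~\ref{lemma3.5}, which holds for every $\alpha>1$. Under this hypothesis the term $2a\bigl[\frac{\beta}{2}\frac{n+1}{n-1}(p-1)-q\bigr]u^{q-2\beta-1}f^{2}$ is nonnegative and may simply be discarded, and by the identity above the coefficient of $u^{-2\beta-2}f^{\frac p2+2}$ that survives equals $\frac{n-1}{2}h>0$. So one fixes any $\alpha>1$ and records a lower bound for $\frac1\alpha F^{2-\alpha}\mathcal{L}_p(F^\alpha)$ consisting of the Ricci term, the cross term $\beta(p-1)\frac{p-n}{n-1}u^{-\beta-1}f^{\frac p2}\langle\nabla F,\nabla u\rangle$, and $\frac{n-1}{2}h\,u^{-2\beta-2}f^{\frac p2+2}$.

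Now consider the case $a\neq0$ and $\frac{\beta}{2}\frac{n+1}{n-1}(p-1)-h^{\frac12}<q<\frac{\beta}{2}\frac{n+1}{n-1}(p-1)+h^{\frac12}$. Here I start from Lemma~\ref{lemma3.4}; the issue is to choose $\alpha>1$ large enough that the constant $H=H(\alpha)$ of \eqref{3.21} is positive. For every $\alpha>1$ one has $\alpha+\frac{p-n}{2(n-1)}\ge\frac{n+p-2}{2(n-1)}\ge\frac{p-1}{2(n-1)}>0$, so the denominator $\frac{2}{n-1}-\frac{p-1}{(n-1)^2}\bigl[\alpha+\frac{p-n}{2(n-1)}\bigr]^{-1}$ is positive; consequently $H(\alpha)$ is nondecreasing in $\alpha$, and, using the identity above, it converges as $\alpha\to\infty$ to
\[
\beta(p-1)\Bigl[\frac{p-n}{2(n-1)}\beta+1\Bigr]-\frac{n-1}{2}\Bigl[\frac{\beta}{2}\frac{n+1}{n-1}(p-1)-q\Bigr]^{2}=\frac{n-1}{2}\Bigl(h-\Bigl[\frac{\beta}{2}\frac{n+1}{n-1}(p-1)-q\Bigr]^{2}\Bigr).
\]
The band hypothesis is precisely the statement that $\bigl[\frac{\beta}{2}\frac{n+1}{n-1}(p-1)-q\bigr]^{2}<h$, i.e. that this limit is positive; hence $H(\alpha)>0$ for all sufficiently large $\alpha$, and one fixes such an $\alpha$.

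In either case one now has, with a constant $A>0$ (namely $A=\frac{n-1}{2}h$ in the first case, $A=H(\alpha)$ in the second), the pointwise bound
\[
\frac1\alpha F^{2-\alpha}\mathcal{L}_p(F^\alpha)\ge -2(n-1)\kappa\,u^{-2\beta}f^{\frac p2+1}+(p-1)\beta\frac{p-n}{n-1}\,u^{-\beta-1}f^{\frac p2}\langle\nabla F,\nabla u\rangle+A\,u^{-2\beta-2}f^{\frac p2+2}
\]
on $\{f>0\}$. Multiplying by $\alpha F^{\alpha-2}>0$, substituting $f=Fu^{\beta}$ in each monomial (so that $u^{-2\beta}f^{\frac p2+1}=u^{(\frac p2-1)\beta}F^{\frac p2+1}$, $u^{-2\beta-2}f^{\frac p2+2}=u^{\frac p2\beta-2}F^{\frac p2+2}$, and $u^{-\beta-1}f^{\frac p2}=u^{(\frac p2-1)\beta-1}F^{\frac p2}$), and finally estimating the cross term by $|\langle\nabla F,\nabla u\rangle|\le|\nabla F|\,|\nabla u|=F^{1/2}u^{\beta/2}|\nabla F|$, which turns it into $-B\,u^{\frac\beta2(p-1)-1}F^{\alpha+\frac p2-\frac32}|\nabla F|$ with $B=(p-1)\beta\frac{|n-p|}{n-1}$, one arrives at \eqref{3.49}. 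I expect the delicate point to be the computation of the limit of $H(\alpha)$ and its identification with $\frac{n-1}{2}\bigl(h-[\cdots]^{2}\bigr)$, which requires carrying the bulky expressions in \eqref{3.21} and \eqref{3.45} correctly and checking the positivity of the denominator for all $\alpha>1$; everything else is bookkeeping and one application of Cauchy--Schwarz.
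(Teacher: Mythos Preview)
Your proposal is correct and follows essentially the same route as the paper: case~(i) uses Lemma~\ref{lemma3.4} with $\alpha$ large enough to make $H>0$, case~(ii) uses Lemma~\ref{lemma3.5} and drops the nonnegative $a$-term, and both conclude by the Cauchy--Schwarz bound $|\langle\nabla F,\nabla u\rangle|\le f^{1/2}|\nabla F|$ together with the substitution $f=Fu^{\beta}$. One small slip: to pass from your displayed bound for $\frac{1}{\alpha}F^{2-\alpha}\mathcal{L}_p(F^\alpha)$ to \eqref{3.49} you should multiply by $F^{\alpha-2}$, not $\alpha F^{\alpha-2}$ (otherwise the constant $B$ would acquire an extra factor $\alpha$).
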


\begin{proof}
\textbf{Case $($\romannumeral 1$):$}
\begin{align*}
\frac{\beta}{2}\cdot\frac{n+1}{n-1}(p-1)-h^{\frac{1}{2}}<q<\frac{\beta}{2}\cdot\frac{n+1}{n-1}(p-1)+h^{\frac{1}{2}},\quad (a\neq 0).
\end{align*}
This implies
\begin{align*}
\lim_{\alpha\to+\infty}H>0,
\end{align*}
where $H$ is defined in \eqref{3.21}. Thus, we can choose $\alpha_0$ large enough such that for any $\alpha>\alpha_0$,
\begin{align}\label{3.50}
H>0.
\end{align}
Furthermore, we have
\begin{align}\label{3.51}
\begin{split}
    (p-1)\beta\frac{p-n}{n-1}u^{-\beta-1}f^{\frac{p}{2}}\langle\nabla F,\nabla u\rangle &\ge-(p-1)\beta\frac{|n-p|}{n-1}u^{-\beta-1}f^{\frac{p}{2}+\frac{1}{2}}|\nabla F|\\
&=-Bu^{-\beta-1}f^{\frac{p}{2}+\frac{1}{2}}|\nabla F|.
\end{split}
\end{align}
Combining \eqref{3.22} and \eqref{3.51} together, we can infer that
\begin{align}\label{3.52}
\frac{1}{\alpha} F^{2-\alpha}\mathcal{L}(F^\alpha)\ge-2(n-1)\kappa u^{-2\beta}f^{\frac{p}{2}+1}-Bu^{-\beta-1}f^{\frac{p}{2}+\frac{1}{2}}|\nabla F|+Hu^{-2\beta-2}f^{\frac{p}{2}+2},\quad(a\neq0).
\end{align}
Combining \eqref{3.50} and \eqref{3.52} together leads to \eqref{3.49}.
\medskip

\textbf{Case $($\romannumeral 2$):$}
\begin{align*}
a\left[\frac{\beta}{2}\frac{n+1}{n-1}(p-1)-q\right]\ge0.
\end{align*}
Under the above condition, we have
\begin{align}\label{3.53}
2a\left[\frac{\beta}{2}\frac{n+1}{n-1}(p-1)-q\right]u^{q-2\beta-1}f^2\ge0.
\end{align}
Combining \eqref{3.39}, \eqref{3.51} and \eqref{3.53}, we obtain
\begin{align}\label{3.54}
\begin{split}
\frac{1}{\alpha} F^{2-\alpha}\mathcal{L}_p(F^\alpha)\ge&-2(n-1)\kappa u^{-2\beta}f^{\frac{p}{2}+1}-Bu^{-\beta-1}f^{\frac{p}{2}+\frac{1}{2}}|\nabla F|\\
&+\frac{\beta}{2}(p-1)\left(2+\frac{p-n}{n-1}\beta\right)u^{-2\beta-2}f^{\frac{p}{2}+2}.\\
\end{split}
\end{align}
Since $\beta$ satisfies \eqref{3.46}, it is easy to see that
\begin{align*}
\frac{\beta}{2}(p-1)\left(2+\frac{p-n}{n-1}\beta\right)>0.
\end{align*}
Combining above, we complete the proof of Lemma \ref{lemma3.6}.
\end{proof}

\subsection{Deducing the main integral inequality}\

\begin{lemma}
Let $M$ be a complete manifold with $\rm{Ric}\ge-(n-1)\kappa$ and $v$ be a positive solution of equation \eqref{1} in $\Omega=B_R(x_0)\subset M$. Then, there exist constants $t$ large enough and $\mu_1>0$ such that
\begin{align*}
\begin{split}
&\exp\left\{-C_n(1+\sqrt{\kappa}R)\right\}V^\frac{2}{n}R^{-2}\left\|F^{\frac{p}{4}+\frac{\alpha-1}{2}
+\frac{t}{2}}\eta\right\|^2_{L^\frac{2n}{n-2}(\Omega)}+\mu t\phi_\beta^{\beta-2}\int_\Omega F^{t+\alpha+\frac{p}{2}}\eta^2\\
\le&\left[(n-1)\mu_1t\kappa+ \frac{1}{R^2}\right] \int_\Omega F^{t+\alpha+\frac{p}{2}-1}\eta^2+\mu_1\int_\Omega F^{t+\frac{p}{2}+\alpha-1}|\nabla\eta|^2,
\end{split}
\end{align*}
where $\eta\in C_0^\infty(\Omega,\mathbb{R})$ is non-negative function and $V$ is the volume of $B_R(x_0)$.
\end{lemma}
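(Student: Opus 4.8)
The plan is to combine the pointwise differential inequality of Lemma \ref{lemma3.6} with a Moser-type test-function argument, integrating against $F^{t}\eta^2$ for a large exponent $t$ and then invoking the Saloff-Coste Sobolev inequality to produce the $L^{2n/(n-2)}$-norm on the left-hand side. The first step is to multiply \eqref{3.49} by $F^{t}\eta^2$ (with $\eta\in C_0^\infty(\Omega)$ non-negative and $t$ a parameter to be fixed large) and integrate over $\Omega$. Since the equation is understood weakly and $F$ is only Lipschitz where $f=0$, one works on the open set $\{f>0\}$ and argues (as is standard for $p$-Laplace-type problems, cf. \cite{James,HWW}) that the degenerate set contributes nothing; this lets me treat the computation as if everything were smooth.

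Next I would integrate the $\mathcal{L}_p$-term by parts. By the definition \eqref{2.1}--\eqref{2.2} of $\mathcal{L}_p$ and the ellipticity inequality $\langle A(\nabla\psi),\nabla\psi\rangle\ge (p-1)f^{-1}\langle\nabla\psi,\nabla u\rangle^2$ together with $\langle A(\nabla\psi),\nabla\psi\rangle\le (p-1)|\nabla\psi|^2$ for $p>1$ (and the reverse bounds for $p<2$ handled by the usual $\min\{1,p-1\}$/$\max\{1,p-1\}$ constants), the term $\int_\Omega F^t\eta^2 \mathcal{L}_p(F^\alpha)$ produces, after moving the derivative onto $F^t\eta^2$, a good negative definite piece $-c\,t\int_\Omega F^{t-1+\alpha}f^{p/2-1}|\nabla F|^2\eta^2$ (the factor $t$ coming from differentiating $F^t$) plus a cross term $\int_\Omega F^{t+\alpha-1}f^{p/2-1}\langle\nabla F,\nabla\eta\rangle\eta$ that is absorbed by Cauchy--Schwarz into the good term plus $\mu_1\int_\Omega F^{t+p/2+\alpha-1}|\nabla\eta|^2$ (using $f^{p/2-1}\le C F^{(p/2-1)}\phi_\beta^{\cdots}$, i.e. comparing $f$ and $F$ via $f=F u^\beta$ and the monotone behaviour of $u^\beta$ captured by $\phi_\beta$). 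On the other side, the three terms on the right of \eqref{3.49}: the curvature term $-2(n-1)\kappa u^{(p/2-1)\beta}F^{\alpha+p/2-1}$ contributes $(n-1)\mu_1 t\kappa\int F^{t+\alpha+p/2-1}\eta^2$; the positive term $Au^{(p/2)\beta-2}F^{\alpha+p/2}$ contributes, after bounding $u^{(p/2)\beta-2}$ below by a constant times $\phi_\beta^{\beta-2}$ (this is precisely why $\phi_\beta$ is defined by $\sup$ for $\beta<2$, by $\inf$ for $\beta>2$, and equals $1$ for $\beta=2$), the desired term $\mu t\phi_\beta^{\beta-2}\int F^{t+\alpha+p/2}\eta^2$; and the gradient term $-Bu^{(\beta/2)(p-1)-1}F^{\alpha+p/2-3/2}|\nabla F|$ is handled by Young's inequality, splitting it into a small multiple of the good $|\nabla F|^2$-term (absorbed on the left, using $t$ large so the coefficient $ct$ dominates) and a term of the form $\int F^{t+\alpha+p/2}\eta^2$ with a coefficient that, thanks to the $\frac{1}{t}$ from Young, is small compared to $\mu t\phi_\beta^{\beta-2}$ and can be absorbed into the left-hand side for $t$ sufficiently large.

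After these absorptions I am left with
$$c\,t\int_\Omega F^{t-1+\alpha}f^{p/2-1}|\nabla F|^2\eta^2 + \mu t\phi_\beta^{\beta-2}\int_\Omega F^{t+\alpha+p/2}\eta^2 \le \Big[(n-1)\mu_1 t\kappa+R^{-2}\Big]\int_\Omega F^{t+\alpha+p/2-1}\eta^2 + \mu_1\int_\Omega F^{t+p/2+\alpha-1}|\nabla\eta|^2,$$
and the remaining step is to recognise the first integral, together with $\int|\nabla\eta|^2(\cdots)$, as controlling $\int|\nabla(F^{(p/4)+(\alpha-1)/2+t/2}\eta)|^2$: indeed $|\nabla(F^s\eta)|^2\le 2s^2 F^{2s-2}|\nabla F|^2\eta^2 + 2F^{2s}|\nabla\eta|^2$, and with $2s-2 = t-1+\alpha+(p/2-1)$, i.e. $s=\frac{p}{4}+\frac{\alpha-1}{2}+\frac{t}{2}$, the exponent matches after again comparing $f^{p/2-1}$ with $F^{p/2-1}$ up to $\phi_\beta$-powers (which can be folded into the constants). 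Applying the Saloff-Coste Sobolev inequality (Lemma in Section 2) to $h_1=F^{s}\eta$ then converts $\int(|\nabla h_1|^2+R^{-2}h_1^2)$ into $\exp\{-C_n(1+\sqrt\kappa R)\}V^{2/n}R^{-2}\|h_1\|^2_{L^{2n/(n-2)}}$, which is exactly the first term on the left of the claimed inequality; the $R^{-2}h_1^2$ contribution merges into the $R^{-2}\int F^{t+\alpha+p/2-1}\eta^2$ term on the right. I expect the main obstacle to be bookkeeping: tracking the precise dependence of all constants on $t$ so that every "bad" term genuinely gets absorbed once $t$ is chosen large (this is where the hypotheses of Lemma \ref{lemma3.6} guaranteeing $A>0$ are essential), and being careful about the sign conventions and the $p<2$ versus $p\ge2$ dichotomy in the ellipticity constants for $\mathcal{L}_p$; the Sobolev step and the comparison between $f$ and $F$ are routine once these are pinned down.
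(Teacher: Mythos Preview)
Your broad strategy (test with a power of $F$, integrate $\mathcal{L}_p$ by parts, absorb via Cauchy--Young, then apply Saloff--Coste) is the right one, but there is a genuine gap in your choice of test function that makes the argument break down as written.

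You multiply \eqref{3.49} by $F^t\eta^2$ only. Every term in \eqref{3.49} and in the integration-by-parts output carries a stray power of $u$: the good piece is $t\int u^{(p/2-1)\beta}F^{t+\alpha+p/2-3}|\nabla F|^2\eta^2$ (since $f^{p/2-1}=F^{p/2-1}u^{(p/2-1)\beta}$), the curvature and $|\nabla\eta|^2$ terms carry the same factor $u^{(p/2-1)\beta}$, and the $A$-term carries $u^{p\beta/2-2}$, \emph{not} $u^{\beta-2}$. Your proposal to ``fold the $\phi_\beta$-powers into the constants'' cannot work: to keep the $|\nabla F|^2$-term good you would need a \emph{lower} bound on $u^{(p/2-1)\beta}$, while to control the curvature and $|\nabla\eta|^2$ terms you need an \emph{upper} bound on the same quantity, so any attempt introduces an uncontrolled ratio $(\sup u/\inf u)^{|(p/2-1)\beta|}$---precisely the Harnack quantity you have no a priori control on. Your claim that $u^{(p/2)\beta-2}$ is bounded below by a constant times $\phi_\beta^{\beta-2}$ is also wrong unless $p=2$: the exponents simply do not match.

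The missing idea is to weight the test function by an additional factor $u^\lambda$, i.e.\ test with $F^t_\varepsilon\eta^2\,u^\lambda$, and then choose $\lambda=\beta(1-\tfrac{p}{2})$. This choice kills the $u^{(p/2-1)\beta}$ factor on the $|\nabla F|^2$, curvature, and $|\nabla\eta|^2$ integrals simultaneously, and converts the exponent on the $A$-term from $p\beta/2-2$ to $\beta-2$, after which the bound $u^{\beta-2}\ge\phi_\beta^{\beta-2}$ is legitimate by the very definition of $\phi_\beta$. The price is one extra term in the integration by parts (coming from $\nabla u^\lambda$), namely $(p-1)\lambda\int u^{(p/2-1)\beta+\lambda-1}F^{\cdots}\langle\nabla F,\nabla u\rangle\eta^2$, but this is of the same shape as the $B$-term and is absorbed the same way. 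With this correction the rest of your outline (the $p\ge2$ versus $p<2$ split, Young's inequality for the $B$-term using largeness of $t$, and the Sobolev step) goes through exactly as you describe.
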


\begin{proof}
Now we choose a geodesic ball $\Omega=B_R(x_0)\subset M$. If we choose a test function $\xi\cdot u^\lambda=F^t_\varepsilon\eta^2\cdot u^\lambda$ where $\eta\in C_0^\infty(\Omega,\mathbb{R})$ is non-negative, $F_\varepsilon=(F-\varepsilon)^+$, $\varepsilon>0$, $t>1$ and $\lambda\in\mathbb{R}$ are to be determined later. It follows from \eqref{2.1} that
\begin{align*}
    &\frac{1}{\alpha}\int_\Omega\mathcal{L}_p(F^\alpha)\cdot \xi \cdot u^\lambda\\
    =&-\int_\Omega\left\langle\nabla(\xi u^\lambda),f^{\frac{p}{2}-1}F^{\alpha-1}\left[\nabla F+(p-2)f^{-1}\langle\nabla u,\nabla F\rangle\nabla u\right]\right\rangle\\
    =&-\int_\Omega f^{\frac{p}{2}-1}F^{\alpha-1}u^\lambda\langle\nabla F,\nabla \xi\rangle-\lambda\int_\Omega u^{\lambda-1}f^{\frac{p}{2}-1}F^{\alpha-1}\langle\nabla F,\nabla u\rangle\xi\\
    &-(p-2)\int_\Omega f^{\frac{p}{2}-2}F^{\alpha-1}u^\lambda\langle\nabla F,\nabla u\rangle\langle\nabla u,\nabla\xi\rangle-(p-2)\lambda\int_\Omega f^{\frac{p}{2}-1}F^{\alpha-1}u^{\lambda-1}\langle\nabla F,\nabla u\rangle\xi\\
    =&-\int_\Omega f^{\frac{p}{2}-1}F^{\alpha-1}u^\lambda\langle\nabla F,\nabla \xi\rangle-(p-1)\lambda\int_\Omega f^{\frac{p}{2}-1}F^{\alpha-1}u^{\lambda-1}\langle\nabla F,\nabla u\rangle\xi\\
    &-(p-2)\int_\Omega f^{\frac{p}{2}-2}F^{\alpha-1}u^\lambda\langle\nabla F,\nabla u\rangle\langle\nabla u,\nabla\xi\rangle.
\end{align*}
Since $\xi=F^t_\varepsilon\eta^2$, we can achieve that
\begin{align}\label{3.55}
    \begin{split}
        &\frac{1}{\alpha}\int_\Omega\mathcal{L}_p(F^\alpha)\cdot F^t_\varepsilon\eta^2 \cdot u^\lambda\\
        =&-\int_\Omega f^{\frac{p}{2}-1}F^{\alpha-1}u^\lambda\langle\nabla F,tF^{t-1}_\varepsilon\eta^2\nabla F+2F^t_\varepsilon\eta\nabla\eta\rangle-(p-1)\lambda\int_\Omega f^{\frac{p}{2}-1}F^{\alpha-1}u^{\lambda-1}\langle\nabla F,\nabla u\rangle F^t_\varepsilon\eta^2\\
    &-(p-2)\int_\Omega f^{\frac{p}{2}-2}F^{\alpha-1}u^\lambda\langle\nabla F,\nabla u\rangle\langle\nabla u,tF^{t-1}_\varepsilon\eta^2\nabla F+2F^t_\varepsilon\eta\nabla\eta\rangle\\
     =&-t\int_\Omega u^{\left(\frac{p}{2}-1\right)\beta+\lambda}F^{\frac{p}{2}+\alpha-2}F^{t-1}_\varepsilon |\nabla F|^2\eta^2-2\int_\Omega u^{\left(\frac{p}{2}-1\right)\beta+\lambda}F^{\frac{p}{2}+\alpha-2}F^{t}_\varepsilon\langle\nabla F,\nabla\eta\rangle\eta\\
    &-(p-1)\lambda\int_\Omega u^{\left(\frac{p}{2}-1\right)\beta+\lambda-1}F^{\frac{p}{2}+\alpha-2}F^{t}_\varepsilon\langle\nabla F,\nabla u\rangle\eta^2\\
    &-(p-2)t\int_\Omega u^{\left(\frac{p}{2}-2\right)\beta+\lambda}F^{\frac{p}{2}+\alpha-3}F^{t-1}_\varepsilon\langle\nabla F,\nabla u\rangle^2\eta^2\\
    &-2(p-2)\int_\Omega u^{\left(\frac{p}{2}-2\right)\beta+\lambda}F^{\frac{p}{2}+\alpha-3}F^{t}_\varepsilon\langle\nabla F,\nabla u\rangle\langle\nabla u,\nabla\eta\rangle\eta.
    \end{split}
\end{align}
Combining \eqref{3.49} and \eqref{3.55}, we can achieve the following inequality.
\begin{align*}
        &-t\int_\Omega u^{\left(\frac{p}{2}-1\right)\beta+\lambda}F^{\frac{p}{2}+\alpha-2}F^{t-1}_\varepsilon |\nabla F|^2\eta^2-2\int_\Omega u^{\left(\frac{p}{2}-1\right)\beta+\lambda}F^{\frac{p}{2}+\alpha-2}F^{t}_\varepsilon\langle\nabla F,\nabla\eta\rangle\eta\\
    &-(p-1)\lambda\int_\Omega u^{\left(\frac{p}{2}-1\right)\beta+\lambda-1}F^{\frac{p}{2}+\alpha-2}F^{t}_\varepsilon\langle\nabla F,\nabla u\rangle\eta^2\\
    &-(p-2)t\int_\Omega u^{\left(\frac{p}{2}-2\right)\beta+\lambda}F^{\frac{p}{2}+\alpha-3}F^{t-1}_\varepsilon\langle\nabla F,\nabla u\rangle^2\eta^2\\
    &-2(p-2)\int_\Omega u^{\left(\frac{p}{2}-2\right)\beta+\lambda}F^{\frac{p}{2}+\alpha-3}F^{t}_\varepsilon\langle\nabla F,\nabla u\rangle\langle\nabla u,\nabla\eta\rangle\eta\\
    \ge&-2(n-1)\kappa \int_\Omega u^{\left(\frac{p}{2}-1\right)\beta+\lambda}F^{\alpha+\frac{p}{2}-1}F^t_\varepsilon\eta^2+ A\int_\Omega u^{\frac{p}{2}\beta-2+\lambda}F^{\alpha+\frac{p}{2}}F^t_\varepsilon\eta^2\\
    &- B\int_\Omega u^{\frac{\beta}{2}(p-1)-1+\lambda}F^{\alpha+\frac{p}{2}-\frac{3}{2}}|\nabla F|F^t_\varepsilon\eta^2.
\end{align*}
By using the above inequality, we can obtain the following
\begin{align}\label{3.57}
\begin{split}
&2(n-1)\kappa \int_\Omega u^{\left(\frac{p}{2}-1\right)\beta+\lambda}F^{\alpha+\frac{p}{2}-1}F^t_\varepsilon\eta^2-2\int_\Omega u^{\left(\frac{p}{2}-1\right)\beta+\lambda}F^{\frac{p}{2}+\alpha-2}F^{t}_\varepsilon\langle\nabla F,\nabla\eta\rangle\eta\\
&-2(p-2)\int_\Omega u^{\left(\frac{p}{2}-2\right)\beta+\lambda}F^{\frac{p}{2}+\alpha-3}F^{t}_\varepsilon\langle\nabla F,\nabla u\rangle\langle\nabla u,\nabla\eta\rangle\eta\\
&+\left[B+(p-1)|\lambda|\right]\int_\Omega u^{\frac{\beta}{2}(p-1)-1+\lambda}F^{\alpha+\frac{p}{2}-\frac{3}{2}}|\nabla F|F^t_\varepsilon\eta^2\\
\ge\,&t\int_\Omega u^{\left(\frac{p}{2}-1\right)\beta+\lambda}F^{\frac{p}{2}+\alpha-2}F^{t-1}_\varepsilon |\nabla F|^2\eta^2+(p-2)t\int_\Omega u^{\left(\frac{p}{2}-2\right)\beta+\lambda}F^{\frac{p}{2}+\alpha-3}F^{t-1}_\varepsilon\langle\nabla F,\nabla u\rangle^2\eta^2\\
&+ A\int_\Omega u^{\frac{p}{2}\beta-2+\lambda}F^{\alpha+\frac{p}{2}}F^t_\varepsilon\eta^2.\\
\end{split}
\end{align}
Set
\begin{align}\label{3.58}
L_p=t\int_\Omega u^{\left(\frac{p}{2}-1\right)\beta+\lambda}F^{\frac{p}{2}+\alpha-2}F^{t-1}_\varepsilon |\nabla F|^2\eta^2+(p-2)t\int_\Omega u^{\left(\frac{p}{2}-2\right)\beta+\lambda}F^{\frac{p}{2}+\alpha-3}F^{t-1}_\varepsilon\langle\nabla F,\nabla u\rangle^2\eta^2.
\end{align}
Then we need to consider the following two cases:

($\romannumeral 1$) $p\ge2$;\quad\quad($\romannumeral 2$) $1<p<2$.
\medskip

\noindent For the case ($\romannumeral 1$), from \eqref{3.35} we can easily know that
\begin{align}\label{3.59}
L_p\ge t\int_\Omega u^{\left(\frac{p}{2}-1\right)\beta+\lambda}F^{\frac{p}{2}+\alpha-2}F^{t-1}_\varepsilon |\nabla F|^2\eta^2.
\end{align}

\noindent For the case ($\romannumeral 2$),  from \eqref{3.35} we can achieve that
\begin{align}\label{3.60}
\begin{split}
L_p=&t\int_\Omega u^{\left(\frac{p}{2}-1\right)\beta+\lambda}F^{\frac{p}{2}+\alpha-2}F^{t-1}_\varepsilon |\nabla F|^2\eta^2-(2-p)t\int_\Omega u^{\left(\frac{p}{2}-2\right)\beta+\lambda}F^{\frac{p}{2}+\alpha-3}F^{t-1}_\varepsilon\langle\nabla F,\nabla u\rangle^2\eta^2\\
\ge&t\int_\Omega u^{\left(\frac{p}{2}-1\right)\beta+\lambda}F^{\frac{p}{2}+\alpha-2}F^{t-1}_\varepsilon |\nabla F|^2\eta^2-(2-p)t\int_\Omega u^{\left(\frac{p}{2}-1\right)\beta+\lambda}F^{\frac{p}{2}+\alpha-2}F^{t-1}_\varepsilon |\nabla F|^2\eta^2\\
=&(p-1)t\int_\Omega u^{\left(\frac{p}{2}-1\right)\beta+\lambda}F^{\frac{p}{2}+\alpha-2}F^{t-1}_\varepsilon |\nabla F|^2\eta^2.
\end{split}
\end{align}
Set
\begin{align}\label{3.61}
\begin{split}
\theta(p)=
\begin{cases}
p-1,&1<p<2;\\[3mm]
1,&p\ge2.
\end{cases}
\end{split}
\end{align}
Therefore, combining \eqref{3.58}, \eqref{3.59}, \eqref{3.60} and \eqref{3.61} yields
\begin{align}\label{3.62}
L_p\ge \theta(p)t\int_\Omega u^{\left(\frac{p}{2}-1\right)\beta+\lambda}F^{\frac{p}{2}+\alpha-2}F^{t-1}_\varepsilon |\nabla F|^2\eta^2.
\end{align}
By combining \eqref{3.57} and \eqref{3.62} together, we have
\begin{align*}
&2(n-1)\kappa \int_\Omega u^{\left(\frac{p}{2}-1\right)\beta+\lambda}F^{\alpha+\frac{p}{2}-1}F^t_\varepsilon\eta^2-2\int_\Omega u^{\left(\frac{p}{2}-1\right)\beta+\lambda}F^{\frac{p}{2}+\alpha-2}F^{t}_\varepsilon\langle\nabla F,\nabla\eta\rangle\eta\\
&-2(p-2)\int_\Omega u^{\left(\frac{p}{2}-2\right)\beta+\lambda}F^{\frac{p}{2}+\alpha-3}F^{t}_\varepsilon\langle\nabla F,\nabla u\rangle\langle\nabla u,\nabla\eta\rangle\eta\\
&+\left[B+(p-1)|\lambda|\right]\int_\Omega u^{\frac{\beta}{2}(p-1)-1+\lambda}F^{\alpha+\frac{p}{2}-\frac{3}{2}}|\nabla F|F^t_\varepsilon\eta^2\\
\ge&\theta(p)t\int_\Omega u^{\left(\frac{p}{2}-1\right)\beta+\lambda}F^{\frac{p}{2}+\alpha-2}F^{t-1}_\varepsilon |\nabla F|^2\eta^2+ A\int_\Omega u^{\frac{p}{2}\beta-2+\lambda}F^{\alpha+\frac{p}{2}}F^t_\varepsilon\eta^2.
\end{align*}
By letting $\varepsilon\longrightarrow 0^+$, we obtain
\begin{align}\label{3.63}
\begin{split}
&2(n-1)\kappa \int_\Omega u^{\left(\frac{p}{2}-1\right)\beta+\lambda}F^{t+\alpha+\frac{p}{2}-1}\eta^2-2\int_\Omega u^{\left(\frac{p}{2}-1\right)\beta+\lambda}F^{t+\frac{p}{2}+\alpha-2}\langle\nabla F,\nabla\eta\rangle\eta\\
&-2(p-2)\int_\Omega u^{\left(\frac{p}{2}-2\right)\beta+\lambda}F^{t+\frac{p}{2}+\alpha-3}\langle\nabla F,\nabla u\rangle\langle\nabla u,\nabla\eta\rangle\eta\\
&+\left[B+(p-1)|\lambda|\right]\int_\Omega u^{\frac{\beta}{2}(p-1)-1+\lambda}F^{t+\alpha+\frac{p}{2}-\frac{3}{2}}|\nabla F|\eta^2\\
\ge&\theta(p)t\int_\Omega u^{\left(\frac{p}{2}-1\right)\beta+\lambda}F^{t+\frac{p}{2}+\alpha-3} |\nabla F|^2\eta^2+ A\int_\Omega u^{\frac{p}{2}\beta-2+\lambda}F^{t+\alpha+\frac{p}{2}}\eta^2.\\
\end{split}
\end{align}
By the absolute value inequality and Cauchy inequality, we have
\begin{align}\label{3.64}
\begin{split}
&\left[B+(p-1)|\lambda|\right]\int_\Omega u^{\frac{\beta}{2}(p-1)-1+\lambda}F^{t+\alpha+\frac{p}{2}-\frac{3}{2}}|\nabla F|\eta^2\\
\le&\frac{\theta(p)}{4}t\int_\Omega u^{\left(\frac{p}{2}-1\right)\beta+\lambda}F^{t+\frac{p}{2}+\alpha-3} |\nabla F|^2\eta^2 +\frac{1}{\theta(p)t}\left[B+(p-1)|\lambda|\right]^2\int_\Omega u^{\frac{p}{2}\beta-2+\lambda}F^{t+\alpha+\frac{p}{2}}\eta^2,
\end{split}
\end{align}

\begin{align}\label{3.65}
\begin{split}
&-2\int_\Omega u^{\left(\frac{p}{2}-1\right)\beta+\lambda}F^{t+\frac{p}{2}+\alpha-2}\langle\nabla F,\nabla\eta\rangle\eta\le2\int_\Omega u^{\left(\frac{p}{2}-1\right)\beta+\lambda}F^{t+\frac{p}{2}+\alpha-2}|\nabla F||\nabla\eta|\eta\\
\le&\frac{\theta(p)}{4}t\int_\Omega u^{\left(\frac{p}{2}-1\right)\beta+\lambda}F^{t+\frac{p}{2}+\alpha-3} |\nabla F|^2\eta^2+\frac{4}{\theta(p)t}\int_\Omega u^{\left(\frac{p}{2}-1\right)\beta+\lambda}F^{t+\frac{p}{2}+\alpha-1}|\nabla\eta|^2
\end{split}
\end{align}
and
\begin{align}\label{3.66}
\begin{split}
&-2(p-2)\int_\Omega u^{\left(\frac{p}{2}-2\right)\beta+\lambda}F^{t+\frac{p}{2}+\alpha-3}\langle\nabla F,\nabla u\rangle\langle\nabla u,\nabla\eta\rangle\eta\\
\le&2|p-2|\int_\Omega u^{\left(\frac{p}{2}-1\right)\beta+\lambda}F^{t+\frac{p}{2}+\alpha-2}|\nabla F||\nabla\eta|\eta\\
\le&\frac{\theta(p)}{4}t\int_\Omega u^{\left(\frac{p}{2}-1\right)\beta+\lambda}F^{t+\frac{p}{2}+\alpha-3} |\nabla F|^2\eta^2+\frac{4}{\theta(p)t}(p-2)^2\int_\Omega u^{\left(\frac{p}{2}-1\right)\beta+\lambda}F^{t+\frac{p}{2}+\alpha-1}|\nabla\eta|^2.
\end{split}
\end{align}
Substituting \eqref{3.64}, \eqref{3.65} and \eqref{3.66} into \eqref{3.63}, we obtain
\begin{align}\label{3.67}
\begin{split}
&\frac{\theta(p)}{4}t\int_\Omega u^{\left(\frac{p}{2}-1\right)\beta+\lambda}F^{t+\frac{p}{2}+\alpha-3} |\nabla F|^2\eta^2+A\int_\Omega u^{\frac{p}{2}\beta-2+\lambda}F^{t+\alpha+\frac{p}{2}}\eta^2\\
\le&2(n-1)\kappa \int_\Omega u^{\left(\frac{p}{2}-1\right)\beta+\lambda}F^{t+\alpha+\frac{p}{2}-1}\eta^2 +\frac{1}{\theta(p)t}\left[B+(p-1)|\lambda|\right]^2\int_\Omega u^{\frac{p}{2}\beta-2+\lambda}F^{t+\alpha+\frac{p}{2}}\eta^2\\
&+\frac{4}{\theta(p)t}\left[1+(p-2)^2\right]\int_\Omega u^{\left(\frac{p}{2}-1\right)\beta+\lambda}F^{t+\frac{p}{2}+\alpha-1}|\nabla\eta|^2.
\end{split}
\end{align}
Now we choose $t$ large enough such that
\begin{align}\label{3.68}
\frac{1}{\theta(p)t}\left[B+(p-1)|\lambda|\right]^2\le\frac{A}{2}.
\end{align}
It follows from \eqref{3.67} and \eqref{3.68} that
\begin{align*}
\begin{split}
&\frac{\theta(p)}{4}t\int_\Omega u^{\left(\frac{p}{2}-1\right)\beta+\lambda}F^{t+\frac{p}{2}+\alpha-3}|\nabla F|^2\eta^2+\frac{A}{2}\int_\Omega u^{\frac{p}{2}\beta-2+\lambda}F^{t+\alpha+\frac{p}{2}}\eta^2\\
\le&2(n-1)\kappa \int_\Omega u^{\left(\frac{p}{2}-1\right)\beta +\lambda}F^{t+\alpha+\frac{p}{2}-1}\eta^2 +\frac{4}{\theta(p)t}\left[1+(p-2)^2\right]\int_\Omega u^{\left(\frac{p}{2}-1\right)\beta+\lambda}F^{t+\frac{p}{2}+\alpha-1}|\nabla\eta|^2.
\end{split}
\end{align*}
By letting $\lambda=\beta\left(1-\frac{p}{2}\right)$, we obtain
\begin{align}\label{3.69}
\begin{split}
&\frac{\theta(p)}{4}t\int_\Omega F^{t+\frac{p}{2}+\alpha-3} |\nabla F|^2\eta^2+\frac{A}{2}\int_\Omega u^{\beta-2}F^{t+\alpha+\frac{p}{2}}\eta^2\\
\le&2(n-1)\kappa \int_\Omega F^{t+\alpha+\frac{p}{2}-1}\eta^2+\frac{4}{\theta(p)t}\left[1+(p-2)^2\right]\int_\Omega F^{t+\frac{p}{2}+\alpha-1}|\nabla\eta|^2.
\end{split}
\end{align}

On the other hand, we have
\begin{align}\label{3.70}
\begin{split}
\left|\nabla\right(F^{\frac{p}{4}+\frac{\alpha-1}{2}+\frac{t}{2}}\eta\left)\right|^2&=\left|\left(\frac{p+2t+2\alpha-2}{4}\right)F^{\frac{p}{4}
+\frac{t}{2}+\frac{\alpha-3}{2}}\eta\nabla F+F^{\frac{p}{4}+\frac{t}{2}+\frac{\alpha-1}{2}}\nabla\eta\right|^2\\
&\le\frac{(p+2t+2\alpha-2)^2}{8}F^{\frac{p}{2}+t+\alpha-3}\left|\nabla F\right|^2\eta^2+2F^{\frac{p}{2}+t+\alpha-1}\left|\nabla\eta\right|^2.
\end{split}
\end{align}
Substituting \eqref{3.70} into \eqref{3.69} gives
\begin{align}\label{3.71}
\begin{split}
&\frac{2\theta(p)t}{(p+2t+2\alpha-2)^2}\int_\Omega\left|\nabla\right(F^{\frac{p}{4}+\frac{\alpha-1}{2}+\frac{t}{2}}\eta\left)\right|^2+\frac{A}{2}\int_\Omega u^{\beta-2}F^{t+\alpha+\frac{p}{2}}\eta^2\\
\le&2(n-1)\kappa \int_\Omega F^{t+\alpha+\frac{p}{2}-1}\eta^2+\left\{\frac{4}{\theta(p)t}\left[1+(p-2)^2\right]+ \frac{4\theta(p)t}{(p+2t+2\alpha-2)^2}\right\}\int_\Omega F^{t+\frac{p}{2}+\alpha-1}|\nabla\eta|^2.
\end{split}
\end{align}
On the other hand, we note that Saloff-Coste's Sobolev inequality implies
\begin{align}\label{3.72}
\begin{split}
&\exp\left\{-C_n(1+\sqrt{\kappa}R)\right\}V^\frac{2}{n}R^{-2}\left\|F^{\frac{p}{4}+\frac{\alpha-1}{2}
+\frac{t}{2}}\eta\right\|^2_{L^\frac{2n}{n-2}(\Omega)}\\
\le&\int_\Omega{\left|\nabla\left(F^{\frac{p}{4}+\frac{\alpha-1}{2}+\frac{t}{2}}\eta\right)\right|^2}
+R^{-2}\int_\Omega{F^{\frac{p}{2}+\alpha+t-1}\eta^2}.
\end{split}
\end{align}
Now, we substitute \eqref{3.72} into \eqref{3.71} to obtain
\begin{align}\label{3.73}
\begin{split}
&\frac{2\theta(p)t}{(p+2t+2\alpha-2)^2}\exp\left\{-C_n(1+\sqrt{\kappa}R)\right\}V^\frac{2}{n}R^{-2}\left\|F^{\frac{p}{4}
+\frac{\alpha-1}{2}+\frac{t}{2}}\eta\right\|^2_{L^\frac{2n}{n-2}(\Omega)}\\
\le&-\frac{A}{2}\int_\Omega u^{\beta-2}F^{t+\alpha+\frac{p}{2}}\eta^2 +\left[2(n-1)\kappa+\frac{2\theta(p)t}{(p+2t+2\alpha-2)^2R^2}\right] \int_\Omega F^{t+\alpha+\frac{p}{2}-1}\eta^2\\
&+\left\{\frac{4}{\theta(p)t}\left[1+(p-2)^2\right] + \frac{4\theta(p)t}{(p+2t+2\alpha-2)^2}\right\}\int_\Omega F^{t+\frac{p}{2}+\alpha-1}|\nabla\eta|^2.
\end{split}
\end{align}
We divide the both sides of $\eqref{3.73}$ by
$$\frac{2\theta(p)t}{(p+2t+2\alpha-2)^2}$$
to obtain
\begin{align}\label{3.74}
\begin{split}
&\exp\left\{-C_n(1+\sqrt{\kappa}R)\right\}V^\frac{2}{n}R^{-2}\left\|F^{\frac{p}{4}+\frac{\alpha-1}{2}
+\frac{t}{2}}\eta\right\|^2_{L^\frac{2n}{n-2}(\Omega)}\\
&+\frac{A(p+2t+2\alpha-2)^2}{4\theta(p)t}\int_\Omega u^{\beta-2}F^{t+\alpha+\frac{p}{2}}\eta^2\\
\le &\left[(n-1)\frac{(p+2t+2\alpha-2)^2}{\theta(p)t}\kappa+ \frac{1}{R^2}\right] \int_\Omega F^{t+\alpha+\frac{p}{2}-1}\eta^2\\
&+\left\{\frac{2(p+2t+2\alpha-2)^2}{\theta^2(p)t^2}\left[1+(p-2)^2\right]+2\right\}\int_\Omega F^{t+\frac{p}{2}+\alpha-1}|\nabla\eta|^2.
\end{split}
\end{align}

Set
\begin{align}
\label{3.75}\mu_1&=\mathop{\sup}\limits_{t\in[1,\infty)}\frac{2(p+2t+2\alpha-2)^2}{\theta^2(p)t^2}\left[1+(p-2)^2\right]+2,\\
\label{3.76}\mu&=A\mathop{\inf}\limits_{t\in[1,\infty)}\frac{(p+2t+2\alpha-2)^2}{4\theta(p)t}.
\end{align}
Then, we can see easily that $\mu_1$ and $\mu$ are both finite positive constants. Combining \eqref{3.74}, \eqref{3.75} and \eqref{3.76} yields
\begin{align}\label{3.77}
\begin{split}
&\exp\left\{-C_n(1+\sqrt{\kappa}R)\right\}V^\frac{2}{n}R^{-2}\left\|F^{\frac{p}{4}+\frac{\alpha-1}{2}
+\frac{t}{2}}\eta\right\|^2_{L^\frac{2n}{n-2}(\Omega)}+\mu t\int_\Omega u^{\beta-2}F^{t+\alpha+\frac{p}{2}}\eta^2\\
\le&\left[(n-1)\mu_1t\kappa+ \frac{1}{R^2}\right] \int_\Omega F^{t+\alpha+\frac{p}{2}-1}\eta^2+\mu_1\int_\Omega F^{t+\frac{p}{2}+\alpha-1}|\nabla\eta|^2.
\end{split}
\end{align}
Set
\begin{align}\label{3.78}
\begin{split}
\phi_\beta=
\begin{cases}
\mathop{\sup}\limits_{\Omega}u,&0<\beta<2, \\[3mm]
1,&\beta=2, \\[3mm]
\mathop{\inf}\limits_{\Omega}u,&\beta>2.
\end{cases}
\end{split}
\end{align}
Hence, combining \eqref{3.77} and \eqref{3.78} together, we can obtain that
\begin{align}\label{3.79}
\begin{split}
&\exp\left\{-C_n(1+\sqrt{\kappa}R)\right\}V^\frac{2}{n}R^{-2}\left\|F^{\frac{p}{4}+\frac{\alpha-1}{2}
+\frac{t}{2}}\eta\right\|^2_{L^\frac{2n}{n-2}(\Omega)}+\mu t\phi_\beta^{\beta-2}\int_\Omega F^{t+\alpha+\frac{p}{2}}\eta^2\\
\le&\left[(n-1)\mu_1t\kappa+ \frac{1}{R^2}\right] \int_\Omega F^{t+\alpha+\frac{p}{2}-1}\eta^2+\mu_1\int_\Omega F^{t+\frac{p}{2}+\alpha-1}|\nabla\eta|^2.
\end{split}
\end{align}
Thus, we finish the proof of this lemma.
\end{proof}

\subsection{$L^{\beta_1}$-bound of gradient in a geodesic ball with {$\frac{3R}{4}$} radius}\

\begin{lemma}\label{lemma3.7}
Let $(M,g)$ be an $n$-dim $(n\ge3)$ complete manifold with $\mathrm{Ric}\ge-(n-1)\kappa$, where $\kappa$ is a non-negative constant. Furthermore, suppose that $a$, $q$, $p$ and $\beta$ satisfy the conditions stated in Lemma \ref{lemma3.6}. Let
$$\beta_1=\frac{n}{n-2}\cdot\frac{p+2t_0+2\alpha-2}{2}.$$
If $v$ is a positive solution to equation \eqref{1} on the geodesic ball $B(x_0,2R)\subset M$, then for $t_0$ large enough there exists $\mathcal{C}=\mathcal{C}(n,p,q,\beta)>0$ such that
\begin{align}
\begin{split}
\|F\|_{L^{\beta_1}\left(B\left(x_0,\frac{3}{4}R\right)\right)}\le&\mathcal{C}V^\frac{1}{\beta_1}\left(\kappa+\frac{1}{R^2} \right)\phi^{2-\beta},
\end{split}
\end{align}
where $V$ is the volume of geodesic ball $B_R(x_0)$, $\phi$ is defined in \eqref{3.78}.
\end{lemma}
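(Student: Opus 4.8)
The plan is to feed the integral inequality \eqref{3.79} into the Saloff--Coste Sobolev inequality a single time, using the genuinely superlinear term $\mu t\phi_\beta^{\beta-2}\int_\Omega F^{t+\alpha+\frac p2}\eta^2$ on its left to absorb the two lower-order terms on its right, so that only a volume term survives. First I would fix $\alpha>1$ and $t=t_0$ exactly as in Lemma \ref{lemma3.6} and in \eqref{3.68} (so that $A>0$ and the absorption \eqref{3.68} is legitimate), set $m_0=t_0+\alpha+\tfrac p2$, and record the bookkeeping identities $m_0-1=\tfrac{p+2t_0+2\alpha-2}{2}$, $\ \tfrac p4+\tfrac{\alpha-1}2+\tfrac{t_0}2=\tfrac{m_0-1}2$, and $\beta_1=\tfrac n{n-2}(m_0-1)$; these say precisely that raising $F^{\frac p4+\frac{\alpha-1}2+\frac{t_0}2}$ to the Sobolev power $\tfrac{2n}{n-2}$ produces $F^{\beta_1}$, and that $\tfrac{n-2}{n(m_0-1)}=\tfrac1{\beta_1}$. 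I would then apply \eqref{3.79} on $\Omega=B(x_0,R)$ with $\eta=\zeta^{m_0}$, where $\zeta\in C_0^\infty(B(x_0,R))$ is a standard cutoff, $\zeta\equiv1$ on $B(x_0,\tfrac{3R}4)$ and $|\nabla\zeta|\le C/R$. Taking this high power of the cutoff is what will make the gradient term absorbable.

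Next comes the absorption. For $[(n-1)\mu_1t_0\kappa+R^{-2}]\int F^{m_0-1}\eta^2$, Young's inequality gives $F^{m_0-1}\le\delta F^{m_0}+C(m_0)\delta^{-(m_0-1)}$ for every $\delta>0$. For $\mu_1\int F^{m_0-1}|\nabla\eta|^2$, I would write $F^{m_0-1}|\nabla\eta|^2=(F^{m_0}\eta^2)^{\frac{m_0-1}{m_0}}\bigl(|\nabla\eta|^2\eta^{-\frac{2(m_0-1)}{m_0}}\bigr)$ and use $\eta=\zeta^{m_0}$ to obtain $F^{m_0-1}|\nabla\eta|^2\le\delta F^{m_0}\eta^2+C(m_0)\delta^{-(m_0-1)}|\nabla\zeta|^{2m_0}$, i.e. a pointwise bound by $\delta F^{m_0}\eta^2$ plus a quantity $\lesssim\delta^{-(m_0-1)}R^{-2m_0}$. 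Choosing $\delta$ to be a small fixed multiple of $\mu t_0\phi_\beta^{\beta-2}$ — here one must respect the sign of $\beta-2$, which is exactly why $\phi_\beta$ is $\sup_\Omega u$ for $\beta<2$ and $\inf_\Omega u$ for $\beta>2$, so that $u^{\beta-2}\ge\phi_\beta^{\beta-2}$ as already used in \eqref{3.77} — both $\delta F^{m_0}\eta^2$ contributions are swallowed by $\mu t_0\phi_\beta^{\beta-2}\int F^{m_0}\eta^2$. Using $(n-1)\mu_1t_0\kappa+R^{-2}\le C(n,p,q,\beta)(\kappa+R^{-2})$, $R^{-2m_0}\le(\kappa+R^{-2})^{m_0}$ and $\int_\Omega\eta^2\le V$, I am left with
\[
\exp\{-C_n(1+\sqrt\kappa R)\}\,V^{2/n}R^{-2}\,\bigl\|F^{(m_0-1)/2}\eta\bigr\|_{L^{2n/(n-2)}(\Omega)}^2\ \le\ C(n,p,q,\beta)\,(\kappa+R^{-2})^{m_0}\,\phi_\beta^{-(\beta-2)(m_0-1)}\,V.
\]

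Finally, since $\eta\equiv1$ on $B(x_0,\tfrac{3R}4)$, the Sobolev norm dominates $\bigl(\int_{B(x_0,3R/4)}F^{\beta_1}\bigr)^{(n-2)/n}=\|F\|_{L^{\beta_1}(B(x_0,3R/4))}^{\,m_0-1}$; moving the coefficient to the right, taking $(m_0-1)$-th roots and using $\tfrac{n-2}{n(m_0-1)}=\tfrac1{\beta_1}$ yields
\[
\|F\|_{L^{\beta_1}(B(x_0,3R/4))}\ \le\ C(n,p,q,\beta)\,V^{1/\beta_1}\,R^{2/(m_0-1)}(\kappa+R^{-2})^{m_0/(m_0-1)}\,\phi_\beta^{2-\beta},
\]
with $\exp\{C_n(1+\sqrt\kappa R)/(m_0-1)\}$ put into the constant. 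Since $R^{2/(m_0-1)}(\kappa+R^{-2})^{m_0/(m_0-1)}=(1+\kappa R^2)^{1/(m_0-1)}(\kappa+R^{-2})$, this is the asserted bound up to the factor $(1+\kappa R^2)^{1/(m_0-1)}$, which is made as close to $1$ as one likes by enlarging $t_0$ (equivalently, absorbed into $\mathcal C$). I expect the one genuinely delicate point to be the gradient-of-cutoff term: in the plain Moser scheme it would be left as a norm of $F$ over the larger ball $B(x_0,R)$, which is not permitted here because the conclusion must be self-contained (no unknown quantity on the right), and the remedy is precisely the power $\eta=\zeta^{m_0}$, affordable only because \eqref{3.49} supplies the superlinear term $A\,u^{\frac p2\beta-2}F^{\alpha+p/2}$; a secondary point requiring care is keeping all constants dependent only on $n,p,q,\beta$ once $\alpha,t_0$ have been fixed as functions of $n,p,q,\beta$.
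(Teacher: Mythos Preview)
Your overall scheme is close to the paper's: both feed \eqref{3.79} with a high-power cutoff $\eta=\zeta^{\,t_0+\alpha+p/2}$ and absorb the two right-hand integrals into the good term $\mu t_0\phi_\beta^{\beta-2}\int F^{m_0}\eta^2$. The paper uses a level-set splitting $\{F\ge\text{threshold}\}\cup\{F<\text{threshold}\}$ for the $\kappa$-term and H\"older$+$Young for the gradient term; you use Young for both. Those variants are interchangeable. The genuine gap is in your treatment of the constants.

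You assert that $\exp\{C_n(1+\sqrt\kappa R)/(m_0-1)\}$ can be ``put into the constant'' and that $(1+\kappa R^2)^{1/(m_0-1)}$ is ``made as close to $1$ as one likes by enlarging $t_0$'', while simultaneously declaring that $\alpha,t_0$ are ``fixed as functions of $n,p,q,\beta$''. These claims are incompatible: if $t_0$ (hence $m_0$) is bounded independently of $\kappa R^2$, both factors blow up as $\kappa R^2\to\infty$, so the resulting $\mathcal C$ cannot depend only on $n,p,q,\beta$. Your intermediate step $(n-1)\mu_1 t_0\kappa+R^{-2}\le C(n,p,q,\beta)(\kappa+R^{-2})$ likewise forces $t_0$ to be bounded, which is exactly what prevents the other two factors from being controlled. (A smaller symptom of the same problem: with $\delta$ chosen as a \emph{fixed} multiple of $\mu t_0\phi_\beta^{\beta-2}$, the absorption $[(n-1)\mu_1 t_0\kappa+R^{-2}]\,\delta\le\tfrac14\mu t_0\phi_\beta^{\beta-2}$ already fails for large $\kappa$.)

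The paper's fix is to let $t_0$ depend on $\kappa R^2$: in addition to \eqref{3.68} it imposes the two-sided condition $(1+\kappa R^2)\le t_0\le C_0(1+\kappa R^2)$ (this is \eqref{3.94}). With that choice $m_0-1\gtrsim 1+\kappa R^2$, so $(1+\kappa R^2)^{1/(m_0-1)}$, $\exp\{C_n(1+\sqrt\kappa R)/(m_0-1)\}$, and the stray $t_0^{1/(m_0-1)}$ coming from the leftover volume term are all bounded by constants depending only on $n,p,q,\beta$. This is what the phrase ``for $t_0$ large enough'' in the lemma actually means here --- large in a way tied to $\kappa R^2$, not a fixed numerical threshold. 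Once you make this adjustment, your Young-inequality route goes through and is essentially equivalent to the paper's argument.
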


\begin{proof}
We can infer from \eqref{3.79} that
\begin{align}\label{3.80}
\begin{split}
&\exp\left\{-C_n(1+\sqrt{\kappa}R)\right\}V^\frac{2}{n}R^{-2}\left\|F^{\frac{p}{4}+\frac{\alpha-1}{2}
+\frac{t_0}{2}}\eta\right\|^2_{L^\frac{2n}{n-2}(\Omega)}+\mu t_0\phi_\beta^{\beta-2}\int_\Omega F^{t_0+\alpha+\frac{p}{2}}\eta^2\\
\le&\left[(n-1)\mu_1t_0\kappa+ \frac{1}{R^2}\right] \int_\Omega F^{t_0+\alpha+\frac{p}{2}-1}\eta^2+\mu_1\int_\Omega F^{t_0+\frac{p}{2}+\alpha-1}|\nabla\eta|^2,
\end{split}
\end{align}
where $t=t_0$ satisfies \eqref{3.68}. Denote
\begin{align*}
{\Omega}_1=\left\{x:~F\ge\left[2(n-1)\kappa\frac{\mu_1}{\mu}+\frac{2}{\mu t_0 R^2}\right]\phi^{2-\beta},~x\in\Omega\right\},
\end{align*}
then we have
\begin{align}\label{3.81}
\left[(n-1)\mu_1t_0\kappa+ \frac{1}{R^2}\right] \int_{\Omega_1} F^{t_0+\alpha+\frac{p}{2}-1}\eta^2\le\frac{\mu}{2}t_0 \phi_\beta^{\beta-2}\int_\Omega F^{t_0+\alpha+\frac{p}{2}}\eta^2.
\end{align}
Set
\begin{align*}
{\Omega}_2=\Omega\setminus {\Omega}_1=\left\{x:~F<\left[2(n-1)\kappa\frac{\mu_1}{\mu}+\frac{2}{\mu t_0 R^2}\right]\phi^{2-\beta},~x\in\Omega\right\}.
\end{align*}
Then we have
\begin{align}\label{3.82}
\begin{split}
&\left[(n-1)\mu_1t_0\kappa+ \frac{1}{R^2}\right] \int_{\Omega_2} F^{t_0+\alpha+\frac{p}{2}-1}\eta^2\\
\le&\left[(n-1)\mu_1t_0\kappa+ \frac{1}{R^2}\right]\left\{\left[2(n-1)\kappa\frac{\mu_1}{\mu}+\frac{2}{\mu t_0 R^2}\right]\phi^{2-\beta}\right\}^{t_0+\alpha+\frac{p}{2}-1}V,
\end{split}
\end{align}
where $V$ is the volume of $\Omega=B(x_0,R)$. Combining \eqref{3.81} and \eqref{3.82} together, we obtain
\begin{align}\label{3.83}
\begin{split}
&\left[(n-1)\mu_1t_0\kappa+ \frac{1}{R^2}\right] \int_\Omega F^{t_0+\alpha+\frac{p}{2}-1}\eta^2-\frac{\mu}{2}t_0 \phi_\beta^{\beta-2}\int_\Omega F^{t_0+\alpha+\frac{p}{2}}\eta^2\\
\le&\left[(n-1)\mu_1t_0\kappa+ \frac{1}{R^2}\right]\left\{\left[2(n-1)\kappa\frac{\mu_1}{\mu}+\frac{2}{\mu t_0 R^2}\right]\phi^{2-\beta}\right\}^{t_0+\alpha+\frac{p}{2}-1}V.
\end{split}
\end{align}

We denote $\Omega_1=B(x_0,\frac{3R}{4})$ and choose $\eta_1\in C_0^\infty(\Omega)$ satisfying
\begin{align*}
\begin{cases}
0\le\eta_1\le1,\quad\eta_1 \equiv1\quad in~\Omega_1;\\
|\nabla\eta_1|\le\frac{C}{R},
\end{cases}
\end{align*}
and let
$$\eta=\eta_1^{t_0+\frac{p}{2}+\alpha}.$$
Then, we have
\begin{align}\label{3.84}
\begin{split}
\mu_1\int_\Omega F^{t_0+\frac{p}{2}+\alpha-1}|\nabla\eta|^2=&\mu_1\left(t_0+\frac{p}{2}+\alpha\right)^2\int_\Omega F^{t_0+\frac{p}{2}+\alpha-1}\eta_1^{p+2\alpha-2+2t_0}|\nabla\eta_1|^2\\
\le&\mu_1\left(t_0+\frac{p}{2}+\alpha\right)^2\frac{C^2}{R^2}\int_\Omega F^{t_0+\frac{p}{2}+\alpha-1}\eta_1^{p+2\alpha-2+2t_0}.
\end{split}
\end{align}
By H\"older's inequality, \eqref{3.84} can be written as
\begin{align}\label{3.85}
\begin{split}
\mu_1\int_\Omega F^{t_0+\frac{p}{2}+\alpha-1}|\nabla\eta|^2\le&\mu_1\left(t_0+\frac{p}{2}+\alpha\right)^2\frac{C^2}{R^2}\left(\int_\Omega F^{t_0+\frac{p}{2}+\alpha}\eta_1^{2t_0+p+2\alpha}\right)^\frac{2t_0+p+2\alpha-2}{2t_0+p+2\alpha}V^\frac{2}{2t_0+p+2\alpha}\\
=&\mu_1\left(t_0+\frac{p}{2}+\alpha\right)^2\frac{C^2}{R^2}\left(\int_\Omega F^{t_0+\frac{p}{2}+\alpha}\eta^2\right)^\frac{2t_0+p+2\alpha-2}{2t_0+p+2\alpha}V^\frac{2}{2t_0+p+2\alpha}.
\end{split}
\end{align}
By using Young's inequality, we can write \eqref{3.85} as
\begin{align}\label{3.86}
\begin{split}
&\mu_1\int_\Omega F^{t_0+\frac{p}{2}+\alpha-1}|\nabla\eta|^2\\
\le&\frac{\mu t_0}{2}\phi^{\beta-2}\int_\Omega F^{t_0+\frac{p}{2}+\alpha}\eta^2\\
&+\frac{2}{2t_0+p+2\alpha}\left[\mu_1\left(t_0+\frac{p}{2}+\alpha\right)^2\frac{C^2}{R^2}\right]^{t_0+\frac{p}{2}
+\alpha}\left[\frac{2(2t_0+p+2\alpha-2)}{(2t_0+p+2\alpha)\mu t_0}\phi^{2-\beta}\right]^{t_0+\frac{p}{2}+\alpha-1}V.
\end{split}
\end{align}
Substituting \eqref{3.83} and \eqref{3.86} into \eqref{3.80}  gives
\begin{align}\label{3.87}
\begin{split}
&\exp\left\{-C_n(1+\sqrt{\kappa}R)\right\}V^\frac{2}{n}R^{-2}\left\|F^{\frac{p}{4}+\frac{\alpha-1}{2}
+\frac{t_0}{2}}\eta\right\|^2_{L^\frac{2n}{n-2}(\Omega)}\\
\le&\left[(n-1)\mu_1t_0\kappa+ \frac{1}{R^2}\right]\left\{\left[2(n-1)\kappa\frac{\mu_1}{\mu}+\frac{2}{\mu t_0 R^2}\right]\phi^{2-\beta}\right\}^{t_0+\alpha+\frac{p}{2}-1}V\\
&+\frac{2}{2t_0+p+2\alpha}\left[\mu_1\left(t_0+\frac{p}{2}+\alpha\right)^2\frac{C^2}{R^2}\right]^{t_0+\frac{p}{2}+\alpha}
\left[\frac{2(2t_0+p+2\alpha-2)}{(2t_0+p+2\alpha)\mu t_0}\phi^{2-\beta}\right]^{t_0+\frac{p}{2}+\alpha-1}V.
\end{split}
\end{align}
Taking power of $\frac{2}{2t_0+p+2\alpha-2}$ on the both sides of $\eqref{3.87}$ respectively, we obtain
\begin{align*}
\begin{split}
&\|F\|_{L^{\beta_1}\left(B\left(x_0,\frac{3}{4}R\right)\right)}\\
\le&\left\|F^{\frac{p}{4}+\frac{\alpha-1}{2}
+\frac{t_0}{2}}\eta\right\|^\frac{4}{2t_0+p+2\alpha-2}_{L^\frac{2n}{n-2}(\Omega)}\\
\le&\exp\left\{\frac{2C_n(1+\sqrt{\kappa}R)}{2t_0+p+2\alpha-2}\right\}V^\frac{1}{\beta_1}\Bigg\{\left[(n-1)\mu_1t_0R^2\kappa+ 1\right]\left\{\left[2(n-1)\kappa\frac{\mu_1}{\mu}+\frac{2}{\mu t_0 R^2}\right]\phi^{2-\beta}\right\}^{t_0+\alpha+\frac{p}{2}-1}\\
&+\frac{2}{2t_0+p+2\alpha}\left[\mu_1\left(t_0+\frac{p}{2}+\alpha\right)^2\frac{C^2}{R^2}\right]^{t_0+\frac{p}{2}+\alpha}
\left[\frac{2(2t_0+p+2\alpha-2)}{(2t_0+p+2\alpha)\mu t_0}\phi^{2-\beta}\right]^{t_0+\frac{p}{2}+\alpha-1}R^2\Bigg\}^\frac{2}{2t_0+p+2\alpha-2},
\end{split}
\end{align*}
where
$$\beta_1=\frac{n}{n-2}\frac{p+2t_0+2\alpha-2}{2}.$$

By using the inequality
$$(a_1+a_2)^{b_1}\le 2^{b_1}(a_1^{b_1}+a_2^{b_1}),~\quad (a_i\ge0,~b_1>0),$$
we infer from the above inequality
\begin{align}\label{3.88}
\begin{split}
&\|F\|_{L^{\beta_1}\left(B\left(x_0,\frac{3}{4}R\right)\right)}\\
\le&\exp\left\{\frac{2C_n(1+\sqrt{\kappa}R)}{2t_0+p+2\alpha-2}\right\}V^\frac{1}{\beta_1}2^\frac{2}{2t_0+p+2\alpha-2}\\
&\cdot\Bigg\{\left[(n-1)\mu_1t_0R^2\kappa+ 1\right]^\frac{2}{2t_0+p+2\alpha-2}\left[2(n-1)\kappa\frac{\mu_1}{\mu}+\frac{2}{\mu t_0 R^2}\right]\phi^{2-\beta}\\
&+\left(\frac{2}{2t_0+p+2\alpha}\right)^\frac{2}{2t_0+p+2\alpha-2}\left[\mu_1\left(t_0+\frac{p}{2}
+\alpha\right)^2C^2\right]^{\frac{2t_0+p+2\alpha}{2t_0+p+2\alpha-2}}\frac{2(2t_0+p+2\alpha-2)}{(2t_0+p+2\alpha)\mu t_0}\phi^{2-\beta}R^{-2}\Bigg\}\\
:=&\exp\left\{\frac{2C_n(1+\sqrt{\kappa}R)}{2t_0+p+2\alpha-2}\right\}V^\frac{1}{\beta_1}(I_1+I_2).
\end{split}
\end{align}
For $I_1$, we have
\begin{align}\label{I_1}
\begin{split}
I_1=&2^\frac{2}{2t_0+p+2\alpha-2}\left[(n-1)\mu_1t_0R^2\kappa+ 1\right]^\frac{2}{2t_0+p+2\alpha-2}\left[2(n-1)\kappa\frac{\mu_1}{\mu}+\frac{2}{\mu t_0 R^2}\right]\phi^{2-\beta}\\
=&2^\frac{2t_0+p+2\alpha}{2t_0+p+2\alpha-2}\left[(n-1)\mu_1t_0R^2\kappa+1\right]^\frac{2t_0+p+2\alpha}{2t_0+p+2\alpha-2}t_0^{-1}\mu^{-1}R^{-2}\phi^{2-\beta}\\
\le&2^\frac{2t_0+p+2\alpha}{2t_0+p+2\alpha-2}\left[(n-1)\mu_1t_0+1\right]^\frac{2t_0+p+2\alpha}{2t_0+p+2\alpha-2}\left(1+R^2\kappa\right)^\frac{2t_0+p+2\alpha}{2t_0+p+2\alpha-2}t_0^{-1}\mu^{-1}R^{-2}\phi^{2-\beta}\\
=&2^\frac{2t_0+p+2\alpha}{2t_0+p+2\alpha-2}\left[(n-1)\mu_1+t_0^{-1}\right]^\frac{2}{2t_0+p+2\alpha-2}\left(1+R^2\kappa\right)^\frac{2t_0+p+2\alpha}{2t_0+p+2\alpha-2}\mu^{-1}R^{-2}\phi^{2-\beta}.\\
\end{split}
\end{align}
Noticing that
\begin{align*}
\lim_{t_0\to+\infty}{2^\frac{2t_0+p+2\alpha}{2t_0+p+2\alpha-2}\left[(n-1)\mu_1+t_0^{-1}\right]^\frac{2}{2t_0+p+2\alpha-2}}=2,
\end{align*}
we can verify that
\begin{align}\label{limit}
\begin{split}
\mathop{\sup}\limits_{t_0\in[1,+\infty)}2^\frac{2t_0+p+2\alpha}{2t_0+p+2\alpha-2}\left[(n-1)\mu_1+t_0^{-1}\right]^\frac{2}{2t_0+p+2\alpha-2}<+\infty.
\end{split}
\end{align}
Combining \eqref{I_1} and \eqref{limit} together leads to
\begin{align}\label{I_1o}
\begin{split}
I_1\le\mathcal{C}_{I_1}\left(1+R^2\kappa\right)^\frac{2t_0+p+2\alpha}{2t_0+p+2\alpha-2}\mu^{-1}R^{-2}\phi^{2-\beta},
\end{split}
\end{align}
where $\mathcal{C}_{I_1}$ is a positive constant which depends only on $n$, $p$ and $q$.

Similarly, we have
\begin{align}
\label{I_2}I_2&\le\mathcal{C}_{I_2}\mu^{-1}t_0R^{-2}\phi^{2-\beta},
\end{align}
where $\mathcal{C}_{I_2}$ is a positive constant which depends only on $n$, $p$ and $q$.

Substituting \eqref{I_1o} and \eqref{I_2} into \eqref{3.88}, we obtain
\begin{align*}
\|F\|_{L^{\beta_1}\left(B\left(x_0,\frac{3}{4}R\right)\right)}\le&\exp\left\{\frac{2C_n(1+\sqrt{\kappa}R)}{2t_0+p+2\alpha-2}\right\}
V^\frac{1}{\beta_1}\left[\mathcal{C}_{I_1}(1+\kappa R^2)^\frac{2t_0+p+2\alpha}{2t_0+p+2\alpha-2} +\mathcal{C}_{I_2}t_0\right]R^{-2}\mu^{-1}\phi^{2-\beta}.
\end{align*}
Set
\begin{align*}
\mathcal{C}_1=\max\left\{\mathcal{C}_{I_1},\mathcal{C}_{I_2}\right\}\mu^{-1}.
\end{align*}
Then we can rewritten the above inequality as
\begin{align}\label{3.93}
\|F\|_{L^{\beta_1}\left(B\left(x_0,\frac{3}{4}R\right)\right)}\le&\mathcal{C}_1\exp\left\{\frac{2C_n(1+\sqrt{\kappa}R)}{2t_0+p+2\alpha-2}\right\}
V^\frac{1}{\beta_1}\left[(1+\kappa R^2)^\frac{2t_0+p+2\alpha}{2t_0+p+2\alpha-2}+t_0\right]R^{-2}\phi^{2-\beta},
\end{align}
where $\mathcal{C}_1$ is a positive constant which depends only on $n$, $\beta$, $p$ and $q$.

Now, let $t_0$ satisfies \eqref{3.68} and
\begin{align}\label{3.94}
(1+\kappa R^2)\le t_0\le \mathcal{C}_0 (1+\kappa R^2),
\end{align}
where $\mathcal{C}_0=\mathcal{C}_0(n,p,q,\beta)$ is a positive constant. Hence, combining \eqref{3.93} and \eqref{3.94} together we have
\begin{align*}
\|F\|_{L^{\beta_1}\left(B\left(x_0,\frac{3}{4}R\right)\right)}\le&\mathcal{C}V^\frac{1}{\beta_1}\left(\kappa+\frac{1}{R^2} \right)\phi^{2-\beta},
\end{align*}
where $\mathcal{C}$ is a positive constant which depends only on $n$, $\beta$, $p$ and $q$. Hence, we complete the proof of Lemma \ref{lemma3.7}.
\end{proof}

\subsection{Moser iteration for positive solutions of \eqref{1}}\
\medskip

By using the integral inequality \eqref{3.79}, we can achieve the following lemma:
\begin{lemma}\label{lemma3.8}
Let $(M,g)$ be an $n$-dim $(n\ge3)$ complete manifold with $\mathrm{Ric}\ge-(n-1)\kappa$, where $\kappa$ is a non-negative constant. Furthermore, suppose that $a$, $q$, $p$ and $\beta$ satisfy the same conditions as in Lemma \ref{lemma3.6}. Let
$$\beta_1=\frac{n}{n-2}\frac{p+2t_0+2\alpha-2}{2}.$$
If $v$ is a positive solution to equation \eqref{1} on the geodesic ball $B(x_0,2R)\subset M$, then for $t_0$ large enough there exists $\mathcal{C}=\mathcal{C}(n,p,q)>0$ such that
\begin{align}
\|F\|_{L^{\infty}\left(B\left(x_0,\frac{R}{2}\right)\right)}\le\mathcal{C}V^{-\frac{1}{\beta_1}}\|F\|_{L^{\beta_{1}}\left(\Omega_{1}\right)},
\end{align}
where $V$ is the volume of geodesic ball $B_R(x_0)$.
\end{lemma}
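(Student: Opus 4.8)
The plan is to carry out a Moser iteration based on the integral inequality \eqref{3.79}. First I would discard from the left-hand side of \eqref{3.79} the non-negative term $\mu t\phi_\beta^{\beta-2}\int_\Omega F^{t+\alpha+\frac p2}\eta^2$ (recall $\mu>0$, $t>0$, $\phi_\beta^{\beta-2}>0$ in all three branches of \eqref{3.78}, and $F\ge0$), and then recast the remainder in scale-free form. Setting $\gamma:=t+\alpha+\frac p2-1$ and $\nu:=\frac{n}{n-2}>1$, and observing that $\frac p4+\frac{\alpha-1}2+\frac t2=\frac\gamma2$ while the two integrals on the right of \eqref{3.79} both carry the exponent $F^{\gamma}$, the inequality \eqref{3.79} becomes, for every $t\ge t_0$ and every non-negative $\eta\in C_0^\infty(\Omega)$,
\begin{align*}
\big\|F^{\gamma/2}\eta\big\|_{L^{2\nu}(\Omega)}^2\le \exp\{C_n(1+\sqrt\kappa R)\}\,V^{-\frac2n}\Big[\big((n-1)\mu_1 t\kappa R^2+1\big)\int_\Omega F^\gamma\eta^2+\mu_1 R^2\int_\Omega F^\gamma|\nabla\eta|^2\Big].
\end{align*}
This is precisely the elementary step of Moser's scheme: a bound on $\int F^\gamma$ over the support of $\eta$ produces a bound on the $L^{\nu\gamma}$-norm of $F$ on $\{\eta=1\}$.

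Next I would introduce nested balls and cut-offs. For $k\ge0$ put $r_k:=\frac R2+\frac R4\,2^{-k}$, so $r_0=\frac{3R}4$ is the radius of $\Omega_1$ and $r_k\downarrow\frac R2$; pick $\eta_k\in C_0^\infty(B_{r_k})$ with $0\le\eta_k\le1$, $\eta_k\equiv1$ on $B_{r_{k+1}}$ and $|\nabla\eta_k|\le C2^k/R$; and set $\gamma_k:=\nu^k\beta_1$, $t_k:=\gamma_k-\alpha-\frac p2+1$. Since $\gamma_0=\beta_1=\nu\big(t_0+\alpha+\frac p2-1\big)$ the sequence $(t_k)$ is increasing with $t_k\ge t_0$, so \eqref{3.79} may be applied at every stage. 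Inserting $t=t_k$, $\eta=\eta_k$ into the displayed inequality, using $\eta_k\equiv1$ on $B_{r_{k+1}}$, $t_k\le\gamma_k=\nu^k\beta_1$ and $|\nabla\eta_k|^2\le C4^k/R^2$, and then extracting the $\gamma_k$-th root, I expect a recursion of the form
\begin{align*}
\|F\|_{L^{\gamma_{k+1}}(B_{r_{k+1}})}\le D_k^{1/\gamma_k}\|F\|_{L^{\gamma_k}(B_{r_k})},\qquad D_k\le\exp\{C_n(1+\sqrt\kappa R)\}\,V^{-\frac2n}\,\mathcal C(n,p)\,(4\nu)^k\,\beta_1(1+\kappa R^2),
\end{align*}
where $\mathcal C(n,p)$ absorbs $\mu_1$, the cut-off constant $C$, and $\alpha$ (which itself depends on $n,p,q,\beta$). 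Telescoping from $k=0$ to $m-1$ then yields $\|F\|_{L^{\gamma_m}(B_{r_m})}\le\big(\prod_{k=0}^{m-1}D_k^{1/\gamma_k}\big)\|F\|_{L^{\beta_1}(\Omega_1)}$.

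The main obstacle, and really the only delicate part, is to show that the infinite product $\prod_{k\ge0}D_k^{1/\gamma_k}$ is bounded by $\mathcal C(n,p,q)\,V^{-1/\beta_1}$, i.e.\ that the $\kappa R^2$ and $\sqrt\kappa R$ dependences disappear. Here I would use that $t_0$ has been fixed, as in Lemma \ref{lemma3.7} (see \eqref{3.94}), so that $1+\kappa R^2\le t_0$, whence $\beta_1=\nu\big(t_0+\alpha+\frac p2-1\big)\ge\nu(1+\kappa R^2)>1$. Since $\sum_{k\ge0}\gamma_k^{-1}=\frac1{\beta_1}\cdot\frac{\nu}{\nu-1}=\frac{n}{2\beta_1}$ and $\sum_{k\ge0}k\gamma_k^{-1}=\frac1{\beta_1}\cdot\frac{\nu}{(\nu-1)^2}=\frac{n(n-2)}{4\beta_1}$, the volume contribution $\big(V^{-\frac2n}\big)^{\sum\gamma_k^{-1}}$ is exactly $V^{-1/\beta_1}$, and
\begin{align*}
\log\prod_{k\ge0}D_k^{1/\gamma_k}=-\frac1{\beta_1}\log V+\frac{n}{2\beta_1}\Big[C_n(1+\sqrt\kappa R)+\log\big(\mathcal C(n,p)\beta_1(1+\kappa R^2)\big)\Big]+\frac{n(n-2)}{4\beta_1}\log(4\nu).
\end{align*}
Because $1<\nu(1+\kappa R^2)\le\beta_1$, one has $\frac{1+\sqrt\kappa R}{\beta_1}\le\frac1\nu+\frac1{2\nu}$, and $\beta_1(1+\kappa R^2)\le\beta_1^2$ so that $\frac{\log(\mathcal C(n,p)\beta_1(1+\kappa R^2))}{\beta_1}\le\frac{\log\mathcal C(n,p)+2\log\beta_1}{\beta_1}$ is bounded (using $\frac{\log\beta_1}{\beta_1}\le e^{-1}$ and $\beta_1>1$), while $\frac1{\beta_1}<1$; hence the bracket divided by $\beta_1$ is bounded by a constant depending only on $n,p,q$ (and $\beta$), giving $\prod_{k\ge0}D_k^{1/\gamma_k}\le\mathcal C(n,p,q)V^{-1/\beta_1}$. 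Finally, letting $m\to\infty$ and noting that $F=|\nabla v|^2/(v+b)^\beta$ is continuous, hence bounded, on the compact ball $\overline{\Omega_1}$, while $B_{r_m}\supset B(x_0,\frac R2)$ and $\gamma_m\to\infty$, we get $\|F\|_{L^{\gamma_m}(B_{r_m})}\ge\|F\|_{L^{\gamma_m}(B(x_0,R/2))}\to\|F\|_{L^\infty(B(x_0,R/2))}$. Combining this with the telescoped bound will complete the proof.
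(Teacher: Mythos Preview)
Your proposal is correct and follows essentially the same Moser iteration scheme as the paper: drop the non-negative term in \eqref{3.79}, insert nested cut-offs on shrinking balls, iterate the resulting $L^{\gamma_k}\!\to\!L^{\nu\gamma_k}$ bound, and absorb the $\kappa R^2$ and $\sqrt\kappa R$ factors using the lower bound $t_0\ge1+\kappa R^2$ from \eqref{3.94}. The only cosmetic differences are your choice of radii $r_k=\tfrac R2+\tfrac R4\,2^{-k}$ (the paper uses $r_m=\tfrac R2+R/4^m$, giving $16^m$ instead of $4^k$ in the gradient term) and that you spell out in more detail why the accumulated factor $\exp\{\tfrac{nC_n(1+\sqrt\kappa R)}{2\beta_1}\}(1+\kappa R^2)^{n/(2\beta_1)}$ is bounded by a constant depending only on $n,p,q$, whereas the paper simply remarks this is ``not difficult to see'' from \eqref{3.94}.
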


\begin{proof}
By dropping the second non-negative term in \eqref{3.79}, we obtain
\begin{align}\label{3.96}
\begin{split}
&\exp\left\{-C_n(1+\sqrt{\kappa}R)\right\}V^\frac{2}{n}R^{-2}\left\|F^{\frac{p}{4}+\frac{\alpha-1}{2}
+\frac{t}{2}}\eta\right\|^2_{L^\frac{2n}{n-2}(\Omega)}\\
\le&\left[(n-1)\mu_1t\kappa+ \frac{1}{R^2}\right] \int_\Omega F^{t+\alpha+\frac{p}{2}-1}\eta^2 +\mu_1\int_\Omega F^{t+\frac{p}{2}+\alpha-1}|\nabla\eta|^2.
\end{split}
\end{align}
Now, we denote $$r_m=\frac{R}{2}+\frac{R}{4^m}$$ and $\Omega_m=B(x_0,r_m)$; and then choose $\eta_m\in C_0^\infty(\Omega_m)$ satisfying
\begin{align*}
\begin{cases}
0\le\eta_m\le1,\quad\eta_m\equiv1\quad in~B(x_0,r_{m+1});\\
|\nabla\eta_m|\le C\frac{4^m}{R}.
\end{cases}
\end{align*}
Substituting $\eta$ by $\eta_m$ in \eqref{3.96}, we can easily verify that
\begin{align}
\begin{split}\label{3.97}
&\exp\left\{-C_n(1+\sqrt{\kappa}R)\right\}V^\frac{2}{n}R^{-2}\left\|F^{\frac{p}{4}+\frac{\alpha-1}{2}
+\frac{t}{2}}\right\|^2_{L^\frac{2n}{n-2}(\Omega_{m+1})}\\
\le&\left[(n-1)\mu_1t\kappa+\frac{1}{R^2}\right]\int_{\Omega_m} F^{t+\frac{p}{2}+\alpha-1}+\mu_1\frac{C^216^{m}}{R^2}\int_{\Omega_m} F^{t+\frac{p}{2}+\alpha-1}.
\end{split}
\end{align}
Next, we choose
$$\beta_1=\left(t_0+\frac{p}{2}+\alpha-1\right)\frac{n}{n-2}$$
and $$\beta_{m+1}=\frac{n\beta_m}{n-2},$$
and let $t=t_m$ such that
$$t_m+\frac{p}{2}+\alpha-1=\beta_m.$$
Then it follows that
\begin{align}\label{3.98}
\begin{split}
\exp\left\{-C_n(1+\sqrt{\kappa}R)\right\}V^\frac{2}{n}\left(\int_{\Omega_{m+1}}F^{\beta_{m+1}}\right)^\frac{n-2}{n}\le
\left[(n-1)\mu_1t_mR^2\kappa+1+\mu_1{C^216^{m}}\right]\int_{\Omega_m} F^{\beta_m}.
\end{split}
\end{align}
Taking power of $\frac{1}{\beta_m}$ on the both sides of $\eqref{3.98}$, we obtain
\begin{align}
\begin{split}\label{3.99}
&\|F\|_{L^{\beta_{m+1}}\left(\Omega_{m+1}\right)}\\
\le&\exp\left\{\frac{C_n(1+\sqrt{\kappa}R)}{\beta_m}\right\}V^{-\frac{2}{n\beta_m}}\left[(n-1)\mu_1t_m\kappa R^2+1+\mu_1{C^216^{m}}\right]^\frac{1}{\beta_m}\|F\|_{L^{\beta_{m}}\left(\Omega_{m}\right)}.
\end{split}
\end{align}
Keeping the definition of $t_m$ in mind, from \eqref{3.99} we can deduce that
\begin{align*}
\begin{split}
\|F\|_{L^{\beta_{m+1}}\left(\Omega_{m+1}\right)}\le&\exp\left\{\frac{C_n(1+\sqrt{\kappa}R)}{\beta_m}\right\}
V^{-\frac{2}{n\beta_m}}16^{\frac{m}{\beta_m}}\\
&\cdot\left[(n-1)\mu_1\left(t_0+\frac{p}{2}+\alpha-1\right)\kappa R^2+1+\mu_1C^2\right]^\frac{1}{\beta_m}\|F\|_{L^{\beta_{m}}\left(\Omega_{m}\right)}.
\end{split}
\end{align*}
Noting
\begin{align*}
\sum_{m=1}^\infty{\frac{1}{\beta_m}}=\frac{n}{2\beta_1}\quad \quad \mbox{and}\quad\quad \sum_{m=1}^\infty{\frac{m}{\beta_m}}=\frac{n^2}{4\beta_1},
\end{align*}
we have
\begin{align*}
\begin{split}
\|F\|_{L^{\infty}\left(B\left(x_0,\frac{R}{2}\right)\right)}\le&\exp\left\{\frac{nC_n(1+\sqrt{\kappa}R)}{2\beta_1}\right\}
V^{-\frac{1}{\beta_1}}16^{\frac{n^2}{4\beta_1}}\\
&\cdot\left[(n-1)\mu_1\left(t_0+\frac{p}{2}+\alpha-1\right)\kappa R^2+1+\mu_1C^2\right]^\frac{n}{2\beta_1}\|F\|_{L^{\beta_{1}}\left(\Omega_{1}\right)}\\
\le&\mathcal{C}_3\exp\left\{\frac{nC_n(1+\sqrt{\kappa}R)}{2t_0+p+2\alpha-2}\right\}V^{-\frac{1}{\beta_1}}(1+\kappa R^2)^\frac{n}{2\beta_1}\|F\|_{L^{\beta_{1}}\left(\Omega_{1}\right)},\\
\end{split}
\end{align*}
where $\mathcal{C}_3=\mathcal{C}_3(n,p,q)$ is a positive constant. Since $t_0$ satisfies \eqref{3.79} and \eqref{3.94}, it is not difficult to see that
\begin{align*}
\|F\|_{L^{\infty}\left(B\left(x_0,\frac{R}{2}\right)\right)}\le\mathcal{C}V^{-\frac{1}{\beta_1}}\|F\|_{L^{\beta_{1}}\left(\Omega_{1}\right)},
\end{align*}
where $\mathcal{C}=\mathcal{C}(n,p,q)$ is a positive constant.
\end{proof}

\section{\textbf{Proof of main theorem}}
In fact, we can easily achieve Theorem \ref{theorem1.1} by using Lemma \ref{lemma3.7} and Lemma \ref{lemma3.8}. Therefore, we omit its proof.

\ \

\noindent\textbf{Proof of Corollary \ref{corollary1.2}}
\begin{proof}
By using Theorem \ref{theorem1.1}, we only need to confirm that the constants $a$, $q$ and $p$ satisfy one of the following two estimates
\begin{align*}
\frac{\beta}{2}\cdot\frac{n+1}{n-1}(p-1)-h^{\frac{1}{2}}<q<\frac{\beta}{2}\cdot\frac{n+1}{n-1}(p-1)+h^{\frac{1}{2}}\quad (a\neq 0);
\end{align*}
\begin{align}\label{4.1}
a\left[\frac{\beta}{2}\cdot\frac{n+1}{n-1}(p-1)-q\right]\ge0.
\end{align}
Here, we only check the case 1, the others are similar.

\textbf{Case 1}: $a>0$, $p\ge n$ and $q\in \mathbb{R}$.

Since $p\ge n$, we can see that $\beta\in (0,+\infty)$ by using \eqref{beta}. Furthermore, since $a>0$, we can verify that $\eqref{4.1}$ is equivalent to
\begin{align}\label{4.2}
q\le\frac{\beta}{2}\cdot\frac{n+1}{n-1}(p-1).
\end{align}
Hence, for any fixed $p$ ($p\ge n$), $n$ and $q\in\mathbb{R}$, we can make $\eqref{4.2}$ be true by letting $\beta$ large enough. Therefore, we complete the proof of case 1.
\end{proof}

The proof of Corollary \ref{corollary1.3} and Corollary \ref{corollary1.4} are similar to Corollary \ref{corollary1.2}. Hence, we omit their proofs.
\medskip

\noindent\textbf{Proof of Theorem \ref{corollary1.5}}
\begin{proof}
By using Corollary \ref{corollary1.2}, we can know that there exist positive constants ${\mathcal{C}}={\mathcal{C}}(n,p,q)$ and $\beta=\beta(n,p,q)\in(0,\infty)$, such that the following estimate holds true
\begin{align}\label{4.3}
\mathop{\sup}\limits_{B\left(x_0,\frac{R}{2}\right)}\frac{|\nabla v|^2}{v^\beta}\le\mathcal{C}\frac{1+\kappa R^2}{R^2}\phi_\beta^{2-\beta}.
\end{align}
Since
\begin{align}\label{4.4}
\begin{split}
\phi_\beta=
\begin{cases}
            \mathop{\sup}\limits_{B(x_0,R)}v,&0<\beta<2,
            \\[3mm]
            1,&\beta=2,
            \\[3mm]
            \mathop{\inf}\limits_{B(x_0,R)}v,&\beta>2,
\end{cases}
\end{split}
\end{align}
we consider the following three cases:

\textbf{Case 1}. $\beta\in(0,2)$.

Combining \eqref{4.3} and \eqref{4.4} together, we have the following estimate
\begin{align}\label{4.5}
\mathop{\sup}\limits_{B\left(x_0,\frac{R}{2}\right)}\frac{|\nabla v|^2}{v^\beta}\le\mathcal{C}\frac{1+\kappa R^2}{R^2}\bigg(\mathop{\sup}\limits_{B(x_0,R)}v\bigg)^{2-\beta}.
\end{align}
Multiplying the both sides of \eqref{4.5} by $\mathop{\sup}\limits_{B \left(x_0,R\right)}v^{\beta-2}$ leads to
\begin{align}\label{4.6}
\mathop{\sup}\limits_{B\left(x_0,\frac{R}{2}\right)}\frac{|\nabla v|^2}{v^\beta}\mathop{\sup}\limits_{B \left(x_0,R\right)}v^{\beta-2}\le\mathcal{C}\frac{1+\kappa R^2}{R^2}\bigg(\mathop{\sup}\limits_{B(x_0,R)}v\bigg)^{2-\beta}\mathop{\sup}\limits_{B \left(x_0,R\right)}v^{\beta-2}.
\end{align}
 Since
\begin{align*}
    \mathop{\sup}\limits_{B\left(x_0,R\right)}v\le l\mathop{\inf}\limits_{B\left(x_0,R\right)}v,
\end{align*}
we can see that
\begin{align}\label{4.7}
\begin{split}
\bigg(\mathop{\sup}\limits_{B(x_0,R)}v\bigg)^{2-\beta}\mathop{\sup}\limits_{B\left(x_0,R\right)}v^{\beta-2}=\bigg(\mathop{\sup}\limits_{B(x_0,R)}v\bigg)^{2-\beta} \bigg(\mathop{\inf}\limits_{B\left(x_0,R\right)}v\bigg)^{\beta-2}\le{l^{2-\beta}}.
\end{split}
\end{align}
Furthermore,
\begin{align}\label{4.8}
\mathop{\sup}\limits_{B\left(x_0,\frac{R}{2}\right)}\frac{|\nabla v|^2}{v^2}\le\mathop{\sup}\limits_{B\left(x_0,\frac{R}{2}\right)}\frac{|\nabla v|^2}{v^\beta}\mathop{\sup}\limits_{B \left(x_0,\frac{R}{2}\right)}v^{\beta-2}\le\mathop{\sup}\limits_{B\left(x_0,\frac{R}{2}\right)}\frac{|\nabla v|^2}{v^\beta}\mathop{\sup}\limits_{B \left(x_0,R\right)}v^{\beta-2}.
\end{align}
Now, substituting \eqref{4.7} and \eqref{4.8} into \eqref{4.6} leads to
\begin{align}
\mathop{\sup}\limits_{B\left(x_0,\frac{R}{2}\right)}\frac{|\nabla v|^2}{v^2}\le\mathcal{C}l^{2-\beta}\frac{1+\kappa R^2}{R^2}.
\end{align}
Thus we finish the proof of case 1.
\medskip

\textbf{Case 2}. $\beta=2.$

Combining \eqref{4.3} and \eqref{4.4} together, we have the following estimate
\begin{align}
\mathop{\sup}\limits_{B\left(x_0,\frac{R}{2}\right)}\frac{|\nabla v|^2}{v^2}\le\mathcal{C}\frac{1+\kappa R^2}{R^2}.
\end{align}
Therefore, we complete the proof of case 2.
\medskip

\textbf{Case 3}. $\beta>2.$

The proof of case 3 is similar to case 1. Hence, we omit its proof.

\end{proof}
\medskip

\noindent\textbf{Proof of Corollary \ref{corollary1.5*}:} The proof of this Corollary is obvious and we omit it.
\medskip

Now, we turn to considering \eqref{b=0} in $\mathbb{R}^n$ and give the proof of Corollary \ref{corollary1.6}. Here, we need to make use of Lemma \ref{lemma2.4} and Lemma \ref{lemma2.5} in the following proof.
\medskip

\noindent\textbf{Proof of Corollary \ref{corollary1.6}}
\begin{proof}
In order to use Lemma \ref{lemma2.4} and Lemma \ref{lemma2.5}, we need to consider the following four cases:


\textbf{Case 1.} $a>0$, $1<p<n$, $p\neq q$ and $q\in\left(p-1,~\frac{(p-1)n}{n-p}\right)$.

Let $v=a^{\frac{1}{p-q}}w$, then we have the following equality about $w$
\begin{align}
\Delta_pw+w^q=0,
\end{align}
by using \eqref{b=0}. By using Lemma \ref{lemma2.4}, we can know that for any $1<p<n$ and
$$q\in\left(p-1,~\frac{(p-1)n}{n-p}\right),$$
the following Harnack inequality
\begin{align}\label{4.12}
\mathop{\sup}\limits_{B_R}w(x)\le\mathcal{C}\mathop{\inf}\limits_{B_R}w(x)
\end{align}
holds true, where $\mathcal{C}=\mathcal{C}(n,p,q)$ is a positive constant. Combining Theorem \ref{corollary1.5} and \eqref{4.12} together, we can deduce that for any $1<p<n$ and
$$q\in\left(p-1,~\frac{(p-1)n}{n-p}\right),$$
the following estimate
\begin{align}\label{4.13}
\mathop{\sup}\limits_{B\left(x_0,\frac{R}{2}\right)}\frac{|\nabla w|^2}{w^2}\le\frac{\mathcal{C}}{R^2}
\end{align}
holds true, where $\mathcal{C}=\mathcal{C}(n,p,q)$ is a positive constant. Substituting $w=a^{\frac{1}{q-p}}v$ into \eqref{4.13}, we have the following estimate
\begin{align*}
\mathop{\sup}\limits_{B\left(x_0,\frac{R}{2}\right)}\frac{|\nabla v|^2}{v^2}\le\frac{\mathcal{C}}{R^2},
\end{align*}
where $\mathcal{C}=\mathcal{C}(n,p,q)$ is a positive constant. Thus we finish the proof of case 1.
\medskip

\textbf{Case 2.} $a>0$, $p\ge n$, $p\neq q$ and $q\in\left(0,+\infty\right)$.

The proof of case 2 is similar to case 1. Hence, we omit its proof.
\medskip

\textbf{Case 3.} $a\ge 1$ and $1<p=q<n<p^2$.

Since $p=q$ and $1<p<n<p^2$, we can know that $q\in\left(p-1,~\frac{(p-1)n}{n-p}\right)$. By using Lemma \ref{lemma2.4}, we know that for any $1<p=q<n<p^2$ the following Harnack inequality
\begin{align}\label{4.14}
\mathop{\sup}\limits_{B_R}v(x)\le\mathcal{C}\mathop{\inf}\limits_{B_R}v(x)
\end{align}
holds true, where $\mathcal{C}=\mathcal{C}(n,p,q,a)$ is a positive constant. By using Theorem \ref{corollary1.5} and \eqref{4.14}, we can achieve the following estimate
\begin{align*}
    \mathop{\sup}\limits_{B\left(x_0,\frac{R}{2}\right)}\frac{|\nabla v|^2}{v^2}\le\frac{\mathcal{C}}{R^2},
\end{align*}
where $\mathcal{C}=\mathcal{C}(n,p,q,a)$ is a positive constant. Thus we finish the proof of case 3.
\medskip

\textbf{Case 4.} $a\ge 1$ and $p=q\ge n$.

The proof of case 4 is similar to case 3. Therefore, we omit its proof.
\end{proof}

\noindent\textbf{Proof of Theorem \ref{theorem1.7}}
\begin{proof}
By using Corollary \ref{corollary1.4}, we can know that if $b>0$ and the constants $a$, $q$ and $p$ satisfy one of the following four conditions
\begin{itemize}
\item $a>0$, $p\ge n$ and $q\in \mathbb{R}$;
\item $a>0$, $1<p<n$ and $q<\Psi\left(\left[2,~\frac{2(n-1)}{n-p}\right)\right)$;
\item $a<0$, $p\ge n$ and $q>\Gamma([2,~+\infty))$;
\item $a<0$, $1<p<n$ and $q>\Gamma\left(\left[2,~\frac{2(n-1)}{n-p}\right)\right)$,
\end{itemize}
then the following estimate
\begin{align*}
\mathop{\sup}\limits_{B\left(x_0,\frac{R}{2}\right)}\frac{|\nabla v|^2}{(v+b)^\beta}\le\frac{\mathcal{C}}{R^2}
\end{align*}
holds true, where $\mathcal{C}=\mathcal{C}(n,p,q)$ is a positive constant. By letting $R\longrightarrow +\infty$, we can know that
\begin{align*}
\mathop{\sup}\limits_{M}|\nabla v|=0.
\end{align*}
Hence, $v$ is a constant. But, any positive constant is not a positive solution to \eqref{1} except for $a=0$.
\end{proof}

\noindent\textbf{The proof of Theorem \ref{theorem1.8}:} The proof of Theorem \ref{theorem1.8} is similar to Theorem \ref{theorem1.7}. Hence, we omit it.
\medskip
\medskip

\noindent {\it\bf{Acknowledgements}}: The authors are supported by National Natural Science Foundation of China (Grant No. 12431003).
\medskip
\medskip

\label{appendixB}


\begin{thebibliography}{99}

\bibitem{1}
\newblock Marie-Francoise Bidaut-V\'eron, Marta Garcia-Huidobro, and Laurent V\'eron.
\newblock \emph{ A priori estimates for elliptic
equations with reaction terms involving the function and its gradient.}
\newblock  Math. Ann., 378(1-2), 13--56, 2020.

\bibitem{2}
\newblock Marie-Francoise Bidaut-V\'eron, Marta Garcia-Huidobro, and Laurent V\'eron.
\newblock \emph{Radial solutions of scaling invariant nonlinear elliptic equations with mixed reaction terms.}
\newblock Discrete Contin. Dyn. Syst., 40(2), 933--982, 2020.

\bibitem{Ch-Y}
S. Y. Cheng and  S. T. Yau.
\newblock \emph{Differential equations on Riemannian manifolds and their geometric applications}.
\newblock Comm. Pure Appl. Math. 28 (1975), no. 3, 333--354.

\bibitem{CM1} T. H. Colding and W. P. Minicozzi II.
\newblock \emph{Harmonic functions on manifolds}.
\newblock Ann. of Math. 146, 725--747, 1997.

\bibitem{CM} T. H. Colding and W. P. Minicozzi II.
\newblock Liouville properties.
\newblock ICCM Not. 7, no. 1, 16--26, 2019.

\bibitem{16}
\newblock E. DiBenedetto.
\newblock \emph{$C^{1,\alpha}$ local regularity of weak solutions of degenerate elliptic equations.}
\newblock Nonlinear Anal. 7(8), 827--850, 1983.

\bibitem{5}
\newblock B. Gidas and J. Spruck.
\newblock \emph{Global and local behavior of positive solutions of nonlinear elliptic equations.}
\newblock  Comm. Pure Appl. Math., 34(4), 525--598, 1981.

\bibitem{HHW}Dong Han, Jie He and Youde Wang.
\newblock \emph{Gradient estimates for ${\Delta}_pu-|\nabla u|^q+b(x)|u|^{r-1}u=0$ on a complete Riemannian manifold and liouville type theorems}.
\newblock arXiv:2309.03510.

\bibitem{HWW} Jie He, Youde Wang and Guodong Wei.
\newblock \emph{Gradient estimate for solutions of the equation $\Delta_pv+au^q=0$ on a complete Riemannian manifold.}
\newblock Math. Z. 306(3), Paper No. 42, 19 pp, 2024.

\bibitem{HW} Pingliang Huang and Youde Wang.
\newblock \emph{Gradient estimates and {L}iouville theorems for {L}ichnerowicz equations}.
\newblock Pacific J. Math., 317(2):363--386, 2022.

\bibitem{KK} Jerry L Kazdan and Richard J Kramer.
\newblock\emph{Invariant criteria for existence of solutions to second-order quasilinear elliptic equations}.
\newblock Communications on Pure and Applied Mathematics, 31(5):619–645, 1978.

\bibitem{9} Brett Kotschwar and Lei Ni.
\newblock \emph{Local gradient estimates of p-harmonic functions, 1/H-flow, and an entropy formula.}
\newblock Ann. Sci. ´Ec. Norm. Sup´er. (4), 42(1):1–36, 2009.

\bibitem{4} Peter Li and Shing-Tung Yau.
\newblock \emph{ On the parabolic kernel of the Schr\"odinger operator.}
\newblock Acta Math., 156(3-4):153–201, 1986.

\bibitem{PWW1}Bo~Peng, Youde Wang, and Guodong Wei.
\newblock Gradient estimates for ${\Delta}u + au^{p+1}=0$ and liouville theorems.
\newblock {\em Mathematical Theory and Applications}, 43(1): 32--43, 3 2023.

\bibitem{14}
\newblock L. Saloff-Coste.
\newblock \emph{Uniformly elliptic operators on riemannian manifolds.}
\newblock Journal of Differential Geometry. 36, 2 (1992), 417-450.

\bibitem{James}
\newblock James Serrin and Henghui Zou.
\newblock \emph{Cauchy-Liouville and universal boundedness theorems for quasilinear elliptic equations and inequalities.}
\newblock Acta Math. 189(2002), 79-142.

\bibitem{17}
\newblock Peter Tolksdorf.
\newblock \emph{Regularity for a more general class of quasilinear elliptic equations.}
\newblock J. Differential Equations, 51(1):126-150, 1984.

\bibitem{18}
\newblock K. Uhlenbeck.
\newblock \emph{Regularity for a class of non-linear elliptic systems.}
\newblock Acta Math, 138(3-4):219-240, 1977.

\bibitem{WjW}
\newblock Jie Wang and Youde Wang.
\newblock\emph{Gradient estimates for $\Delta$u+a(x)ulogu+b(x)u=0 and its parabolic counterpart under integral Ricci curvature bounds.}
\newblock Comm. Anal. Geom. 32 (2024), no. 4, 923--975.

\bibitem{7}
\newblock Xiaodong Wang and Lei Zhang.
\newblock \emph{Local gradient estimate for p-harmonic functions on Riemannian manifolds.}
\newblock Comm. Anal. Geom., 19(4):759-771, 2011.

\bibitem{6}
\newblock Youde Wang and Guodong Wei.
\newblock \emph{On the nonexistence of positive solution to $\Delta u + au^{p+1} = 0$ on Riemannian manifolds.}
\newblock J. Differential Equations, 362:74-87, 2023.

\bibitem{Wang-Z-Z} Youde Wang, Aiqi Zhang, and Hongxing Zhao.
\newblock \emph{Gradient estimates and Liouville theorems for a class of elliptic equation on Riemannian manifolds}.
\newblock Comm. on Pure and Applied Analysis, 24(5): 840--867, 2025.

\bibitem{WZq} Youde Wang and Liqin Zhang.
\newblock {\em Gradient estimates for $\Delta_pu+\Delta_qu+a|u|^{s-1}u + b|u|^{l-1}u=0$ on a complete Riemannian manifold and applications}.
\newblock Internat. J. Math. 36 (2025), no. 2, Paper No. 2450070.

\bibitem{WZq1} Youde Wang and Liqin Zhang.
\newblock {\em Liouville theorems and gradient estimates for positive solutions to $\Delta_pu+\Delta_qu+h(u)=0$ on a complete manifold}.
\newblock preprint.

\bibitem{3}
\newblock Shing-Tung Yau.
\newblock \emph{Harmonic functions on complete Riemannian manifolds.}
\newblock Comm. Pure Appl. Math., 28:201-228, 1975.

\end{thebibliography}
\end{document}